\providecommand{\makenomenclature}{\makeglossary}
\DeclareRobustCommand{\lyxsout}[1]{\ifx\\#1\else\sout{#1}\fi}
\theoremstyle{plain}
\newtheorem{thm}{\protect\theoremname}[section]
\theoremstyle{remark}
\newtheorem{rem}[thm]{\protect\remarkname}
\theoremstyle{plain}
\newtheorem{lem}[thm]{\protect\lemmaname}
\theoremstyle{plain}
\newtheorem{cor}[thm]{\protect\corollaryname}
\theoremstyle{definition}
\newtheorem{defn}[thm]{\protect\definitionname}
\theoremstyle{plain}
\newtheorem{prop}[thm]{\protect\propositionname}
\theoremstyle{definition}
\newtheorem{example}[thm]{\protect\examplename}
\numberwithin{equation}{section}
\providecommand{\corollaryname}{Corollary}
\providecommand{\definitionname}{Definition}
\providecommand{\examplename}{Example}
\providecommand{\remarkname}{Remark}
\providecommand{\theoremname}{Theorem}
\newcommand{\vertiii}[1]{{\left\vert\kern-0.4ex\left\vert\kern-0.4ex\left\vert #1 
    \right\vert\kern-.4ex\right\vert\kern-0.4ex\right\vert}}
\providecommand{\propositionname}{Proposition}
\newcommand{\xyR}[1]{
  \xydef@\xymatrixrowsep@{#1}}
\newcommand{\xyC}[1]{
  \xydef@\xymatrixcolsep@{#1}}
\providecommand{\corollaryname}{Corollary}
\providecommand{\definitionname}{Definition}
\providecommand{\examplename}{Example}
\providecommand{\lemmaname}{Lemma}
\providecommand{\propositionname}{Proposition}
\providecommand{\remarkname}{Remark}
\providecommand{\theoremname}{Theorem}
\begin{document}
\title{A Biorthogonal Approach to the Infinite Dimensional Fractional Poisson
Measure}
\author{\textbf{Jerome B. Bendong}\\
 DMS, MSU-Iligan Institute of Technology,\\
 Tibanga, 9200 Iligan City, Philippines\\
 Email: jerome.bendong@g.msuiit.edu.ph\and \textbf{Sheila M. Menchavez}\\
 DMS, MSU-Iligan Institute of Technology,\\
 Tibanga, 9200 Iligan City, Philippines\\
 Email: sheila.menchavez@g.msuiit.edu.ph\and \textbf{Jos{\'e} Lu{\'\i}s
da Silva}\\
Faculdade de Ci{\^e}ncias Exatas e da Engenharia,\\
CIMA, Universidade da Madeira,\\
Campus Universit{\'a}rio da Penteada,\\
9020-105 Funchal, Portugal\\
 Email: joses@staff.uma.pt}
\date{\today}
\maketitle
\begin{abstract}
In this paper we use a biorthogonal approach to the analysis of the
infinite dimensional fractional Poisson measure $\pi_{\sigma}^{\beta}$,
$0<\beta\leq1$, on the dual of Schwartz test function space $\mathcal{D}'$.
The Hilbert space $L^{2}(\pi_{\sigma}^{\beta})$ of complex-valued
functions is described in terms of a system of generalized Appell
polynomials $\mathbb{P}^{\sigma,\beta,\alpha}$ associated to the
measure $\pi_{\sigma}^{\beta}$. The kernels $C_{n}^{\sigma,\beta}(\cdot)$,
$n\in\mathbb{N}_{0}$, of the monomials may be expressed in terms
of the Stirling operators of the first and second kind as well as
the falling factorials in infinite dimensions. Associated to the system
$\mathbb{P}^{\sigma,\beta,\alpha}$, there is a generalized dual Appell
system $\mathbb{Q}^{\sigma,\beta,\alpha}$ that is biorthogonal to
$\mathbb{P}^{\sigma,\beta,\alpha}$. The test and generalized function
spaces associated to the measure $\pi_{\sigma}^{\beta}$ are completely
characterized using an integral transform as entire functions. \\
 \\
 \textbf{Keywords}: fractional Poisson measure, generalized Appell
system, Wick exponential, test functions, generalized functions, Stirling
operators, $S$-transform.
\end{abstract}
\tableofcontents{}

\section{Introduction}

\label{sec:Introduction}In this paper we develop a biorthogonal approach
to the analysis of the infinite dimensional fractional Poisson measure
(fPm) on the configuration space $\Gamma$ or over $\mathcal{D}'$
(the dual of the Schwartz test function space $\mathcal{D}$). As
a special case of a non-Gaussian measure (for which this biorthogonal
approach was developed in \cite{ADKS96,KSWY95,KdSS98}) the fPm revealed
an interesting connection with the Stirling operators and falling
factorials in the context of infinite dimensional analysis introduced
recently in \cite{Finkelshtein2022}.

To describe our results more precisely, let us recall that there are
different ways to introduce a total set of orthogonal polynomials
in the Hilbert space of square integrable functions with respect to
(wrt) a probability measure. For example, applying the Gram-Schmidt
method to an independent sequence of functions or using generating
functions. In the case at hand, that is, the fPm $\pi_{\sigma}^{\beta}$
($0<\beta\le1$, $\sigma$ a non-degenerate and non-atomic measure
in $\mathbb{R}^{d}$), we have chosen the generating function procedure
because the Gram-Schmidt method is not practical. In addition, the
generating function is picked in a way such that at $\beta=1$, we
recover the classical Charlier polynomials, that is, $\pi_{\sigma}^{1}$
coincides with the standard Poisson measure $\pi_{\sigma}$ on $\Gamma$,
see \cite{AKR97a} for more details. In explicit, given the map
\[
\alpha:\mathcal{D}_{\mathbb{C}}\longrightarrow\mathcal{D}_{\mathbb{C}},\;\varphi\mapsto\alpha(\varphi)(x):=\log(1+\varphi(x)),\quad x\in\mathbb{R}^{d},
\]
we define the modified Wick exponential 
\[
\mathrm{e}_{\pi_{\sigma}^{\beta}}(\alpha(\varphi);w):=\frac{\exp(\langle w,\alpha(\varphi)\rangle)}{l_{\pi_{\sigma}^{\beta}}(\alpha(\varphi))}=\sum_{n=0}^{\infty}\frac{1}{n!}\langle C_{n}^{\sigma,\beta}(w),\varphi^{\otimes n}\rangle,\quad w\in\mathcal{D}'_{\mathbb{C}},
\]
where $\varphi$ is properly chosen from a neighborhood of zero in
$\mathcal{D}_{\mathbb{C}}$. The monomials $\langle C_{n}^{\sigma,\beta}(w),\varphi^{\otimes n}\rangle$,
$n\in\mathbb{N}_{0}$ generates a system of polynomials $\mathbb{P}^{\sigma,\beta,\alpha}$
which forms a total set in the space $L^{2}(\pi_{\sigma}^{\beta})$
of square $\pi_{\sigma}^{\beta}$-integrable complex functions. The
kernels $C_{n}^{\sigma,\beta}(\cdot)$, $n\in\mathbb{N}_{0}$, possess
certain remarkable properties involving the Stirling operators of
the first and second kind as well as the falling factorials $(w)_{n}$,
$w\in\mathcal{D}'_{\mathbb{C}}$ introduced in \cite{Finkelshtein2022}.
We refer to Proposition~\ref{prop:generalized-appell-polynomials-inf-dim}
and Appendix \ref{sec:Stirling-Operators} for more details and results.
Other choice of generating functions like $\mathrm{e}_{\pi_{\sigma}^{\beta}}(\varphi;\cdot)$
is also possible (see the beginning of Section \ref{sec:Appell-System}),
but at $\beta=1$, the corresponding system of polynomials do not
coincide with the classical Charlier polynomials. Thus, our natural
choice goes to the modified Wick exponential generating function $\mathrm{e}_{\pi_{\sigma}^{\beta}}(\alpha(\varphi);\cdot)$.

On the other hand, the construction of the generalized dual Appell
system $\mathbb{Q}^{\sigma,\beta,\alpha}$ turns out to be very appealing
since it involves a differential operator of infinite order on the
space of polynomials $\mathcal{P}(\mathcal{D}')$ over $\mathcal{D}'$
and the adjoints of the Stirling operators. This careful choice of
the system $\mathbb{Q}^{\sigma,\beta,\alpha}$ leads us to the so-called
biorthogonal property between the two systems $\mathbb{P}^{\sigma,\beta,\alpha}$
and $\mathbb{Q}^{\sigma,\beta,\alpha}$, see Theorem \ref{thm:biorthogonal-property-inf-dim}.

The generalized Appell system $\mathbb{A}^{\sigma,\beta,\alpha}:=(\mathbb{P}^{\sigma,\beta,\alpha},\mathbb{Q}^{\sigma,\beta,\alpha})$
is used to introduce a family of test function spaces $(\mathcal{N})_{\pi_{\sigma}^{\beta}}^{\kappa}$,
$0\le\kappa\le1$, which are nuclear spaces and continuously embedded
in $L^{2}(\pi_{\sigma}^{\beta})$. The dual space of $(\mathcal{N})_{\pi_{\sigma}^{\beta}}^{\kappa}$
is given by the general duality theory as $(\mathcal{N})_{\pi_{\sigma}^{\beta}}^{-\kappa}$.
In this way, we obtain the chain of continuous embeddings
\[
(\mathcal{N})_{\pi_{\sigma}^{\beta}}^{\kappa}\subset L^{2}(\pi_{\sigma}^{\beta})\subset(\mathcal{N})_{\pi_{\sigma}^{\beta}}^{-\kappa}.
\]
A typical example of a test function is the modified Wick exponential
$\mathrm{e}_{\pi_{\sigma}^{\beta}}(\alpha(\varphi);\cdot)\in(\mathcal{N})_{\pi_{\sigma}^{\beta}}^{\kappa}$
(see Example \ref{exa:normalized-exp-as-test-function}) given as
a convergent series in terms of the system $\mathbb{P}^{\sigma,\beta,\alpha}$
while a particular element in $(\mathcal{N})_{\pi_{\sigma}^{\beta}}^{-1}$
is given by the generalized Radon-Nikodym derivative $\rho_{\pi_{\sigma}^{\beta}}^{\alpha}(w,\cdot)$,
$w\in\mathcal{N}'_{\mathbb{C}}$. Moreover, the generalized function
$\rho_{\pi_{\sigma}^{\beta}}^{\alpha}(w,\cdot)$ plays the role of
the generating function of the system $\mathbb{Q}^{\sigma,\beta,\alpha}$,
that is,
\[
\rho_{\pi_{\sigma}^{\beta}}^{\alpha}(w,\cdot)=\sum_{k=0}^{\infty}\frac{1}{k!}Q_{k}^{\text{\ensuremath{\pi_{\sigma}^{\beta},\alpha}}}((-w)_{k}).
\]
The spaces $(\mathcal{N})_{\pi_{\sigma}^{\beta}}^{\pm\kappa}$ may
be characterized in terms of an integral transform, called the $S_{\pi_{\sigma}^{\beta}}$-transform.
It turns out that all these spaces $(\mathcal{N})_{\pi_{\sigma}^{\beta}}^{\pm\kappa}$,
$0\le\kappa\le1$, are universal in the sense that the $S_{\pi_{\sigma}^{\beta}}$-transform
of their elements are entire functions (for $0\le\kappa<1)$ or holomorphic
functions ($\kappa=1$) and independent of the measure $\pi_{\sigma}^{\beta}$,
see Theorem~\ref{thm:characterizations}. This feature is well known
in non-Gaussian analysis.

The paper is organized as follows. In Section \ref{sec:Nuclear-spaces},
we recall some known concepts of nuclear spaces and their tensor products.
As a motivation to the generalization of fPm to infinite dimensions,
we discuss its finite dimensional version in Section \ref{sec:FPm-finite-dim}.
We show that the monic polynomials $C_{n}^{\beta}(x)$, $n\in\mathbb{N}_{0}$,
obtained using Gram-Schmidt orthogonalization process to the monomials
$x^{n}$, $n\in\mathbb{\mathbb{N}}_{0}$ are orthogonal in $L^{2}(\pi_{\vec{\lambda},\beta}^{2})$
($\pi_{\vec{\lambda},\beta}^{2}$ is the Poisson measure in 2-dimensions)
if, and only if, $\beta=1$. In Section \ref{sec:Inf-Dim-fPm}, we
define the fPm $\pi_{\sigma}^{\beta}$ in infinite dimensions as a
probability measure on $(\mathcal{D}',\mathcal{C}_{\sigma}(\mathcal{D}'))$,
where $\mathcal{C}_{\sigma}(\mathcal{D}')$ is the $\sigma$-algebra
generated by the cylinder sets. We also discuss the concept of configuration
space $\Gamma$ and then using the Kolmogorov extension theorem we
define a unique measure $\pi_{\sigma}^{\beta}$ on the configuration
space $(\Gamma,\mathcal{B}(\Gamma))$ whose characteristic function
coincides with that of $\pi_{\sigma}^{\beta}$ on the distribution
space $\mathcal{D}'$. In Section \ref{sec:Appell-System}, we introduce
the generalized Appell system associated with the fPm $\pi_{\sigma}^{\beta}$.
This includes the system of generalized Appell polynomials and the
dual Appell system which are biorthogonal with respect to the fPm
$\pi_{\sigma}^{\beta}$. Finally in Section \ref{sec:test-and-generalized-function-spaces},
we construct the test and generalized function spaces associated to
the fPm $\pi_{\sigma}^{\beta}$ and provide some properties as well
as its characterization theorems.

For completeness, in the Appendices \ref{sec:Kolmogorov-extension-theorem-on-config-space}--\ref{sec:alternative-proof-biorthogonal-property}
we provide certain concepts and results already known in the literature,
particularly, the Kolmogorov extension theorem on the configuration
space and Stirling operators in infinite dimensions.

\section{Tensor Powers of Nuclear Spaces}

\label{sec:Nuclear-spaces}We first consider nuclear Fr\'{e}chet
spaces (i.e., a complete metrizable locally convex space) that may
be characterized in terms of projective limits of a countable number
of Hilbert spaces, see e.g., \cite{BK88}, \cite{BSU96}, \cite{GV68},
\cite{HKPS93} and \cite{O94} for more details and proofs.

Let $\mathcal{H}$ be a real separable Hilbert space with inner product
$(\cdot,\cdot)$ and corresponding norm $|\cdot|$. Consider a family
of real separable Hilbert space $\mathcal{H}_{p}$, $p\in\mathbb{N}$
with Hilbert norm $|\cdot|_{p}$ such that the space $\bigcap_{p\in\mathbb{N}}\mathcal{H}_{p}$
is dense in each $\mathcal{H}_{p}$, and 
\[
\dots\subset\mathcal{H}_{p}\subset\dots\subset\mathcal{H}_{1}\subset\mathcal{H}
\]
with the corresponding system of norms being ordered, i.e.,

\[
|\cdot|\leq|\cdot|_{1}\leq\dots\leq|\cdot|_{p}\leq\dots\quad p\in\mathbb{N}.
\]

Now we assume that the space $\mathcal{N}=\bigcap_{p\in\mathbb{N}}\mathcal{H}_{p}$
is nuclear (i.e., for each $p\in\mathbb{N}$ there is a $q>p$ such
that the canonical embedding $\mathcal{H}_{q}\hookrightarrow\mathcal{H}_{p}$
is of Hilbert-Schmidt class) and on $\mathcal{N}$ we fix the \emph{projective
limit topology}, i.e., the coarsest topology on $\mathcal{N}$ with
respect to which each canonical embedding $\mathcal{N}\hookrightarrow\mathcal{H}_{p}$,
$p\in\mathbb{N}$, is continuous.  With respect to this topology,
$\mathcal{N}$ is a Fr\'{e}chet space and we use the notation
\[
\mathcal{N}=\underset{p\in\mathbb{N}}{\mathrm{pr\,lim}}\mathcal{H}_{p}
\]
to denote the space $\mathcal{N}$ endowed with the corresponding
projective limit topology. Such a topological space is called a \emph{projective
limit} or a \emph{countable limit of the family} $(\mathcal{H}_{p})_{p\in\mathbb{N}}$.

Let us denote by $\mathcal{H}_{-p}$, $p\in\mathbb{N}$, the dual
of $\mathcal{H}_{p}$ with respect to the space $\mathcal{H}$, with
the corresponding Hilbert norm $|\cdot|_{-p}$. By the general duality
theory, the dual space $\mathcal{N}'$ of $\mathcal{N}$ with respect
to $\mathcal{H}$ can then be written as
\[
\mathcal{N}':=\bigcup_{p\in\mathbb{N}}\mathcal{H}_{-p}
\]
with the \emph{inductive limit topology}, i.e., the finest topology
on $\mathcal{N}'$ with respect to which all the embeddings $\mathcal{H}_{-p}\hookrightarrow\mathcal{N}'$
are continuous. This topological space is denoted by
\[
\mathcal{N}'=\underset{p\in\mathbb{N}}{\mathrm{ind\,lim}}\mathcal{H}_{-p}
\]
and is called an \emph{inductive limit of the family} $(\mathcal{H}_{-p})_{p\in\mathbb{N}}$.
In this way we have obtained the chain of spaces 
\[
\mathcal{N}\mathcal{\subset H}\subset\mathcal{N}'
\]
called a \emph{nuclear triple} or \emph{Gelfand triple}. The dual
pairing $\langle\cdot,\cdot\rangle$ between $\mathcal{N}$ and $\mathcal{N}'$
is then realized as an extension of the inner product $(\cdot,\cdot)$
on $\mathcal{H}$, i.e., 
\[
\langle g,\xi\rangle=(g,\xi),\quad g\in\mathcal{H},\xi\in\mathcal{N}.
\]
The tensor product of the Hilbert spaces $\mathcal{H}_{p}$, $p\in\mathbb{N}$,
is denoted by $\mathcal{H}_{p}^{\otimes n}$. We keep the notation
$|\cdot|_{p}$ for the Hilbert norm on this space. The subspace of
$\mathcal{H}_{p}^{\otimes n}$ of symmetric elements is denoted by
$\mathcal{H}_{p}^{\hat{\otimes}n}$. The \emph{$n$}-th tensor power
$\mathcal{N}^{\otimes n}$ of $\mathcal{N}$ and the \emph{$n$}-th
symmetric tensor power $\mathcal{N}^{\hat{\otimes}n}$ of $\mathcal{N}$
are the nuclear Fr\'{e}chet spaces given by 
\[
\mathcal{N}^{\otimes n}:=\underset{p\in\mathbb{N}}{\mathrm{pr\,lim}}\mathcal{H}_{p}^{\otimes n}\mathrm{\quad and}\quad\mathcal{N}^{\hat{\otimes}n}:=\underset{p\in\mathbb{N}}{\mathrm{pr\,lim}}\mathcal{H}_{p}^{\hat{\otimes}n}.
\]
Furthermore, if $\mathcal{H}_{-p}^{\otimes n}$ (resp., $\mathcal{H}_{-p}^{\hat{\otimes}n}$
) denotes the dual space of $\mathcal{H}_{p}^{\otimes n}$ (resp.,
$\mathcal{H}_{p}^{\hat{\otimes}n}$) with respect to $\mathcal{H}^{\otimes n}$,
then the dual space $(\mathcal{N}{}^{\otimes n})'$ of $\mathcal{N}^{\otimes n}$
with respect to $\mathcal{H}^{\otimes n}$ and the dual space $(\mathcal{N}{}^{\hat{\otimes}n})'$
of $\mathcal{N}^{\hat{\otimes}n}$ with respect to $\mathcal{H}^{\otimes n}$
can be written as 
\[
(\mathcal{N}{}^{\otimes n})'=\underset{p\in\mathbb{N}}{\mathrm{ind\,lim}}\mathcal{H}_{-p}^{\otimes n}\mathrm{\quad and}\quad(\mathcal{N}{}^{\hat{\otimes}n})'=\underset{p\in\mathbb{N}}{\mathrm{ind\,lim}}\mathcal{H}_{-p}^{\hat{\otimes}n},
\]
respectively. As before we use the notation $|\cdot|_{-p}$ for the
norm on $\mathcal{H}_{-p}^{\otimes n}$, $p\in\mathbb{N}$, and $\langle\cdot,\cdot\rangle$
for the dual pairing between $(\mathcal{N}{}^{\otimes n})'$ and $\mathcal{N}^{\otimes n}$.
Thus we have defined the nuclear triples 
\[
\mathcal{N}^{\otimes n}\subset\mathcal{H}^{\otimes n}\subset(\mathcal{N}{}^{\otimes n})'\quad\mathrm{and}\quad\mathcal{N}^{\hat{\otimes}n}\subset\mathcal{H}^{\hat{\otimes}n}\subset(\mathcal{N}{}^{\hat{\otimes}n})'.
\]

To all the real spaces in this section, we may also consider their
complexifications which will be distinguished by a subscript $\mathbb{C}$,
i.e., the complexification of $\text{\ensuremath{\mathcal{H}}}$ is
$\text{\ensuremath{\mathcal{H}}}_{\mathbb{C}}$ and so on. This means
that for $h\in\text{\ensuremath{\mathcal{H}}}_{\mathbb{C}}$, we have
$h=h_{1}+ih_{2}$ where $h_{1},h_{2}\in\text{\ensuremath{\mathcal{H}}}$.

Let us now introduce spaces of entire functions which will be used
later in the characterization theorems in Section \ref{sec:test-and-generalized-function-spaces}.
Let $\mathcal{E}_{2^{-l}}^{k}(\mathcal{H}_{-p,\mathbb{C}})$ denote
the set of all entire functions on $\mathcal{H}_{-p,\mathbb{C}}$
of growth $k\in[1,2]$ and type $2^{-l}$, $p,l\in\mathbb{Z}$. This
is a linear space with norm 
\[
n_{p,l,k}(\varphi)=\sup_{w\in\mathcal{H}_{-p,\mathbb{C}}}|\varphi(w)|\exp(-2^{-l}|z|_{-p}^{k}),\;\varphi\in\mathcal{E}_{2^{-l}}^{k}(\mathcal{H}_{-p,\mathbb{C}}).
\]
The space of entire functions on $\mathcal{N}'_{\mathbb{C}}$ of growth
$k$ and minimal type is naturally introduced by
\[
\mathcal{E}_{\min}^{k}(\mathcal{N}'_{\mathbb{C}}):=\underset{p,l\in\mathbb{N}}{\mathrm{pr\,lim}}\mathcal{E}_{2^{-l}}^{k}(\mathcal{H}_{-p,\mathbb{C}}),
\]
see e.g., \cite{Ko80a,BK88}. We will also need the space of entire
functions on $\mathcal{N}_{\mathbb{C}}$ of growth $k$ and finite
type given by 
\[
\mathcal{E}_{\max}^{k}(\mathcal{N}{}_{\mathbb{C}}):=\underset{p,l\in\mathbb{N}}{\mathrm{ind\,lim}}\mathcal{E}_{2^{l}}^{k}(\mathcal{H}_{p,\mathbb{C}}).
\]

\section{Finite Dimensional Fractional Poisson Measure}

\label{sec:FPm-finite-dim}In this section we discuss the finite dimensional
version of the fractional Poisson measure as a motivation to its generalization
to infinite dimensions. The one dimensional version of the fractional
Poisson analysis was studied in \cite{Bendong2022}.

At first we introduce the Mittag-Leffler function $E_{\beta}$ with
parameter $\beta\in(0,1]$. The Mittag-Leffler function is an entire
function defined on the complex plane by the power series 
\begin{equation}
E_{\beta}(z):=\sum_{n=0}^{\infty}\frac{z^{n}}{\Gamma(\beta n+1)},\quad z\in\mathbb{C}.\label{eq:ML-function}
\end{equation}
The Mittag-Leffler function plays the same role for the fPm as the
exponential function plays for Poisson measure. Note that for $\beta=1$
we have $E_{1}(z)=\mathrm{e}^{z}$.

For any $0<\beta\le1$, the fPm $\pi_{\lambda,\beta}$ on $\mathbb{N}_{0}$
(or $\mathbb{R})$ with rate $\lambda>0$ is defined for any $B\in\mathscr{P}(\mathbb{N}_{0})$
by
\[
\pi_{\lambda,\beta}(B):=\sum_{k\in B}\frac{\lambda^{k}}{k!}E_{\beta}^{(k)}(-\lambda),
\]
where $E_{\beta}^{(k)}(z):=\frac{d^{k}}{dz^{k}}E_{\beta}(z)$ is the
$k$-th derivative of the Mittag-Leffler $E_{\beta}$ function. In
particular, if $B=\{k\}\in\mathscr{P}(\mathbb{N}_{0})$, $k\in\mathbb{N}_{0}$,
we obtain
\[
\pi_{\lambda,\beta}(\{k\}):=\frac{\lambda^{k}}{k!}E_{\beta}^{(k)}(-\lambda).
\]
The Laplace transform of the measure $\pi_{\lambda,\beta}$ is given
for any $z\in\mathbb{C}$ by 
\begin{equation}
l_{\pi_{\lambda,\beta}}(z)=\int_{\mathbb{R}}\mathrm{e}^{zx}\,\mathrm{d}\pi_{\lambda,\beta}(x)=\sum_{k=0}^{\infty}\frac{\big(\mathrm{e}^{z}\lambda\big)^{k}}{k!}E_{\beta}^{(k)}\big(-\lambda\big)=E_{\beta}\big(\lambda(\mathrm{e}^{z}-1)\big).\label{eq:LT-fPm}
\end{equation}

\begin{rem}
The measure $\pi_{\lambda t^{\beta},\beta}$ corresponds to the marginal
distribution of the fractional Poisson process $N_{\lambda,\beta}=(N_{\lambda,\beta}(t))_{t\ge0}$
with parameter $\lambda t^{\beta}>0$ defined on a probability space
$(\Omega,\mathcal{F},P)$. Thus, we obtain
\[
\pi_{\lambda t^{\beta},\beta}(\{k\})=P(N_{\lambda,\beta}(t)=k)=\frac{(\lambda t^{\beta})^{k}}{k!}E_{\beta}^{(k)}(-\lambda t^{\beta}),\quad k\in\mathbb{N}_{0}.
\]
\end{rem}

\begin{rem}
The fractional Poisson process $N_{\lambda,\beta}$ was proposed by
O.\ N.\ Repin and A.\ I.\ Saichev \cite{Repin-Saichev00}. Since
then, it was studied by many authors see for example \cite{L03,MGS04,Mainardi-Gorenflo-Vivoli-05,Gorenflo2015,Uchaikin2008,Beghin:2009fi,Politi-Kaizoji-2011,Meerschaert2011,Biard2014}
and references therein.
\end{rem}

A remarkable property of the fPm is that $\pi_{\lambda,\beta}$ is
given as a mixture of Poisson measures with respect to a probability
measure $\nu_{\beta}$ on $\mathbb{R}_{+}:=[0,\infty)$. That probability
measure $\nu_{\beta}$ is absolutely continuous with respect to the
Lebesgue measure on $\mathbb{R}_{+}$ with a probability density $W_{-\beta,1-\beta}$,
that is, the Wright function. The Laplace transform of the measure
$\nu_{\beta}$ (or its density $W_{-\beta,1-\beta}$) is given by
\begin{equation}
\int_{0}^{\infty}\mathrm{e}^{-\tau z}\,\mathrm{d}\nu_{\beta}(\tau)=\int_{0}^{\infty}\mathrm{e}^{-z\tau}W_{-\beta,1-\beta}(\tau)\,\mathrm{d}\tau=E_{\beta}(-z),\label{eq:monotonicity-Mittag}
\end{equation}
for any $z\in\mathbb{C}$ such that $\mathrm{Re}(z)\geq0$, see \cite[Cor.~A.5]{GJRS14}.
Equation \eqref{eq:monotonicity-Mittag} is called the complete monotonicity
property of the Mittag-Leffler function, see \cite{Pollard48}. More
precisely, we have the following lemma.
\begin{lem}
\label{lem:mixture-1d}For $0<\beta\leq1$, the fPm $\pi_{\lambda,\beta}$
is an integral (or mixture) of Poisson measure $\pi_{\lambda}$ with
respect to the probability measure $\nu_{\beta}$, i.e.,
\begin{equation}
\pi_{\lambda,\beta}=\int_{0}^{\infty}\pi_{\lambda\tau}\,\mathrm{d\nu_{\beta}(\tau),}\quad\forall\lambda>0.\label{eq:fPm-mixture}
\end{equation}
\end{lem}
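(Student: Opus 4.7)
The plan is to reduce the measure identity to a pointwise identity on singletons $\{k\}$, $k\in\mathbb{N}_0$. Both sides of \eqref{eq:fPm-mixture} are finite (in fact probability) measures supported on $\mathbb{N}_0$: the left-hand side by the definition of $\pi_{\lambda,\beta}$, and the right-hand side because, by Fubini and the fact that each $\pi_{\lambda\tau}$ is a probability measure concentrated on $\mathbb{N}_0$, the mixture is also a probability measure on $\mathbb{N}_0$. Hence it suffices to check the equation $\pi_{\lambda,\beta}(\{k\})=\int_0^\infty \pi_{\lambda\tau}(\{k\})\,d\nu_\beta(\tau)$ for every $k$.

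Using the classical Poisson formula $\pi_{\lambda\tau}(\{k\})=\frac{(\lambda\tau)^k}{k!}e^{-\lambda\tau}$ and the definition of $\nu_\beta$ through its density $W_{-\beta,1-\beta}$, the right-hand side becomes
\[
\frac{\lambda^k}{k!}\int_0^\infty \tau^k e^{-\lambda\tau} W_{-\beta,1-\beta}(\tau)\,d\tau.
\]
Comparing with the target $\pi_{\lambda,\beta}(\{k\})=\frac{\lambda^k}{k!}E_\beta^{(k)}(-\lambda)$, the lemma reduces to the moment identity $\int_0^\infty \tau^k e^{-\lambda\tau} W_{-\beta,1-\beta}(\tau)\,d\tau = E_\beta^{(k)}(-\lambda)$. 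I would obtain this by differentiating the Laplace identity \eqref{eq:monotonicity-Mittag} $k$ times under the integral sign with respect to $\lambda$. The $k$-fold differentiation produces $(-1)^k$ on each side (from $\partial_\lambda^k e^{-\lambda\tau}=(-\tau)^k e^{-\lambda\tau}$ on the left, and from the chain rule applied to $E_\beta(-\lambda)$ on the right), and these factors cancel, yielding exactly the claimed moment formula.

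The main technical point is justifying differentiation under the integral sign: this requires a dominating bound of the form $\tau^k e^{-\lambda\tau/2}W_{-\beta,1-\beta}(\tau) \in L^1(\mathbb{R}_+)$ uniformly for $\lambda$ in compact subsets of $(0,\infty)$. This follows from the known exponential-type asymptotics of the Wright function $W_{-\beta,1-\beta}$ at infinity (which dominates any polynomial factor $\tau^k$), combined with its integrability near the origin; I would cite the relevant estimate from \cite{GJRS14} rather than reprove it. Once this bound is in place, dominated convergence legitimizes the differentiation and finishes the argument.

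As a sanity check, one can alternatively compare Laplace transforms directly: by Fubini, $\int_{\mathbb{R}} e^{zx}\,d(\int_0^\infty \pi_{\lambda\tau}\,d\nu_\beta)(x) = \int_0^\infty \exp(\lambda\tau(e^z-1))\,d\nu_\beta(\tau)$, and for $z$ with $\mathrm{Re}(e^z)\le 1$ this equals $E_\beta(\lambda(e^z-1))$ by \eqref{eq:monotonicity-Mittag}, matching $l_{\pi_{\lambda,\beta}}(z)$ from \eqref{eq:LT-fPm}. This route is more economical but requires care about the admissible domain of $z$; the pointwise-on-singletons approach above is cleaner and self-contained.
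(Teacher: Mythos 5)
Your argument is correct, but it takes a genuinely different route from the paper. The paper's proof is the one you relegate to a ``sanity check'': it computes the Laplace transform of the mixture $\mu:=\int_0^\infty\pi_{\lambda\tau}\,\mathrm{d}\nu_\beta(\tau)$ via Fubini, identifies it with $E_\beta(\lambda(\mathrm{e}^z-1))=l_{\pi_{\lambda,\beta}}(z)$ using \eqref{eq:monotonicity-Mittag}, and concludes by uniqueness of the Laplace transform (treating $\beta=1$ separately since $\nu_1=\delta_1$). Your main argument instead verifies the identity atom by atom on $\mathbb{N}_0$, reducing it to the moment formula $\int_0^\infty\tau^k\mathrm{e}^{-\lambda\tau}W_{-\beta,1-\beta}(\tau)\,\mathrm{d}\tau=E_\beta^{(k)}(-\lambda)$, obtained by differentiating \eqref{eq:monotonicity-Mittag} $k$ times; the sign bookkeeping is right, and the differentiation under the integral is legitimate (in fact you do not even need the asymptotics of the Wright function: $\tau^k\mathrm{e}^{-\lambda\tau/2}$ is bounded on $\mathbb{R}_+$ uniformly for $\lambda$ in compacts of $(0,\infty)$, so $W_{-\beta,1-\beta}\in L^1$ already gives the dominating function; alternatively, it is the standard fact that a Laplace transform is analytic and differentiable under the integral in the interior of its convergence half-plane). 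What your route buys is self-containedness --- no appeal to injectivity of the Laplace transform on measures --- at the cost of the differentiation lemma; the paper's route is shorter and extends verbatim to the infinite-dimensional analogue in Section 4, which is presumably why the authors chose it. One small point: your main argument uses the density $W_{-\beta,1-\beta}$ throughout, which only exists for $0<\beta<1$; the case $\beta=1$ ($\nu_1=\delta_1$) should be noted separately, as the paper does, though it is trivial.
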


\begin{proof}
For $\beta=1$, we have $\nu_{1}=\delta_{1}$, the Dirac measure at
1, and the result is clear. For $0<\beta<1$, we denote the right
hand side of \eqref{eq:fPm-mixture} by $\mathrm{\mu}:=\int_{0}^{\infty}\mathrm{\pi}_{\lambda\tau}W_{-\beta,1-\beta}(\tau)\,\mathrm{d}\tau$.
We compute the Laplace transform of $\mu$ and use Fubini's theorem
to obtain 
\begin{align*}
\int_{0}^{\infty}\mathrm{e}^{zx}\,\mathrm{d}\mu(x) & =\int_{0}^{\infty}\mathrm{e}^{zx}\int_{0}^{\infty}\mathrm{d\pi}_{\lambda\tau}(x)W_{-\beta,1-\beta}(\tau)\,\mathrm{d}\tau\\
 & =\int_{0}^{\infty}\left(\int_{0}^{\infty}\mathrm{e}^{zx}\mathrm{d\pi}_{\lambda\tau}(x)\right)W_{-\beta,1-\beta}(\tau)\,\mathrm{d}\tau\\
 & =\int_{0}^{\infty}\mathrm{e}^{\tau\lambda(\mathrm{e}^{z}-1)}W_{-\beta,1-\beta}(\tau)\,\mathrm{d}\tau\\
 & =E_{\beta}(\lambda(e^{z}-1)).
\end{align*}
Thus, we conclude that the Laplace transforms of $\mu$ and $\pi_{\lambda,\beta}$
(cf.~\eqref{eq:LT-fPm}) coincide. The result follows by the uniqueness
of the Laplace transform.
\end{proof}
\begin{thm}[Moments of $\pi_{\lambda,\beta},$ cf.~\cite{L09}]
\label{thm:fPmm}The fPm $\pi_{\lambda,\beta}$ has moments of all
order. More precisely, the $n$-th moment of the measure $\pi_{\lambda,\beta}$
is given by 
\begin{equation}
m_{\lambda,\beta}(n):=\int_{\mathbb{R}}x^{n}\,\mathrm{d}\pi_{\lambda,\beta}(x)=\sum_{m=0}^{n}\frac{m!}{\Gamma(m\beta+1)}S(n,m)\lambda^{m},\label{cnn}
\end{equation}
where $S(n,m)$ is the Stirling number of the second kind.
\end{thm}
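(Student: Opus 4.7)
The plan is to exploit the mixture representation of the fractional Poisson measure established in Lemma~\ref{lem:mixture-1d} and reduce the computation to two classical ingredients: the moments of an ordinary Poisson measure (which are Touchard/Bell-type sums in the Stirling numbers of the second kind) and the moments of the mixing measure $\nu_\beta$ (which are expressible through $\Gamma(m\beta+1)$ via its Mittag-Leffler Laplace transform).

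First I would verify that $\pi_{\lambda,\beta}$ has moments of all orders. Since its Laplace transform \eqref{eq:LT-fPm} equals $E_\beta(\lambda(\mathrm{e}^z-1))$, an entire function of $z$, all exponential moments are finite, hence so are all polynomial moments; the same remark applies to the Poisson measure $\pi_{\lambda\tau}$ and to the mixing measure $\nu_\beta$ (whose Laplace transform $E_\beta(-z)$ is also entire). With integrability in hand, I would apply Lemma~\ref{lem:mixture-1d} together with Fubini's theorem to write
\[
m_{\lambda,\beta}(n) \;=\; \int_{0}^{\infty}\!\!\left(\int_{\mathbb{R}} x^{n}\,\mathrm{d}\pi_{\lambda\tau}(x)\right)\mathrm{d}\nu_{\beta}(\tau).
\]

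Second, I would compute the inner integral, i.e.\ the $n$-th moment of the Poisson measure $\pi_{\lambda\tau}$. The cleanest route is to expand the monomial in falling factorials $x^n=\sum_{m=0}^{n}S(n,m)(x)_m$ and recall the elementary identity $\int_{\mathbb{R}}(x)_m\,\mathrm{d}\pi_\mu(x)=\mu^{m}$ (obtained directly from the generating function $\sum_k (x)_m\mu^{k-m}/(k-m)!\,\mathrm{e}^{-\mu}/k!$, or equivalently by differentiating $l_{\pi_\mu}(z)=\mathrm{e}^{\mu(\mathrm{e}^z-1)}$ at $z=0$ via Faà di Bruno). With $\mu=\lambda\tau$ this gives $\int x^n\,\mathrm{d}\pi_{\lambda\tau}(x)=\sum_{m=0}^n S(n,m)(\lambda\tau)^m$.

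Third, I would extract the moments of $\nu_\beta$ from its Laplace transform \eqref{eq:monotonicity-Mittag}. Expanding $\mathrm{e}^{-z\tau}$ as a power series and using the series \eqref{eq:ML-function} defining $E_\beta$, the identity
\[
\sum_{k=0}^{\infty}\frac{(-z)^{k}}{k!}\int_{0}^{\infty}\tau^{k}\,\mathrm{d}\nu_{\beta}(\tau) \;=\; E_{\beta}(-z) \;=\; \sum_{k=0}^{\infty}\frac{(-z)^{k}}{\Gamma(\beta k+1)}
\]
holds on a neighbourhood of $z=0$ (the interchange is justified by finiteness of the exponential moments of $\nu_\beta$), so matching coefficients yields $\int_{0}^{\infty}\tau^{m}\,\mathrm{d}\nu_{\beta}(\tau)=m!/\Gamma(\beta m+1)$. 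Inserting this into the previous step and swapping the finite sum with the integral produces exactly \eqref{cnn}.

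The main obstacle is really only bookkeeping: ensuring that the two interchanges of sum/integral (Fubini across $\nu_\beta$ and $\pi_{\lambda\tau}$, and the coefficient extraction from the Mittag-Leffler series) are legitimate. Both are handled by the entireness of the relevant Laplace transforms, so the argument is more assembly than analysis.
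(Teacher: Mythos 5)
Your argument is correct. Note first that the paper itself gives no proof of Theorem~\ref{thm:fPmm}; it simply cites \cite{L09}, so there is nothing internal to compare against. Your route through the mixture representation of Lemma~\ref{lem:mixture-1d} is a natural and clean one: writing $m_{\lambda,\beta}(n)=\int_0^\infty\big(\int x^n\,\mathrm{d}\pi_{\lambda\tau}(x)\big)\mathrm{d}\nu_\beta(\tau)$, using the Touchard identity $\int x^n\,\mathrm{d}\pi_\mu(x)=\sum_{m=0}^n S(n,m)\mu^m$, and then the moments $\int_0^\infty\tau^m\,\mathrm{d}\nu_\beta(\tau)=m!/\Gamma(\beta m+1)$ reproduces \eqref{cnn} exactly, and the answer checks against the low-order moments listed after the theorem. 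A direct proof that avoids $\nu_\beta$ altogether is also available and is presumably closer to the cited source: expand $k^n=\sum_m S(n,m)(k)_m$ in $m_{\lambda,\beta}(n)=\sum_k k^n\frac{\lambda^k}{k!}E_\beta^{(k)}(-\lambda)$ and observe that the inner sum $\sum_{j\ge0}\frac{\lambda^j}{j!}E_\beta^{(j+m)}(-\lambda)$ is the Taylor series of $E_\beta^{(m)}$ about $-\lambda$ evaluated at $0$, i.e.\ $E_\beta^{(m)}(0)=m!/\Gamma(\beta m+1)$. Two small refinements to your justifications: since all integrands are nonnegative, the main interchange is Tonelli rather than Fubini and needs no a priori integrability; and for the moments of $\nu_\beta$ you invoke ``finiteness of exponential moments of $\nu_\beta$,'' which is true (the Wright density decays super-exponentially) but is itself a nontrivial fact --- it is cleaner to extract $\int\tau^m\,\mathrm{d}\nu_\beta(\tau)$ by one-sided differentiation of the completely monotone function $z\mapsto E_\beta(-z)$ at $z=0$, using monotone convergence on the difference quotients, which requires only \eqref{eq:monotonicity-Mittag} for $\mathrm{Re}(z)\ge0$ as stated in the paper.
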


Here are the first few moments of the measure $\pi_{\lambda,\beta}$:
\begin{center}
$\begin{array}{rcl}
m_{\lambda,\beta}(0) & = & 1,\\
m_{\lambda,\beta}(1) & = & {\displaystyle \frac{\lambda}{\Gamma(\beta+1)},}\\
m_{\lambda,\beta}(2) & = & {\displaystyle \frac{\lambda}{\Gamma(\beta+1)}+\frac{2\lambda^{2}}{\Gamma(2\beta+1)},}\\
m_{\lambda,\beta}(3) & = & {\displaystyle \frac{\lambda}{\Gamma(\beta+1)}+\frac{6\lambda^{2}}{\Gamma(2\beta+1)}+\frac{6\lambda^{3}}{\Gamma(3\beta+1)}.}
\end{array}$
\par\end{center}

\begin{flushleft}
When $\beta=1$, these moments become the moments of the Poisson measure.
\par\end{flushleft}

In addition to the Poisson measure $\pi_{\lambda}$ and fPm $\pi_{\lambda,\beta}$
in $\mathbb{N}_{0}$ we also need the two dimensional version of both
of these measures in $\mathbb{N}_{0}^{2}$ or $\mathbb{R}^{2}$, the
reason for that is clear after Corollary \ref{cor:orthogonal-for-beta1}.
 The $d$-dimensional Poisson measure is given by 
\[
\pi_{\vec{\lambda}}^{d}(\{k_{1},\dots,k_{d}\})=\prod_{i=1}^{d}\frac{\lambda_{i}^{k_{i}}}{k_{i}!}\mathrm{e}^{-\lambda_{i}}.
\]
The Laplace transform of $\pi_{\vec{\lambda}}^{2}$ is given by
\begin{equation}
l_{\pi_{\vec{\lambda}}^{2}}(z)=\int_{\mathbb{R}^{2}}\mathrm{e}^{(x,s)}\,\mathrm{d}\pi_{\vec{\lambda}}^{2}(x)=\exp\big(\lambda_{1}(\mathrm{e}^{s_{1}}-1)+\lambda_{2}(\mathrm{e}^{s_{2}}-1)\big)\label{eq:LT-Pm-dim2}
\end{equation}
where $s=(s_{1},s_{2})\in\mathbb{R}^{2}$. For any $0<\beta\le1$,
$\vec{\lambda}\in(\mathbb{R}_{+}^{*})^{2},$ then a possible fractional
generalization of $\pi_{\vec{\lambda}}^{2}$, denoted by $\pi_{\vec{\lambda},\beta}^{2}$,
is given, via its Laplace transform, by replacing the first exponential
function on the right hand side of \eqref{eq:LT-Pm-dim2} by the Mittag-Leffler
function. More precisely, the Laplace transform of $\pi_{\vec{\lambda},\beta}^{2}$
is given by 
\begin{equation}
l_{\pi_{\vec{\lambda},\beta}^{2}}(s)=\int_{\mathbb{R}^{2}}\mathrm{e}^{(x,s)}\,\mathrm{d}\pi_{\vec{\lambda},\beta}^{2}(x)=E_{\beta}\big(\lambda_{1}(\mathrm{e}^{s_{1}}-1)+\lambda_{2}(\mathrm{e}^{s_{2}}-1)\big),\label{eq:LT-fPm-2d}
\end{equation}
where $s=(s_{1},s_{2})\in\mathbb{R}^{2}$.

The moments of the measure $\pi_{\vec{\lambda},\beta}^{2}$, denoted
by $m_{\vec{\lambda},\beta}^{2}(n_{1},n_{2})$, can be obtained by
applying $\frac{d^{n_{1}}}{ds_{1}^{n_{1}}}\frac{d^{n_{2}}}{ds_{2}^{n_{2}}}$,
$n_{1},n_{2}\in\mathbb{N}_{0}$, to Equation \eqref{eq:LT-fPm-2d}
and then evaluating at $s_{1}=s_{2}=0$. As an example, here we compute
the moments $m_{\vec{\lambda},\beta}^{2}(1,1)$ and $m_{\vec{\lambda},\beta}^{2}(1,2)$
of the measure $\pi_{\vec{\lambda},\beta}^{2}$ needed later on:

\begin{align*}
m_{\vec{\lambda},\beta}^{2}(1,1)=\int_{\mathbb{R}^{2}}x_{1}x_{2}\,\mathrm{d}\pi_{\vec{\lambda},\beta}^{2}(x_{1,}x_{2}) & =\frac{2\lambda_{1}\lambda_{2}}{\Gamma(2\beta+1)},\\
m_{\vec{\lambda},\beta}^{2}(1,2)=\int_{\mathbb{R}^{2}}x_{1}x_{2}^{2}\,\mathrm{d}\pi_{\vec{\lambda},\beta}^{2}(x_{1,}x_{2}) & =\frac{2\lambda_{1}\lambda_{2}}{\Gamma(2\beta+1)}+\frac{6\lambda_{1}\lambda_{2}^{2}}{\Gamma(3\beta+1)}.
\end{align*}

We apply the Gram-Schmidt orthogonalization process to the monomials
$x^{n}$, $n\in\mathbb{\mathbb{N}}_{0},$ to obtain monic polynomials
$C_{n}^{\beta}(x)$ with $\deg C_{n}^{\beta}(x)=n$ with respect to
the inner product
\[
(p,q)_{\pi_{\lambda,\beta}}:=\int_{\mathbb{R}}p(x)q(x)\,\mathrm{d}\pi_{\lambda,\beta}(x).
\]
These polynomials are determined by the moments of the measure $\pi_{\lambda,\beta}$.
The first few of these polynomials are given by
\begin{align*}
C_{0}^{\beta}(x) & =1,\\
C_{1}^{\beta}(x) & =x-(x,C_{0}^{\beta})_{\pi_{\lambda,\beta}}C_{0}^{\beta}(x)=x-m_{\lambda,\beta}(1),\\
C_{2}^{\beta}(x) & =x^{2}-(x^{2},C_{0}^{\beta})_{\pi_{\lambda,\beta}}C_{0}^{\beta}(x)-\left(x^{2},\frac{C_{1}^{\beta}}{\|C_{1}^{\beta}\|_{\pi_{\lambda,\beta}}^{2}}\right)_{\pi_{\lambda,\beta}}C_{1}^{\beta}(x),\\
 & =x^{2}-A(\beta,\lambda)x-m_{\lambda,\beta}(2)+A(\beta,\lambda)m_{\lambda,\beta}(1),
\end{align*}
where 
\[
A(\beta,\lambda)=\frac{m_{\lambda,\beta}(3)-m_{\lambda,\beta}(1)m_{\lambda,\beta}(2)}{m_{\lambda,\beta}(2)-(m_{\lambda,\beta}(1))^{2}}.
\]
When $\beta=1$, the measure $\pi_{\lambda,1}$ becomes the Poisson
measure $\pi_{\lambda}$ and the polynomials $C_{n}^{1}(x),$ $n\in\mathbb{N}_{0}$,
are the classical Charlier polynomials.
\begin{cor}
\label{cor:orthogonal-for-beta1}For $\beta\in(0,1]$ it holds
\[
\int_{\mathbb{R}^{2}}C_{1}^{\beta}(x_{1})C_{2}^{\beta}(x_{2})\,\mathrm{d}\pi_{\vec{\lambda},\beta}^{2}(x_{1},x_{2})=0
\]
if, and only if, $\beta=1.$
\end{cor}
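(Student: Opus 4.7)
The plan is to prove the two directions of the equivalence separately. For the easy direction, assuming $\beta=1$, I observe that setting $\beta=1$ in the Laplace transform formula \eqref{eq:LT-fPm-2d} gives the product $\exp(\lambda_1(\mathrm{e}^{s_1}-1))\cdot\exp(\lambda_2(\mathrm{e}^{s_2}-1))$, so $\pi_{\vec{\lambda},1}^{2}=\pi_{\lambda_{1}}\otimes\pi_{\lambda_{2}}$ is the product of two $1$D Poisson measures and hence $x_{1},x_{2}$ are independent under $\pi_{\vec{\lambda},1}^{2}$. Since each Charlier polynomial $C_{n}^{1}$ with $n\ge1$ is orthogonal to the constant $1$ under its Poisson marginal, the integral factors as $E_{\pi_{\lambda_{1}}}[C_{1}^{1}]\cdot E_{\pi_{\lambda_{2}}}[C_{2}^{1}]=0\cdot0=0$.

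For the converse direction, I first note that for any $\beta\in(0,1]$ the $x_{i}$-marginal of $\pi_{\vec{\lambda},\beta}^{2}$ is the $1$D fPm $\pi_{\lambda_{i},\beta}$, seen by setting the other $s_{j}=0$ in \eqref{eq:LT-fPm-2d}, which reduces to $l_{\pi_{\lambda_{i},\beta}}$ from \eqref{eq:LT-fPm}. Expanding the product
$C_{1}^{\beta}(x_{1})C_{2}^{\beta}(x_{2})=(x_{1}-m_{\lambda_{1},\beta}(1))(x_{2}^{2}-A(\beta,\lambda_{2})x_{2}+c_{0})$, with $c_{0}=A(\beta,\lambda_{2})m_{\lambda_{2},\beta}(1)-m_{\lambda_{2},\beta}(2)$, and integrating term by term, the contributions involving $c_{0}$ cancel and the remaining pure-marginal terms collapse by the definitions of $m_{\lambda_{i},\beta}(k)$, leaving
\[
I=\mathrm{Cov}(x_{1},x_{2}^{2})-A(\beta,\lambda_{2})\,\mathrm{Cov}(x_{1},x_{2}),
\]
with covariances taken under $\pi_{\vec{\lambda},\beta}^{2}$ and computable from the joint moments $M_{1,1},M_{1,2}$ listed just above the statement together with the marginal moments from Theorem \ref{thm:fPmm}.

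The main obstacle is then to verify that $I\ne0$ for every $\beta\in(0,1)$. Substituting all explicit moment formulas and the formula for $A(\beta,\lambda_{2})$, the quantity $I$ simplifies to a rational expression whose numerator is proportional to $\lambda_{1}\lambda_{2}^{2}\,[T_{1}T_{3}+T_{1}^{2}T_{2}-2T_{2}^{2}]$ with $T_{n}:=n!/\Gamma(n\beta+1)$. Since $T_{n}=1$ for all $n$ when $\beta=1$, this bracket equals $1\cdot1+1\cdot1-2\cdot1=0$ precisely at $\beta=1$; for $\beta\in(0,1)$, after clearing denominators, one must establish the strict inequality $3\Gamma(\beta+1)\Gamma(2\beta+1)^{2}+\Gamma(2\beta+1)\Gamma(3\beta+1)\ne4\Gamma(\beta+1)^{2}\Gamma(3\beta+1)$. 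A conceptually cleaner route, which I would adopt if the direct inequality proves unwieldy, is to use a $2$D mixture representation $\pi_{\vec{\lambda},\beta}^{2}=\int_{0}^{\infty}(\pi_{\lambda_{1}\tau}\otimes\pi_{\lambda_{2}\tau})\,\mathrm{d}\nu_{\beta}(\tau)$, an immediate extension of Lemma \ref{lem:mixture-1d} obtained by matching Laplace transforms. Under this representation $I$ becomes an $L^{2}(\nu_{\beta})$-inner product between two explicit mean-zero polynomials in $\tau$, whose orthogonality fails generically whenever $\nu_{\beta}$ is not a Dirac mass, i.e., whenever $\beta<1$.
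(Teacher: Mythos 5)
Your forward direction ($\beta=1$) is exactly the paper's: the Laplace transform \eqref{eq:LT-fPm-2d} factorizes at $\beta=1$, so $\pi_{\vec{\lambda},1}^{2}$ is a product of Poisson marginals and the Charlier polynomials integrate to zero marginally. Your converse also follows the paper's route --- expand $C_{1}^{\beta}(x_{1})C_{2}^{\beta}(x_{2})$, integrate term by term, and express the result through $m_{\vec{\lambda},\beta}^{2}(1,1)$, $m_{\vec{\lambda},\beta}^{2}(1,2)$ and the marginal moments --- and your algebra checks out: with $T_{n}=n!/\Gamma(n\beta+1)$ the integral equals $\lambda_{1}\lambda_{2}^{2}\,[T_{1}T_{3}+T_{1}^{2}T_{2}-2T_{2}^{2}]\big/\big[T_{1}+(T_{2}-T_{1}^{2})\lambda_{2}\big]$, whose denominator is $\mathrm{Var}_{\pi_{\lambda_{2},\beta}}(x_{2})/\lambda_{2}>0$, so everything reduces to the single $\lambda$-independent Gamma inequality you display. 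That is in fact sharper than the paper, which stops at the moment expression \eqref{eq:moments-12}, calls it $F(\beta,\lambda_{1},\lambda_{2})$, and justifies $F\neq0$ only by plotting it for three choices of $\vec{\lambda}$ (Figure \ref{fig:beta1}).

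The gap is that the decisive step is never actually proved. You reduce the claim to $3\Gamma(\beta+1)\Gamma(2\beta+1)^{2}+\Gamma(2\beta+1)\Gamma(3\beta+1)\neq4\Gamma(\beta+1)^{2}\Gamma(3\beta+1)$ for $\beta\in(0,1)$, but only say one ``must establish'' it; and your fallback via the mixture $\pi_{\vec{\lambda},\beta}^{2}=\int_{0}^{\infty}(\pi_{\lambda_{1}\tau}\otimes\pi_{\lambda_{2}\tau})\,\mathrm{d}\nu_{\beta}(\tau)$ ends with ``orthogonality fails generically,'' which is not an argument: writing $\mu_{n}=\int_{0}^{\infty}\tau^{n}\,\mathrm{d}\nu_{\beta}(\tau)=T_{n}$, the bracket is $\mu_{1}\mu_{3}+\mu_{1}^{2}\mu_{2}-2\mu_{2}^{2}$, the sum of a nonnegative term ($\mu_{1}\mu_{3}-\mu_{2}^{2}\geq0$ by Cauchy--Schwarz) and a nonpositive term ($\mu_{2}(\mu_{1}^{2}-\mu_{2})\leq0$), so its sign is not decided by non-degeneracy of $\nu_{\beta}$ alone and genuinely requires the specific Gamma-function structure. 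To be fair, the paper's own proof has the same defect (a figure in place of an inequality), so your write-up is no weaker than the original --- but neither constitutes a complete proof of the ``only if'' direction.
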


\begin{proof}
When $\beta=1$, we have the well known orthogonal property of the
Charlier polynomials, that is,
\[
\int_{\mathbb{R}^{2}}C_{1}(x_{1})C_{2}(x_{2})\,\mathrm{d}\pi_{\vec{\lambda}}^{2}(x_{1},x_{2})=\int_{\mathbb{R}}C_{1}(x_{1})\,\mathrm{d}\pi_{\lambda}(x_{1})\int_{\mathbb{R}}C_{2}(x_{2})\,\mathrm{d}\pi_{\lambda}(x_{2})=0.
\]
On the other hand, for $\beta\in(0,1)$ we have
\begin{align}
 & \int_{\mathbb{R}^{2}}C_{1}^{\beta}(x_{1})C_{2}^{\beta}(x_{2})\,\mathrm{d}\pi_{\vec{\lambda},\beta}^{2}(x_{1},x_{2})\nonumber \\
 & =\int_{\mathbb{R}^{2}}\left(x_{1}-m_{\lambda_{1},\beta}(1)\right)\big(x_{2}^{2}-A(\beta,\lambda_{2})x_{2}-m_{\lambda_{2},\beta}(2)+A(\beta,\lambda_{2})m_{\lambda_{2},\beta}(1)\big)\,\mathrm{d}\pi_{\vec{\lambda},\beta}^{2}(x_{1},x_{2})\nonumber \\
 & =m_{\vec{\lambda},\beta}^{2}(1,2)-A(\beta,\lambda_{2})m_{\vec{\lambda},\beta}^{2}(1,1)-m_{\lambda_{1},\beta}(1)m_{\lambda_{2},\beta}(2)+A(\beta,\lambda_{2})m_{\lambda_{1},\beta}(1)m_{\lambda_{2},\beta}(1).\label{eq:moments-12}
\end{align}
Equation (3.3.3) defines a function $F(\beta,\lambda_{1},\lambda_{2})$
which is not equal from to zero for every $\beta\in(0,1)$, see Figure
3.1.
\end{proof}
\begin{figure}
\begin{centering}
\includegraphics[scale=0.4]{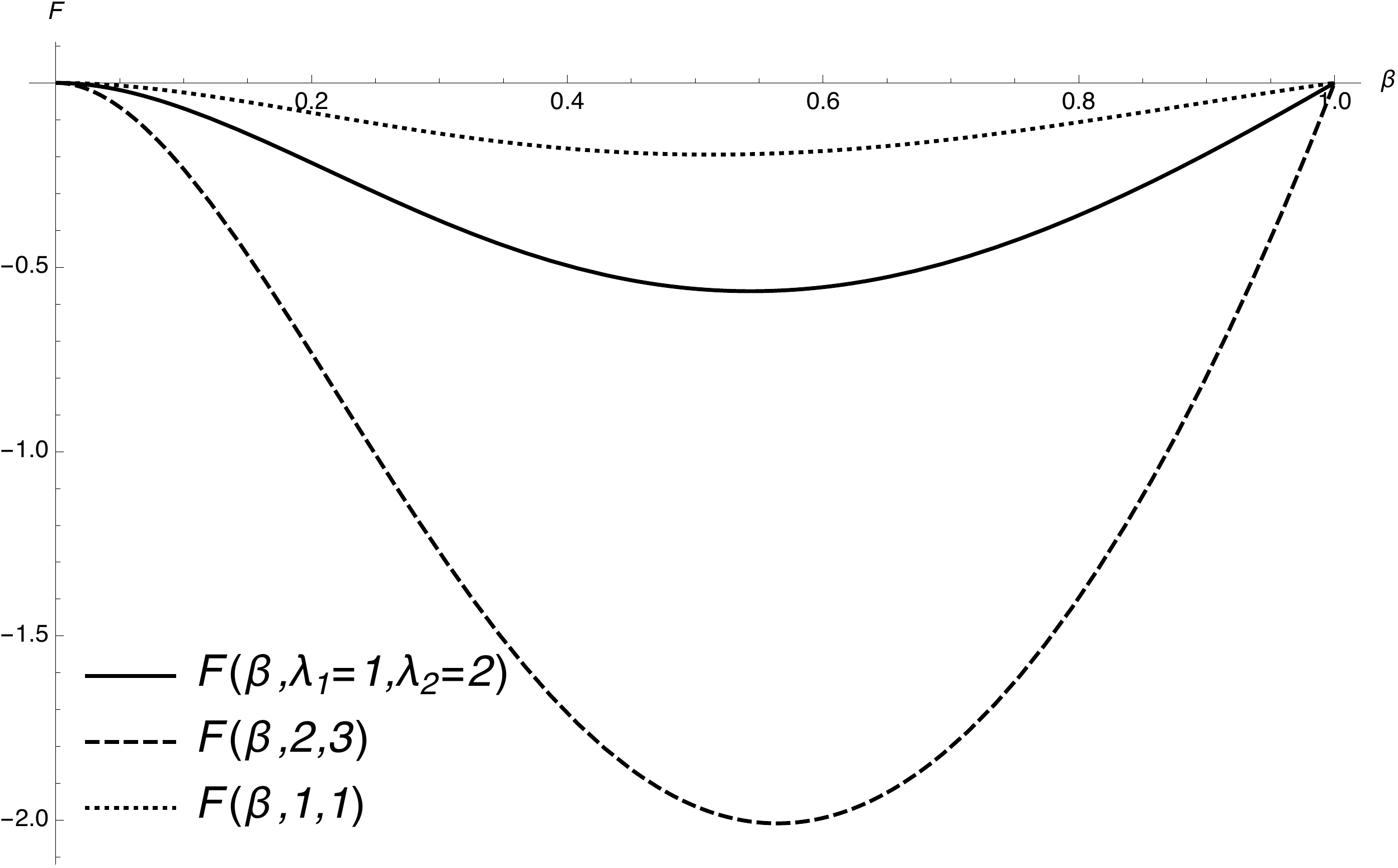}
\par\end{centering}
\caption{\label{fig:beta1}The graph of the function $F(\cdot,\lambda_{1},\lambda_{2})$
with $\vec{\lambda}=(1,1),(2,3),(1,2)$.}
\end{figure}

Having in mind the above results, this motivate us to introduce a
biorthogonal system of the fPm in higher dimension.

\section{Infinite Dimensional Fractional Poisson Measure}

\label{sec:Inf-Dim-fPm}After the above preparation, we are ready
to define the fPm in infinite dimensions. We define the fPm in the
linear space $\mathcal{D}'$ and then a more careful analysis shows
that fPm is indeed a probability measure on the configuration space
$\Gamma$ over $\mathbb{R}^{d}$.

\subsection{Fractional Poisson Measure on the Linear Space $\mathcal{D}'$}

\label{sec:fPm-on-D'}Let $\vec{\lambda}=(\lambda_{1},\dots,\lambda_{d})\in(\mathbb{R}_{+}^{*})^{d}$
and $z=(z_{1},\dots,z_{d})\in\mathbb{R}^{d}$ be given. The $d$-dimensional
Poisson measure has characteristic function given by
\begin{equation}
C_{\pi_{\lambda}^{d}}(z)=\int_{\mathbb{R}^{d}}\mathrm{e}^{\mathrm{i}(x,z)}\,\mathrm{d}\pi_{\vec{\lambda}}^{d}(x)=\exp\left(\sum_{k=1}^{d}\lambda_{k}(\mathrm{e}^{\mathrm{i}z_{k}}-1)\right).\label{eq:CF-Pm-fd}
\end{equation}

Let us consider a \emph{Radon measure} $\sigma$ on $(\mathbb{R}^{d},\mathcal{B}(\mathbb{R}^{d}))$,
that is, $\sigma(\Lambda)<\infty$ for every $\Lambda\in\mathcal{B}_{c}(\mathbb{R}^{d})$,
where $\mathcal{B}_{c}(\mathbb{R}^{d})$ the family of all $\mathcal{B}(\mathbb{R}^{d})$-measurable
sets with compact closure. Elements of $\mathcal{B}_{c}(\mathbb{R}^{d})$
are called finite volumes. Here, we assume $\sigma$ to be non-degenerate
(i.e., $\sigma(O)>0$ for all non-empty open sets $O\subset\mathbb{R}^{d}$)
and non-atomic (i.e., $\sigma(\left\{ x\right\} )=0$ for every $x\in\mathbb{R}^{d}$).
In addition, we always assume that $\sigma(\mathbb{R}^{d})=\infty$.
Let $\mathcal{D}:=\mathcal{D}(\mathbb{R}^{d})$ be the space of $C^{\infty}$-functions
with compact support in $\mathbb{R}^{d}$ and $\mathcal{D}':=\mathcal{D}'(\mathbb{R}^{d})$
be the dual of $\mathcal{D}$ with respect to the Hilbert space $L^{2}(\sigma):=L^{2}(\mathbb{R}^{d},\mathcal{B}(\mathbb{R}^{d}),\sigma)$.
In this way, we obtain the triple 
\begin{equation}
\mathcal{D}\subset L^{2}(\sigma)\subset\mathcal{D}'.\label{eq:triple-L2sigma}
\end{equation}
The infinite-dimensional generalization of the Poisson measure with
intensity measure $\sigma$, denoted by $\pi_{\sigma}$, is obtained
by generalizing the characteristic function \eqref{eq:CF-Pm-fd} to
\begin{equation}
C_{\pi_{\sigma}}(\varphi):=\int_{\mathcal{D}'}\mathrm{e}^{\mathrm{i}\langle w,\varphi\rangle}\,\mathrm{d}\pi_{\sigma}(w)=\exp\left(\int_{\mathbb{R}^{d}}(\mathrm{e}^{\mathrm{i}\varphi(x)}-1)\mathrm{\,d}\sigma(x)\right),\;\varphi\in\mathcal{D}.\label{eq:CF-Pm-inf}
\end{equation}
This is achieve through the Bochner-Minlos theorem (see e.g.~\cite{BK88})
by showing that $C_{\pi_{\sigma}}$ is the Fourier transform of a
measure on the distribution space $\mathcal{D}'$, see \cite{AKR97}
and references therein. Now, using the fact that the Mittag-Leffler
function is a natural generalization of the exponential function,
one conjectures that the characteristic functional 
\begin{equation}
C_{\pi_{\sigma}^{\beta}}(\varphi):=\int_{\mathcal{D}'}\mathrm{e}^{\mathrm{i}\langle w,\varphi\rangle}\,\mathrm{d}\pi_{\sigma}^{\beta}(w)=E_{\beta}\left(\int_{\mathbb{R}^{d}}(\mathrm{e}^{\mathrm{i}\varphi(x)}-1)\mathrm{\,d}\sigma(x)\right),\quad\varphi\in\mathcal{D},\label{eq:CF-fPm-inf}
\end{equation}
defines an infinite-dimensional version of the fPm, denoted by $\pi_{\sigma}^{\beta}$.
However, since the Mittag-Leffler function does not satisfy the semigroup
property of the exponential, it is not obvious that this is the Fourier
transform of a measure on $\mathcal{D}'$. Hence, we use the Bochner-Minlos
theorem to show that $C_{\pi_{\sigma}^{\beta}}$ is the Fourier transform
of a probability measure $\pi_{\sigma}^{\beta}$ on the distribution
space $\mathcal{D}'$.
\begin{thm}
\label{thm:characteristic-functional-of-fPm-on-D'}For each $0<\beta\leq1$
fixed, the functional $C_{\pi_{\sigma}^{\beta}}$ in Equation \eqref{eq:CF-fPm-inf}
is the characteristic functional on $\mathcal{D}$ of a probability
measure $\pi_{\sigma}^{\beta}$ on the distribution space $\mathcal{D}'$.
\end{thm}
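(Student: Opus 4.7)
The plan is to reduce the claim to the classical infinite-dimensional Bochner-Minlos theorem by exploiting the subordination representation of the Mittag-Leffler function, exactly as was done in the finite-dimensional case (Lemma~\ref{lem:mixture-1d}). Writing $u(\varphi):=\int_{\mathbb{R}^{d}}(\mathrm{e}^{\mathrm{i}\varphi(x)}-1)\mathrm{d}\sigma(x)$, I note that $\operatorname{Re}(-u(\varphi))=\int_{\mathbb{R}^{d}}(1-\cos\varphi(x))\mathrm{d}\sigma(x)\geq 0$, so the complete monotonicity identity \eqref{eq:monotonicity-Mittag} applies to give
\[
C_{\pi_{\sigma}^{\beta}}(\varphi)=E_{\beta}\bigl(u(\varphi)\bigr)=\int_{0}^{\infty}\mathrm{e}^{\tau u(\varphi)}\,\mathrm{d}\nu_{\beta}(\tau)=\int_{0}^{\infty}C_{\pi_{\tau\sigma}}(\varphi)\,\mathrm{d}\nu_{\beta}(\tau),
\]
where the last equality uses \eqref{eq:CF-Pm-inf} with intensity measure $\tau\sigma$. (A mild justification is needed here to extend \eqref{eq:monotonicity-Mittag} from nonnegative real arguments to complex arguments with nonnegative real part; this follows from analytic continuation of both sides in the half plane $\operatorname{Re}(z)\geq 0$, together with the absolute convergence of the integral.)

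Next I would verify the three Bochner--Minlos hypotheses for $C_{\pi_{\sigma}^{\beta}}$ on $\mathcal{D}$. Normalization $C_{\pi_{\sigma}^{\beta}}(0)=E_{\beta}(0)=1$ is immediate. For continuity, if $\varphi_{n}\to\varphi$ in $\mathcal{D}$ then all $\varphi_{n}$ are supported in a common compact set $K$, and dominated convergence on $K$ gives $u(\varphi_{n})\to u(\varphi)$ in $\mathbb{C}$; continuity of $E_{\beta}$ then yields continuity of $C_{\pi_{\sigma}^{\beta}}$. For positive definiteness, for any $\varphi_{1},\dots,\varphi_{N}\in\mathcal{D}$ and $c_{1},\dots,c_{N}\in\mathbb{C}$,
\[
\sum_{j,k=1}^{N}c_{j}\bar{c}_{k}\,C_{\pi_{\sigma}^{\beta}}(\varphi_{j}-\varphi_{k})=\int_{0}^{\infty}\Bigl(\sum_{j,k=1}^{N}c_{j}\bar{c}_{k}\,C_{\pi_{\tau\sigma}}(\varphi_{j}-\varphi_{k})\Bigr)\mathrm{d}\nu_{\beta}(\tau)\geq 0,
\]
since each $C_{\pi_{\tau\sigma}}$ is already the characteristic functional of the Poisson measure $\pi_{\tau\sigma}$ on $\mathcal{D}'$ and $\nu_{\beta}$ is a (positive) probability measure. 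The Bochner--Minlos theorem then produces a unique probability measure $\pi_{\sigma}^{\beta}$ on $(\mathcal{D}',\mathcal{C}_{\sigma}(\mathcal{D}'))$ whose Fourier transform is $C_{\pi_{\sigma}^{\beta}}$.

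As an alternative (and shorter) route, I could define $\pi_{\sigma}^{\beta}$ directly as the mixture $\pi_{\sigma}^{\beta}(A):=\int_{0}^{\infty}\pi_{\tau\sigma}(A)\,\mathrm{d}\nu_{\beta}(\tau)$ for $A\in\mathcal{C}_{\sigma}(\mathcal{D}')$, verify by Fubini that this is a probability measure on $\mathcal{D}'$, and compute its Fourier transform to obtain \eqref{eq:CF-fPm-inf}. This bypasses positive definiteness but still needs the measurability of $\tau\mapsto\pi_{\tau\sigma}(A)$, which follows from the standard monotone class / $\pi$--$\lambda$ argument starting from cylinder sets.

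The main technical obstacle I anticipate is the first one: rigorously justifying the Laplace representation $E_{\beta}(u(\varphi))=\int_{0}^{\infty}\mathrm{e}^{\tau u(\varphi)}\mathrm{d}\nu_{\beta}(\tau)$ for the complex argument $u(\varphi)$, since \eqref{eq:monotonicity-Mittag} is stated for $\operatorname{Re}(z)\geq 0$ in its ``one-sided'' form $E_{\beta}(-z)$. Once this analytic identity is in hand, everything else reduces to standard Poissonian / Bochner--Minlos machinery together with Fubini.
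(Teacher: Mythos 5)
Your proposal is correct and follows essentially the same route as the paper: verify normalization and continuity directly, use the subordination identity \eqref{eq:monotonicity-Mittag} to write $C_{\pi_{\sigma}^{\beta}}(\varphi)=\int_{0}^{\infty}C_{\pi_{\tau\sigma}}(\varphi)\,\mathrm{d}\nu_{\beta}(\tau)$, deduce positive definiteness from that of the Poisson characteristic functionals, and invoke Bochner--Minlos. The technical point you flag about complex arguments is already covered, since \eqref{eq:monotonicity-Mittag} is stated for all $z$ with $\mathrm{Re}(z)\geq 0$ and $\mathrm{Re}(-u(\varphi))\geq 0$ as you observe.
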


\begin{proof}
Using the properties of the Mittag-Leffler function, the functional
$C_{\pi_{\sigma}^{\beta}}$ is continuous and $C_{\pi_{\sigma}^{\beta}}(0)=1$
follow directly. To show that the functional $C_{\pi_{\sigma}^{\beta}}$
is positive definite, we use the complete monotonicity property of
$E_{\beta}$, $0<\beta<1$, see \eqref{eq:monotonicity-Mittag}. For
any $\varphi_{i}\in\mathcal{D}$, $z_{i}\in\mathbb{C}$, $i=1,\dots,n$,
using Equation~\eqref{eq:CF-Pm-inf}, we obtain 
\begin{align*}
\sum_{k,j=1}^{n}C_{\pi_{\sigma}^{\beta}}(\varphi_{k}-\varphi_{j})z_{k}\bar{z}_{j} & =\int_{0}^{\infty}\sum_{k,j=1}^{n}\mathrm{e}^{\tau\int_{\mathbb{R}^{d}}(\mathrm{e}^{\mathrm{i}(\varphi_{k}-\varphi_{j})}-1)\,\mathrm{d}\sigma(x)}z_{k}\bar{z}_{j}\,\mathrm{d}\nu_{\beta}(\tau)\\
 & =\int_{0}^{\infty}\sum_{k,j=1}^{n}C_{\pi_{\tau\sigma}}(\varphi_{k}-\varphi_{j})z_{k}\bar{z}_{j}\,\mathrm{d}\nu_{\beta}(\tau).
\end{align*}
Using the definition of $C_{\pi_{\tau\sigma}}$, the integrand of
the last integral may be written as
\[
\sum_{k,j=1}^{n}C_{\pi_{\tau\sigma}}(\varphi_{k}-\varphi_{j})z_{k}\bar{z}_{j}=\int_{\mathcal{D}'}\left|\sum_{k=1}^{n}\mathrm{e}^{\mathrm{i}\langle w,\varphi_{k}\rangle}z_{k}\right|^{2}\mathrm{d}\pi_{\tau\sigma}(w)\geq0.
\]
This implies that $C_{\pi_{\sigma}^{\beta}}$ is positive-definite.
Thus by the Bochner-Minlos theorem, $C_{\pi_{\sigma}^{\beta}}$ is
the characteristic functional of a probability measure $\pi_{\sigma}^{\beta}$
on the measurable space $(\mathcal{D}',\mathcal{C}_{\sigma}(\mathcal{D}'))$.
\end{proof}
\begin{rem}
By the analytic property of the Mittag-Leffler function one may write
\eqref{eq:CF-fPm-inf} for any $\varphi\in\mathcal{D}$ such that
$\mathrm{supp}\,\varphi\subset\Lambda\in\mathcal{B}_{c}(\mathbb{R}^{d})$,
as
\begin{align*}
C_{\pi_{\sigma}^{\beta}}(\varphi) & =E_{\beta}\left(\int_{\mathbb{R}^{d}}(\mathrm{e}^{\mathrm{i}\varphi(x)}-1)\mathrm{\,d}\sigma(x)\right)=E_{\beta}\left(\int_{\Lambda}(\mathrm{e}^{\mathrm{i}\varphi(x)}-1)\mathrm{\,d}\sigma(x)\right)\\
 & =E_{\beta}\left(\int_{\Lambda}\mathrm{e}^{\mathrm{i}\varphi(x)}\mathrm{\,d}\sigma(x)-\sigma(\Lambda)\right)\\
 & =\sum_{n=0}^{\infty}\frac{E_{\beta}^{(n)}\left(-\sigma(\Lambda)\right)}{n!}\left(\int_{\Lambda}\mathrm{e}^{\mathrm{i}\varphi(x)}\,\mathrm{d}\sigma(x)\right)^{n}\\
 & =\sum_{n=0}^{\infty}\frac{E_{\beta}^{(n)}\left(-\sigma(\Lambda)\right)}{n!}\int_{\Lambda^{n}}\mathrm{e}^{\mathrm{i}(\varphi(x_{1})+\dots+\varphi(x_{n}))}\,\mathrm{d}\sigma^{\otimes n}(x_{1},\dots,x_{n}),
\end{align*}
where $\sigma^{\otimes n}=\sigma\otimes\dots\otimes\sigma$ is a measure
defined on the Cartesian space $(\mathbb{R}^{d})^{n}:=\mathbb{R}^{d}\times\dots\times\mathbb{R}^{d}.$
In the Poisson case, we have $\exp\left(-\sigma(\Lambda)\right)$
instead of $E_{\beta}^{(n)}\left(-\sigma(\Lambda)\right)$, for all
$n\in\mathbb{N}_{0}$, while the rest of the terms are the same. Hence,
the main difference between these measures ($\pi_{\sigma}^{\beta}$
and $\pi_{\sigma}$) is the different weight given in each $n$-particle
space. In Subsection \ref{sec:fPm-cspace} we show that, indeed, the
support of the measure $\pi_{\sigma}^{\beta}$ is a subset of $\mathcal{D}'$,
called the configuration space over $\mathbb{R}^{d}$.
\end{rem}

We may now generalize the result of Lemma \ref{lem:mixture-1d} to
the present infinite dimensional setting.
\begin{lem}
For $0<\beta\leq1$, the fPm $\pi_{\sigma}^{\beta}$ is an integral
(or mixture) of Poisson measure $\pi_{\sigma}$ with respect to the
probability measure $\nu_{\beta}$, i.e.,
\begin{equation}
\pi_{\sigma}^{\beta}=\int_{0}^{\infty}\pi_{\tau\sigma}\,\mathrm{d\nu_{\beta}(\tau)}.\label{eq:fPm-mixture-inf}
\end{equation}
\end{lem}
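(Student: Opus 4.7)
The plan is to mirror the proof of Lemma~\ref{lem:mixture-1d}, replacing the Laplace transform with the characteristic functional on $\mathcal{D}$ and replacing uniqueness of the Laplace transform with the Bochner-Minlos uniqueness assertion already used in the proof of Theorem~\ref{thm:characteristic-functional-of-fPm-on-D'}. The case $\beta=1$ is immediate, since $\nu_{1}=\delta_{1}$ and the right-hand side collapses to $\pi_{\sigma}$, so I would focus on $0<\beta<1$.

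First I would define the candidate measure
\[
\mu(A):=\int_{0}^{\infty}\pi_{\tau\sigma}(A)\,\mathrm{d}\nu_{\beta}(\tau),\quad A\in\mathcal{C}_{\sigma}(\mathcal{D}'),
\]
after verifying that $\tau\mapsto\pi_{\tau\sigma}(A)$ is Borel-measurable. For a cylinder set $A$, this reduces to the continuity in $\tau$ of $\pi_{\tau\sigma}$ through its characteristic functional $\varphi\mapsto\exp(\tau\int(\mathrm{e}^{\mathrm{i}\varphi}-1)\,\mathrm{d}\sigma)$, and a monotone class argument extends measurability to the whole $\sigma$-algebra $\mathcal{C}_{\sigma}(\mathcal{D}')$. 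Since $\nu_{\beta}$ is a probability measure and each $\pi_{\tau\sigma}$ has total mass one, $\mu$ is automatically a probability measure on $(\mathcal{D}',\mathcal{C}_{\sigma}(\mathcal{D}'))$.

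Next I would compute the characteristic functional of $\mu$. For any $\varphi\in\mathcal{D}$, Fubini's theorem applies because $|\mathrm{e}^{\mathrm{i}\langle w,\varphi\rangle}|\leq 1$ and $\nu_{\beta}$ is finite, so
\[
C_{\mu}(\varphi)=\int_{0}^{\infty}C_{\pi_{\tau\sigma}}(\varphi)\,\mathrm{d}\nu_{\beta}(\tau)=\int_{0}^{\infty}\exp\!\left(\tau\int_{\mathbb{R}^{d}}(\mathrm{e}^{\mathrm{i}\varphi(x)}-1)\,\mathrm{d}\sigma(x)\right)\mathrm{d}\nu_{\beta}(\tau).
\]
Setting $z:=-\int_{\mathbb{R}^{d}}(\mathrm{e}^{\mathrm{i}\varphi(x)}-1)\,\mathrm{d}\sigma(x)$ one has $\mathrm{Re}(z)=\int_{\mathbb{R}^{d}}(1-\cos\varphi(x))\,\mathrm{d}\sigma(x)\geq 0$, so the complete monotonicity identity~\eqref{eq:monotonicity-Mittag} can be applied and yields $C_{\mu}(\varphi)=E_{\beta}(-z)$, which equals $C_{\pi_{\sigma}^{\beta}}(\varphi)$ by~\eqref{eq:CF-fPm-inf}. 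The uniqueness part of Bochner-Minlos then forces $\mu=\pi_{\sigma}^{\beta}$.

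The only delicate point is that~\eqref{eq:monotonicity-Mittag} must be used at a complex argument $z$ with $\mathrm{Re}(z)\geq 0$ rather than at a nonnegative real number; since \cite[Cor.~A.5]{GJRS14} is stated in precisely that generality, no additional analytic continuation is needed. The remaining steps are routine verifications of Fubini's hypothesis and of cylinder-set measurability, and follow the one-dimensional template verbatim.
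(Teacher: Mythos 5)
Your proof is correct and follows essentially the same route as the paper: both reduce the claim to the identity of characteristic functionals via the complete monotonicity representation \eqref{eq:monotonicity-Mittag} of $E_{\beta}$ and conclude by uniqueness. You merely add some welcome care about the measurability of $\tau\mapsto\pi_{\tau\sigma}(A)$, the Fubini hypothesis, and the fact that \eqref{eq:monotonicity-Mittag} is applied at a complex argument with nonnegative real part, all of which the paper leaves implicit.
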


\begin{proof}
For $\beta=1$, the result is clear as in the proof of Lemma \ref{lem:mixture-1d}.
For $0<\beta<1$, we use the representation \eqref{eq:monotonicity-Mittag}
of the Mittag-Leffler function, the characteristic functional \eqref{eq:CF-fPm-inf}
of $\pi_{\sigma}^{\beta}$ can be rewritten as 
\[
C_{\pi_{\sigma}^{\beta}}(\varphi)=\int_{0}^{\infty}\exp\left(-\tau\int_{\mathbb{R}^{d}}(1-\mathrm{e}^{\mathrm{i}\varphi(x)})\mathrm{\,d}\sigma(x)\right)\mathrm{d}\nu_{\beta}(\tau)
\]
with the integrand being the characteristic function of the Poisson
measure $\pi_{\tau\sigma}$, $\tau>0$. This implies that the characteristic
functional \eqref{eq:CF-fPm-inf} coincides with the characteristic
functional of the measure $\int_{0}^{\infty}\pi_{\tau\sigma}\,\mathrm{d}\nu_{\beta}(\tau)$.
The result follows by the uniqueness of the characteristic functional.
\end{proof}
The fPm $\pi_{\sigma}^{\beta}$ is indeed a probability measure on
$(\mathcal{D}',\mathcal{C}_{\sigma}(\mathcal{D}'))$. In what follows,
we are going to find an appropriate support for $\pi_{\sigma}^{\beta}$.

\subsection{Configuration Space}

\label{subsec:configuration-space} Recall that $\mathcal{B}(\mathbb{R}^{d})$
denotes the Borel $\sigma$-algebra on $\mathbb{R}^{d}$ and $\mathcal{B}_{c}(\mathbb{R}^{d})$
the system of all sets in $\mathcal{B}(\mathbb{R}^{d})$ which are
bounded and have compact closure. Below we recall the configuration
space over $\mathbb{R}^{d}$ and related concepts, see \cite{AKR97,KK02}
for more details.
\begin{defn}
The \textit{infinite configuration space}\textit{\emph{\index{configuration space@\textit{configuration space}}}}
$\Gamma:=\Gamma_{\mathbb{R}^{d}}$ over $\mathbb{R}^{d}$ is defined
as the set of all locally finite subsets from $\mathbb{R}^{d}$, that
is, 
\[
\Gamma:=\big\{\gamma\subset\mathbb{R}^{d}:|\gamma\cap\Lambda|<\infty\thinspace\text{for every }\thinspace\Lambda\in\mathcal{B}_{c}(\mathbb{R}^{d})\big\},
\]
where $|B|$ denotes the cardinality of the set $B$. The\index{sigma-algebra@$\sigma$-algebra}
elements of the space $\Gamma$ are called \emph{configurations}.
\end{defn}

Let $C_{0}(\mathbb{R}^{d})$ denote the class of all real-valued continuous
functions on $\mathbb{R}^{d}$ with compact support and $\mathcal{M}^{+}:=\mathcal{M}^{+}(\mathbb{R}^{d})$\nomenclature[positive measures on Rd]{\$\textbackslash\{\}mathcal\{M\}\^\{\}\{+\}:=\textbackslash\{\}mathcal\{M\}\^\{\}\{+\}(\textbackslash\{\}mathbb\{R\}\^\{\}\{d\})\$}{positive measures on \$\textbackslash\{\}mathbb\{R\}\^\{\}\{d\}\$}
(resp.~$\mathcal{M}_{\mathbb{N}_{0}}^{+}:=\mathcal{M}_{\mathbb{N}_{0}}^{+}(\mathbb{R}^{d})$)
denote the space of all positive (resp.~positive integer-valued)
Radon measures on $\mathcal{B}(\mathbb{R}^{d})$.
\begin{defn}
Each configuration $\gamma\in\Gamma$ can be identified with a non-negative
integer-valued Radon measure as follows
\[
\Gamma\ni\gamma\mapsto\sum_{x\in\gamma}\delta_{x}\in\mathcal{M}_{\mathbb{N}_{0}}^{+}\subset\mathcal{M}^{+},
\]
where $\delta_{x}$ is the Dirac measure at $x\in\mathbb{R}^{d}$
and $\sum_{x\in\emptyset}\delta_{x}:=0$ (zero measure). The space
$\Gamma$ can be endowed with the topology induced by the vague topology
on $\mathcal{M}^{+}$, i.e., the weakest topology on $\Gamma$ with
respect to which all mappings

\[
\Gamma\ni\gamma\mapsto\langle\gamma,f\rangle:=\langle f\rangle_{\gamma}:=\int_{\mathbb{R}^{d}}f(x)\,\mathrm{d}\gamma(x)=\sum_{x\in\gamma}f(x)\in\mathbb{R}
\]
are continuous for any $f\in C_{0}(\mathbb{R}^{d})$. 
\end{defn}

\begin{defn}
Let $\mathcal{B}(\Gamma)$ be the Borel $\sigma$-algebra corresponding
to the vague topology on $\Gamma$.
\begin{enumerate}
\item The $\sigma$-algebra $\mathcal{B}(\Gamma)$ is generated by the sets
of the form 
\begin{equation}
C_{\Lambda,n}=\left\{ \gamma\in\Gamma\,|\,|\gamma\cap\Lambda|=n\right\} \!,\label{eq:cylinder-sets}
\end{equation}
where $\Lambda\in\mathcal{B}_{c}(\mathbb{R}^{d})$, $n\in\mathbb{N}_{0}$,
and the set $C_{\Lambda,n}$ is a Borel set of $\Gamma$, that is,
$C_{\Lambda,n}\in\mathcal{B}(\Gamma)$. Sets of the form \eqref{eq:cylinder-sets}
are called \emph{cylinder sets}\index{sets!cylinder}.
\item For any $B\subset\mathbb{R}^{d}$, we introduce a function $N_{B}:\Gamma\rightarrow\mathbb{N}_{0}$
such that 
\begin{align*}
N_{B}(\gamma): & =|\gamma\cap B|,\quad\gamma\in\Gamma.
\end{align*}
Then $\mathcal{B}(\Gamma)$ is the minimal $\sigma$-algebra with
which all functions $N_{\Lambda}$, $\Lambda\in\mathcal{B}_{c}(\mathbb{R}^{d})$,
are measurable.
\end{enumerate}
\end{defn}

\begin{defn}
Let $Y\in\mathcal{B}(\mathbb{R}^{d})$ be given. The space\emph{ }of
configurations contained in\emph{ $Y$} is denoted by \emph{$\Gamma_{Y}$,
}i.e., 
\[
\Gamma_{Y}:=\left\{ \gamma\in\Gamma\mid|\gamma\cap(\mathbb{R}^{d}\setminus Y)|=0\right\} .
\]
The $\sigma$-algebra $\mathcal{B}(\Gamma_{Y})$ may be introduced
in a similar way 
\[
\mathcal{B}(\Gamma_{Y}):=\sigma\left(\left\{ N_{\Lambda}{\upharpoonright}_{\Gamma_{Y}}\mid\Lambda\in\mathcal{B}_{c}(\mathbb{R}^{d})\right\} \right),
\]
where $N_{\Lambda}{\upharpoonright}_{\Gamma_{Y}}$ denotes the restriction
of the mapping $N_{\Lambda}$ to $\Gamma_{Y}$. 
\end{defn}

\begin{defn}
Let $Y\in\mathcal{B}(\mathbb{R}^{d})$ be given. The \textit{space
of n-point configurations} $\Gamma_{Y}^{(n)}$ over a set $Y$ is
the subset of $\Gamma_{Y}$ defined by
\[
\Gamma_{Y}^{(n)}:=\{\gamma\in\Gamma_{Y}\mid|\gamma|=n\},n\in\mathbb{N},\quad\Gamma_{Y}^{(0)}:=\{\emptyset\}.
\]
\end{defn}

A topological structure may be introduced on $\Gamma_{Y}^{(n)}$,
$n\in\mathbb{N}$, through a natural surjective mapping from 
\[
\widetilde{Y^{n}}=\{(x_{1},\dotsc,x_{n})\mid x_{k}\in Y,x_{k}\neq x_{j}\thinspace\text{if}\thinspace k\neq j\}
\]
onto $\Gamma_{Y}^{(n)}$ defined by 
\[
\mathrm{sym}_{Y}^{n}:\widetilde{Y^{n}}\longrightarrow\Gamma_{Y}^{(n)}
\]
\[
(x_{1},\dotsc,x_{n})\mapsto\{x_{1},\dotsc,x_{n}\}.
\]
Indeed, using the mapping $\mathrm{sym}_{\Lambda}^{n},$ one constructs
a bijective mapping between $\Gamma_{Y}^{(n)}$ and the symmetrization
$\widetilde{Y^{n}}/S_{n}$ of $\widetilde{Y^{n}},$ where $S_{n}$
is the permutation group over $\{1,\dotsc,n\}.$ In this way, $\mathrm{sym}_{Y}^{n}$
induces a metric on $\Gamma_{Y}^{(n)}$. A set $U\subset\Gamma_{Y}^{(n)}$
is open in this topology if, and only if, the inverse image $(\mathrm{sym}_{Y}^{n})^{-1}(U)$
is open in $\widetilde{Y^{n}}.$ We denote by $\mathcal{B}(\Gamma_{Y}^{(n)})$
the corresponding Borel $\sigma$-algebra and $\mathcal{T}_{Y}^{(n)}$
the associated topology on $\Gamma_{Y}^{(n)}$.

For $\Lambda\in\mathcal{B}_{c}(\mathbb{R}^{d}),$ each space $\Gamma_{\Lambda}$
can be described by the disjoint union 
\[
\Gamma_{\Lambda}=\bigsqcup_{n=0}^{\infty}\Gamma_{\Lambda}^{(n)}.
\]
In particular, this representation provides an equivalent description
of the $\sigma$-algebra $\mathcal{B}(\Gamma_{\Lambda})$ as the $\sigma$-algebra
of the disjoint union of the $\sigma$-algebras $\mathcal{B}(\Gamma_{\Lambda}^{(n)})\thinspace,n\in\mathbb{N}_{0}.$
The corresponding topology is denoted by $\mathcal{T}_{\Lambda}$
such that $(\Gamma_{\Lambda},\mathcal{T}_{\Lambda})$ is a topological
space for each $\Lambda\in\mathcal{B}_{c}(\mathbb{R}^{d})$.

For each $\Lambda\in\mathcal{B}_{c}(\mathbb{R}^{d})$ and any pair
$\Lambda_{1},\Lambda_{2}\in\mathcal{B}_{c}(\mathbb{R}^{d})$ such
that $\Lambda_{1}\subset\Lambda_{2},$ let us consider the natural
measurable projections 
\begin{align}
p_{\Lambda}:\Gamma\longrightarrow\Gamma_{\Lambda} &  & p_{\Lambda_{1},\Lambda_{2}}:\Gamma_{\Lambda_{2}}\longrightarrow\Gamma_{\Lambda_{1}}\label{eq:projections}\\
\gamma\mapsto\gamma\cap\Lambda &  & \gamma\mapsto\gamma\cap\Lambda_{1}.\nonumber 
\end{align}
We use now the concept of the projective limit in order to show
that the measurable space $(\Gamma,\mathcal{B}(\Gamma))$ coincides
with the projective limit. More precisely, we have the following theorem.
\begin{thm}
The family $\left\{ (\Gamma_{\Lambda},\mathcal{B}(\Gamma_{\Lambda})),p_{\Lambda_{1},\Lambda_{2}},\mathcal{B}_{c}(\mathbb{R}^{d})\right\} $
is a projective system of measurable spaces with ordered index set
$(\mathcal{B}_{c}(\mathbb{R}^{d}),\subset)$ and the measurable space
$(\Gamma,\mathcal{B}(\Gamma))$ is (up to an isomorphism) the projective
limit together with the family of maps $p_{\Lambda}:\Gamma\longrightarrow\Gamma_{\Lambda}$
for any $\Lambda\in\mathcal{B}_{c}(\mathbb{R}^{d})$. In addition,
we have the following commutative diagram.

\[
\xyC{6pc}\xyR{4pc}\xymatrix{ & \Gamma'\ar@{->}[d]_{I}\ar[ddl]_{P_{\Lambda_{2}}}\ar[ddr]^{P_{\Lambda_{1}}}\\
 & \Gamma\ar[dl]^{p_{\Lambda_{2}}}\ar[dr]_{p_{\Lambda_{1}}}\\
\Gamma_{\Lambda_{2}}\ar[rr]_{p_{\Lambda_{2},\Lambda_{1}}} &  & \Gamma_{\Lambda_{1}}
}
\]

\end{thm}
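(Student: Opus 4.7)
The plan is to verify the two parts of the assertion in sequence: first that the family $\{(\Gamma_{\Lambda},\mathcal{B}(\Gamma_{\Lambda})), p_{\Lambda_{1},\Lambda_{2}}, \mathcal{B}_{c}(\mathbb{R}^{d})\}$ really is a projective system, and second that the candidate $(\Gamma,\mathcal{B}(\Gamma))$ together with the projections $p_{\Lambda}$ satisfies the associated universal property. The index set $(\mathcal{B}_{c}(\mathbb{R}^{d}),\subset)$ is directed because $\Lambda_{1}\cup\Lambda_{2}\in\mathcal{B}_{c}(\mathbb{R}^{d})$; for $\Lambda_{1}\subset\Lambda_{2}$, the restriction $p_{\Lambda_{1},\Lambda_{2}}\colon\gamma\mapsto\gamma\cap\Lambda_{1}$ is measurable via the identity $N_{\Delta}\circ p_{\Lambda_{1},\Lambda_{2}}=N_{\Delta\cap\Lambda_{1}}$ on $\Gamma_{\Lambda_{2}}$ for every $\Delta\in\mathcal{B}_{c}(\mathbb{R}^{d})$, since the generating family of $\mathcal{B}(\Gamma_{\Lambda_{1}})$ consists of such $N_{\Delta}$'s. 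The compatibility relations $p_{\Lambda_{1},\Lambda_{2}}\circ p_{\Lambda_{2},\Lambda_{3}}=p_{\Lambda_{1},\Lambda_{3}}$ and $p_{\Lambda_{1}}=p_{\Lambda_{1},\Lambda_{2}}\circ p_{\Lambda_{2}}$, valid for $\Lambda_{1}\subset\Lambda_{2}\subset\Lambda_{3}$, follow directly from the set-theoretic formula $\gamma\mapsto\gamma\cap\Lambda$.

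For the universal property, I would take an arbitrary measurable space $(\Gamma',\mathcal{A}')$ with a compatible family of measurable maps $P_{\Lambda}\colon\Gamma'\to\Gamma_{\Lambda}$, i.e., satisfying $p_{\Lambda_{1},\Lambda_{2}}\circ P_{\Lambda_{2}}=P_{\Lambda_{1}}$ whenever $\Lambda_{1}\subset\Lambda_{2}$, and construct a unique measurable map $I\colon\Gamma'\to\Gamma$ with $p_{\Lambda}\circ I=P_{\Lambda}$. Fix an exhausting sequence $\Lambda_{n}\uparrow\mathbb{R}^{d}$ in $\mathcal{B}_{c}(\mathbb{R}^{d})$, identify each $P_{\Lambda_{n}}(\gamma')\in\Gamma_{\Lambda_{n}}$ with the corresponding finite subset of $\Lambda_{n}$, and set
\[
I(\gamma'):=\bigcup_{n\in\mathbb{N}}P_{\Lambda_{n}}(\gamma')\subset\mathbb{R}^{d}.
\]
Compatibility forces $P_{\Lambda_{n}}(\gamma')=P_{\Lambda_{n+1}}(\gamma')\cap\Lambda_{n}$, so the union is an increasing limit and satisfies $I(\gamma')\cap\Lambda=P_{\Lambda}(\gamma')$ for every $\Lambda\in\mathcal{B}_{c}(\mathbb{R}^{d})$ (choose $n$ with $\Lambda\subset\Lambda_{n}$ and apply compatibility once more). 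In particular, $I(\gamma')$ is locally finite, belongs to $\Gamma$, is independent of the chosen exhausting sequence, and satisfies $p_{\Lambda}\circ I=P_{\Lambda}$ by construction.

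Measurability of $I$ then follows from $\mathcal{B}(\Gamma)=\sigma\bigl(\{p_{\Lambda}^{-1}(B)\mid \Lambda\in\mathcal{B}_{c}(\mathbb{R}^{d}),\,B\in\mathcal{B}(\Gamma_{\Lambda})\}\bigr)$, since $I^{-1}(p_{\Lambda}^{-1}(B))=P_{\Lambda}^{-1}(B)\in\mathcal{A}'$. Uniqueness is immediate: any two distinct locally finite subsets of $\mathbb{R}^{d}$ differ on some $\Lambda\in\mathcal{B}_{c}(\mathbb{R}^{d})$, so the family $(p_{\Lambda})_{\Lambda}$ separates points of $\Gamma$, and if $I'$ were another such map then $p_{\Lambda}\circ I'=p_{\Lambda}\circ I$ for all $\Lambda$ would force $I'=I$. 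The commutative diagram stated in the theorem is just $P_{\Lambda}=p_{\Lambda}\circ I$ combined with the compatibility of the $P_{\Lambda}$'s. The main technical point will be justifying the identification of abstract $\Gamma_{\Lambda}$-elements with actual finite subsets of $\Lambda$ uniformly enough that $\bigcup_{n}P_{\Lambda_{n}}(\gamma')$ genuinely produces an element of $\Gamma$; this is precisely the content of the compatibility hypothesis, so the proof reduces to careful bookkeeping rather than a deep argument.
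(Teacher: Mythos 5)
Your proposal is correct, and it is in fact more complete than the paper's own argument. The first half (directedness of $(\mathcal{B}_{c}(\mathbb{R}^{d}),\subset)$, measurability of $p_{\Lambda_{1},\Lambda_{2}}$ via $N_{\Delta}\circ p_{\Lambda_{1},\Lambda_{2}}=N_{\Delta\cap\Lambda_{1}}$, and the compatibility identities) coincides with what the paper does, though you spell out the measurability argument that the paper dismisses as ``clear from the construction.'' The second half is where you diverge: the paper stops at the relation $p_{\Lambda_{1}}=p_{\Lambda_{1},\Lambda_{2}}\circ p_{\Lambda_{2}}$ and simply asserts that ``by definition of $\mathcal{B}(\Gamma)$'' the family $(p_{\Lambda})$ satisfies the conditions of a projective limit, whereas you actually verify the universal property by exhibiting the factoring map $I(\gamma')=\bigcup_{n}P_{\Lambda_{n}}(\gamma')$, checking that it lands in $\Gamma$, that $p_{\Lambda}\circ I=P_{\Lambda}$, that $I$ is measurable because $\mathcal{B}(\Gamma)$ is generated by the $p_{\Lambda}^{-1}(B)$, and that $I$ is unique because the $p_{\Lambda}$ separate points of $\Gamma$. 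This is exactly the content needed to justify the phrase ``up to an isomorphism the projective limit'' and the commutative diagram, so your version buys a genuinely self-contained proof at the cost of a page of bookkeeping, while the paper's version outsources the universal property to the reader (or to the standard construction of the projective limit inside the product, to which $\Gamma$ must then be compared). One remark: the ``main technical point'' you flag at the end is not actually an issue, because by the paper's definition the elements of $\Gamma_{\Lambda}$ \emph{are} subsets of $\mathbb{R}^{d}$ contained in $\Lambda$, and since $\Lambda\in\mathcal{B}_{c}(\mathbb{R}^{d})$ they are automatically finite; so no identification is required and $\bigcup_{n}P_{\Lambda_{n}}(\gamma')$ is a bona fide union of increasing subsets of $\mathbb{R}^{d}$. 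You should also record that the exhausting sequence can be taken to be the open balls of radius $n$, so that every $\Lambda\in\mathcal{B}_{c}(\mathbb{R}^{d})$, being bounded, is contained in some $\Lambda_{n}$; this is what makes both the local finiteness of $I(\gamma')$ and the identity $I(\gamma')\cap\Lambda=P_{\Lambda}(\gamma')$ go through.
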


\begin{proof}
It is clear from the construction above that the maps $p_{\Lambda_{1},\Lambda_{2}}$,
$\Lambda_{1},\Lambda_{2}\in\mathcal{B}_{c}(\mathbb{R}^{d})$ are measurable
and satisfies 
\[
p_{\Lambda_{1},\Lambda_{2}}\circ p_{\Lambda_{2},\Lambda_{3}}=p_{\Lambda_{1},\Lambda_{3}},\quad\Lambda_{1}\subset\Lambda_{2}\subset\Lambda_{3}\:\:\mathrm{in\:\:}\mathcal{B}_{c}(\mathbb{R}^{d}).
\]
As a result, $\left\{ (\Gamma_{\Lambda},\mathcal{B}(\Gamma_{\Lambda})),p_{\Lambda_{1},\Lambda_{2}},\mathcal{B}_{c}(\mathbb{R}^{d})\right\} $
is a projective system. On the other hand, it is easy to see from
\eqref{eq:projections} that the following relation 
\[
p_{\Lambda_{1}}=p_{\Lambda_{1},\Lambda_{2}}\circ p_{\Lambda_{2}},\quad\Lambda_{1}\subset\Lambda_{2}\:\:\mathrm{in\:\:}\mathcal{B}_{c}(\mathbb{R}^{d})
\]
holds. By definition of $\mathcal{B}(\Gamma)$, the family of maps
$p_{\Lambda}$, $\Lambda\in\mathcal{B}_{c}(\mathbb{R}^{d})$ satisfy
the conditions in a projective limit of measurable spaces. This concludes
the proof.
\end{proof}

\subsection{Fractional Poisson Measure on $\Gamma$}

\label{sec:fPm-cspace}Recall from Section \ref{sec:fPm-on-D'} the
measure $\sigma$ on the underlying measurable space $(\mathbb{R}^{d},\mathcal{B}(\mathbb{R}^{d}))$
and the product measure $\sigma^{\otimes n}$ on $((\mathbb{R}^{d})^{n},\mathcal{B}((\mathbb{R}^{d})^{n}))$,
for each $n\in\mathbb{N}$. Then $\sigma^{\otimes n}((\mathbb{R}^{d})^{n}\setminus\widetilde{(\mathbb{R}^{d})^{n}})=0$,
since $\sigma$ is non-atomic. It follows that $\sigma^{\otimes n}(B{}^{n}\setminus\widetilde{B^{n}})=0$,
for every $B\in\mathcal{B}(\mathbb{R}^{d})$. For each $\Lambda\in\mathcal{B}_{c}(\mathbb{R}^{d})$,
let us consider the restriction of $\sigma^{\otimes n}$ to $(\widetilde{\Lambda^{n}},\mathcal{B}(\widetilde{\Lambda^{n}}))$,
which is a finite measure, and then define the image measure $\sigma_{\Lambda}^{(n)}$
on $(\Gamma_{\Lambda}^{(n)},\mathcal{B}(\Gamma_{\Lambda}^{(n)}))$
under the mapping $\mathrm{sym}_{\Lambda}^{n}$ by
\[
\sigma_{\Lambda}^{(n)}:=\sigma^{\otimes n}\circ(\mathrm{sym}_{\Lambda}^{n})^{-1}.
\]
For $n=0$, we set $\sigma_{\Lambda}^{(0)}(\left\{ \emptyset\right\} ):=1$.
Now, for each $0<\beta<1$, one may define a probability measure $\pi_{\sigma,\Lambda}^{\beta}$
on $(\Gamma_{\Lambda},\mathcal{B}(\Gamma_{\Lambda}))$ by
\begin{equation}
\pi_{\sigma,\Lambda}^{\beta}:=\sum_{n=0}^{\infty}\frac{E_{\beta}^{(n)}(-\sigma(\Lambda))}{n!}\sigma_{\Lambda}^{(n)}.\label{eq:fPm-on-cs}
\end{equation}
Note that $E_{\beta}^{(n)}(-\sigma(\Lambda))\geq0$, $n\in\mathbb{N}_{0}$
due to \eqref{eq:monotonicity-Mittag}. In addition, we have 
\begin{align*}
\pi_{\sigma,\Lambda}^{\beta}(\Gamma_{\Lambda}) & =\pi_{\sigma,\Lambda}^{\beta}\left(\bigsqcup_{n=0}^{\infty}\Gamma_{\Lambda}^{(n)}\right)=\sum_{n=0}^{\infty}\frac{E_{\beta}^{(n)}(-\sigma(\Lambda))}{n!}\sigma_{\Lambda}^{(n)}(\Gamma_{\Lambda}^{(n)})\\
 & =\sum_{n=0}^{\infty}\frac{E_{\beta}^{(n)}(-\sigma(\Lambda))}{n!}\sigma(\Lambda)^{n}=E_{\beta}(0)=1.
\end{align*}
The family $\{\pi_{\sigma,\Lambda}^{\beta}\mid\Lambda\in\mathcal{B}_{c}(\mathbb{R}^{d})\}$
of probability measures yields a probability measure on $(\Gamma,\mathcal{B}(\Gamma))$.
In fact, this family is consistent in the sense that the measure $\pi_{\sigma,\Lambda_{1}}^{\beta}$
is the image measure of $\pi_{\sigma,\Lambda_{2}}^{\beta}$ under
$p_{\Lambda_{1},\Lambda_{2}}$, that is, 
\[
\pi_{\sigma,\Lambda_{1}}^{\beta}=\pi_{\sigma,\Lambda_{2}}^{\beta}\circ p_{\Lambda_{1},\Lambda_{2}}^{-1},\quad\forall\Lambda_{1},\Lambda_{2}\in\mathcal{B}_{c}(\mathbb{R}^{d}),\Lambda_{1}\subset\Lambda_{2}.
\]
By the Kolmogorov extension theorem on configuration space (see Appendix~\ref{sec:Kolmogorov-extension-theorem-on-config-space})
the family $\big\{\pi_{\sigma,\Lambda}^{\beta}\mid\Lambda\in\mathcal{B}_{c}(\mathbb{R}^{d})\big\}$
determines uniquely a measure $\pi_{\sigma}^{\beta}$ on $(\Gamma,\mathcal{B}(\Gamma))$
such that
\[
\pi_{\sigma,\Lambda}^{\beta}=\pi_{\sigma}^{\beta}\circ p_{\Lambda}^{-1},\quad\forall\Lambda\in\mathcal{B}_{c}(\mathbb{R}^{d}).
\]
Actually, we don't need the whole family of local sets $\mathcal{B}_{c}(\mathbb{R}^{d})$
instead, a sub-family $\mathcal{J}_{\mathbb{R}^{d}}$ with an abstract
concept of local sets, see (I1)--(I3) in Appendix \ref{sec:Kolmogorov-extension-theorem-on-config-space}.

Let us now compute the characteristic functional of the measure $\pi_{\sigma}^{\beta}$.
Given $\varphi\in\mathcal{D}$, we have $\mathrm{supp\,}\varphi\subset\Lambda$
for some $\Lambda\in\mathcal{B}_{c}(\mathbb{R}^{d})$, such that
\[
\langle\gamma,\varphi\rangle=\langle p_{\Lambda}(\gamma),\varphi\rangle,\hspace{0.1in}\forall~\gamma\in\Gamma.
\]
Thus, 
\[
\int_{\Gamma}\mathrm{e}^{\mathrm{i}\langle\gamma,\varphi\rangle}\mathrm{d}\pi_{\sigma}^{\beta}(\gamma)=\int_{\Gamma_{\Lambda}}\mathrm{e}^{\mathrm{i}\langle\gamma,\varphi\rangle}\mathrm{d}\pi_{\sigma,\Lambda}^{\beta}(\gamma)
\]
and the infinite divisibility \eqref{eq:fPm-on-cs} of the measure
$\pi_{\sigma,\Lambda}^{\beta}$ yields for the right-hand side of
the above equality
\[
\sum_{n=0}^{\infty}\frac{E_{\beta}^{(n)}(-\sigma(\Lambda))}{n!}\int_{\Lambda^{n}}\mathrm{e}^{\mathrm{i}(\varphi(x_{1})+\dots+\varphi(x_{n}))}\,\mathrm{d}\sigma^{\otimes n}(x_{1},\dots,x_{n})=\sum_{n=0}^{\infty}\frac{E_{\beta}^{(n)}(-\sigma(\Lambda))}{n!}\left(\int_{\Lambda}\mathrm{e}^{\mathrm{i}\varphi(x)}\,\mathrm{d}\sigma(x)\right)^{n}
\]
which corresponds to the Taylor expansion of the function 
\[
E_{\beta}\left(\int_{\Lambda}(\mathrm{e}^{\mathrm{i}\varphi(x)}-1)\,\mathrm{d}\sigma(x)\right)=E_{\beta}\left(\int_{\mathbb{R}^{d}}(\mathrm{e}^{\mathrm{i}\varphi(x)}-1)\,\mathrm{d}\sigma(x)\right).
\]
Hence, for any $\varphi\in\mathcal{D}$, we obtain 
\begin{equation}
\int_{\Gamma}\mathrm{e}^{\mathrm{i}\langle\gamma,\varphi\rangle}\,\mathrm{d}\pi_{\sigma}^{\beta}(\gamma)=E_{\beta}\left(\int_{\mathbb{R}^{d}}(\mathrm{e}^{\mathrm{i}\varphi(x)}-1)\,\mathrm{d}\sigma(x)\right).\label{eq:characteristic-function-Gamma}
\end{equation}

\begin{rem}
\label{rem:fPm-Gamma-Dprime}
\begin{enumerate}
\item The characteristic functional of the measure $\pi_{\sigma}^{\beta}$
given in \eqref{eq:characteristic-function-Gamma} coincides with
the characteristic functional \eqref{eq:CF-fPm-inf} of the measure
$\pi_{\sigma}^{\beta}$ on the distribution space $\mathcal{D}'$.
The functional \eqref{eq:characteristic-function-Gamma} shows that
the measure $\pi_{\sigma}^{\beta}$ is supported on generalized functions
of the form $\sum_{x\in\gamma}\delta_{x}\in\mathcal{D}',$ $\gamma\in\Gamma$.
\item Note that $\Gamma\subset\mathcal{D}'$ but in contrast to $\Gamma,$
$\mathcal{D}'$ is a linear space. Since $\pi_{\sigma}^{\beta}(\Gamma)=1$,
the measure space $(\mathcal{D}',\mathcal{C}_{\sigma}(\mathcal{D}'),\pi_{\sigma}^{\beta})$
can, in this way, be regarded as a linear extension of the fractional
Poisson space $(\Gamma,\mathcal{B}(\Gamma),\pi_{\sigma}^{\beta}).$
\end{enumerate}
\end{rem}

\section{Generalized Appell System}

\label{sec:Appell-System}In this section we introduce the generalized
Appell system associated with the fPm $\pi_{\sigma}^{\beta}$. First
we consider the analytic continuation of the characteristic functional
$C_{\pi_{\sigma}^{\beta}}$ to $\mathcal{D}_{\mathbb{C}}:=\mathcal{D}\oplus\mathrm{i}\mathcal{D}$.
By definition, an element $\varphi\in\mathcal{D}_{\mathbb{C}}$ decomposes
into $\varphi=\varphi_{1}+\mathrm{i}\varphi_{2}$, $\varphi_{1},\varphi_{2}\in\mathcal{D}$.
Hence, computing $C_{\pi_{\sigma}^{\beta}}(-\mathrm{i}\varphi)$,
$\varphi\in\mathcal{D}$, yields the Laplace transform of the measure
$\pi_{\sigma}^{\beta}$, that is, 
\[
l_{\pi_{\sigma}^{\beta}}(\varphi):=C_{\pi_{\sigma}^{\beta}}(-\mathrm{i}\varphi)=E_{\beta}\left(\int_{\mathbb{R}^{d}}(\mathrm{e}^{\varphi(x)}-1)\,\mathrm{d}\sigma(x)\right).
\]
In particular, choosing $\beta=1$ we obtain the Laplace transform
of the classical Poisson measure $\pi_{\sigma}:=\pi_{\sigma}^{1}$
with intensity $\sigma$ on the configuration space $\Gamma$. For
more details, we refer to \cite{GV68,KMM78,I88,IK88,AKR97a} and reference
therein.

The following two properties are satisfied by the fPm $\pi_{\sigma}^{\beta}$,
$0<\beta\leq1$.
\begin{description}
\item [{(A1)}] The measure $\pi_{\sigma}^{\beta}$ has an analytic Laplace
transform in a neighborhood of zero, that is, the mapping
\[
\mathcal{D_{\mathbb{C}}}\ni\varphi\mapsto l_{\pi_{\sigma}^{\beta}}(\varphi)=\int_{\mathcal{D}'}\mathrm{e}^{\langle w,\varphi\rangle}\,\mathrm{d}\pi_{\sigma}^{\beta}(w)=E_{\beta}\left(\int_{\mathbb{R}^{d}}(\mathrm{e}^{\varphi(x)}-1)\,\mathrm{d}\sigma(x)\right)\in\mathbb{C}
\]
is holomorphic in a neighborhood $\mathcal{U}\subset\mathcal{D}_{\mathbb{C}}$
of zero.
\item [{(A2)}] For any nonempty open subset $\mathcal{U}\subset\mathcal{D}'$
it should hold that $\pi_{\sigma}^{\beta}(\mathcal{U})>0$.
\end{description}
The assumption (A1) guarantees the existence of the moments of all
order of the measure $\pi_{\sigma}^{\beta}$ while (A2) guarantees
the embedding of the test function space on $L^{2}(\pi_{\sigma}^{\beta})$,
see e.g., Section~3 in \cite{KaKo99}. In addition, the Laplace transform
$l_{\pi_{\sigma}^{\beta}}(\varphi)$ of the measure $\pi_{\sigma}^{\beta}$
has the decomposition in terms of the moment kernels $M_{n}^{\sigma,\beta}$
(by the kernel theorem) given by
\begin{equation}
l_{\pi_{\sigma}^{\beta}}(\varphi)=\sum_{n=0}^{\infty}\frac{1}{n!}\langle M_{n}^{\sigma,\beta},\varphi^{\otimes n}\rangle,\qquad\varphi\in\mathcal{D}_{\mathbb{C}},\,M_{n}^{\sigma,\beta}\in\big(\mathcal{D}_{\mathbb{C}}^{^{\hat{\otimes}n}}\big)'.\label{eq:Laplace-transform-inf-dim}
\end{equation}

\subsection{Generalized Appell Polynomials}

\label{sec:Generalized-Appell-Polynomials}In this subsection we follow
\cite{KSWY95} to introduce the system of Appell polynomials associated
with the fPm $\pi_{\sigma}^{\beta}$. Let us consider the triple \eqref{eq:triple-L2sigma}
such that 
\begin{equation}
\mathcal{D}\subset\mathcal{N}\subset L^{2}(\sigma)\subset\mathcal{N}'\subset\mathcal{D}'\label{eq:triple-L2sigma-with-N}
\end{equation}
as described in Section~\ref{sec:Nuclear-spaces}. Also, the chain
\eqref{eq:triple-L2sigma-with-N} holds for the tensor product of
these spaces.

Then we introduce the normalized exponential $\mathrm{e}_{\pi_{\sigma}^{\beta}}(\varphi;z)$
by 
\begin{equation}
\mathrm{e}_{\pi_{\sigma}^{\beta}}(\varphi;w)=\frac{\mathrm{e}^{\langle w,\varphi\rangle}}{l_{\pi_{\sigma}^{\beta}}(\varphi)},\quad w\in\mathcal{D}'_{\mathbb{C}},\,\varphi\in\mathcal{D}_{\mathbb{C}}.\label{eq:normalized-expo-inf-dim}
\end{equation}
Since $l_{\pi_{\sigma}^{\beta}}(0)=1$ and $l_{\pi_{\sigma}^{\beta}}$
is holomorphic, there exists a neighborhood $\mathcal{U}_{0}\subset\mathcal{D}_{\mathbb{C}}$
of zero, such that $l_{\pi_{\sigma}^{\beta}}(\varphi)\neq0$ for all
$\varphi\in\mathcal{U}_{0}$. For $\varphi\in\mathcal{U}_{0}$, the
normalized exponential $\mathrm{e}_{\pi_{\sigma}^{\beta}}(\varphi;z)$
can be expanded in a power series and then we use the polarization
identity in order to apply the kernel theorem to obtain
\begin{equation}
\mathrm{e}_{\pi_{\sigma}^{\beta}}(\varphi;w)=\sum_{n=0}^{\infty}\frac{1}{n!}\langle P_{n}^{\sigma,\beta}(w),\varphi^{\otimes n}\rangle,\quad w\in\mathcal{D}'_{\mathbb{C}},\;\varphi\in\mathcal{U}_{0},\label{eq:Wick-Appell-inf-dim}
\end{equation}
for suitable $P_{n}^{\sigma,\beta}(w)\in(\mathcal{D}_{\mathbb{C}}^{\hat{\otimes}n})'$.
The family 
\begin{equation}
\mathbb{P}^{\sigma,\beta}=\left\{ \langle P_{n}^{\sigma,\beta}(\cdot),\varphi^{(n)}\rangle\mid\varphi^{(n)}\in\mathcal{D}_{\mathbb{C}}^{\hat{\otimes}n},n\in\mathbb{N}_{0}\right\} \label{eq:P-system}
\end{equation}
is called the \emph{Appell system }associated to the fPm\emph{ $\pi_{\sigma}^{\beta}$}.
Let us now consider the transformation $\alpha:\mathcal{D}_{\mathbb{C}}\longrightarrow\mathcal{D}_{\mathbb{C}}$
defined on a neighborhood $\mathcal{U}_{\alpha}\subset\mathcal{D}_{\mathbb{C}}$
of zero, by 
\[
\alpha(\varphi)(x)=\log(1+\varphi(x)),\quad\varphi\in\mathcal{U}_{\alpha},\:x\in\mathbb{R}^{d}.
\]
Note that for $\varphi=0\in\mathcal{D_{\mathbb{C}}}$, we have $\alpha(\varphi)=0$.
Also, $\mathcal{U}_{\alpha}$ is chosen in such a way that $\alpha$
is invertible and holomorphic on $\mathcal{U}_{\alpha}$. Then $\alpha$
can be expanded as 
\begin{equation}
\alpha(\varphi)=\sum_{n=1}^{\infty}\frac{1}{n!}\widehat{\mathrm{d}^{n}\alpha(0)}(\varphi)=\sum_{n=1}^{\infty}\frac{(-1)^{n+1}\varphi^{n}}{n},\label{eq:alpha-Taylor}
\end{equation}
where 
\[
\widehat{\mathrm{d}^{n}\alpha(0)}(\varphi)=\frac{\partial^{n}}{\mathrm{\partial}t_{1}\dots\mathrm{\partial}t_{n}}\alpha(t_{1}\varphi+\dots+t_{n}\varphi)|_{t_{1}=\dots=t_{n}=0}
\]
for all $n\in\mathbb{N}$. For the inverse function $g_{\alpha}$
of $\alpha$, we have 
\[
(g_{\alpha}\varphi)(x)=\mathrm{e}^{\varphi(x)}-1,\qquad\varphi\in\mathcal{V}_{\alpha}\subset\mathcal{D}_{\mathbb{C}},\:x\in\mathbb{R}^{d}
\]
for some neighborhood $\mathcal{V}_{\alpha}$ of zero in $\mathcal{D}_{\mathbb{C}}$.
A similar procedure as before yields the decomposition 
\begin{equation}
g_{\alpha}(\varphi)=\sum_{n=1}^{\infty}\frac{1}{n!}\widehat{\mathrm{d}^{n}g_{\alpha}(0)}(\varphi)=\sum_{n=1}^{\infty}\frac{\varphi^{n}}{n!}.\label{eq:g-decomposition}
\end{equation}
Now using the function $\alpha$, we introduce the modified normalized
exponential $\mathrm{e}_{\pi_{\sigma}^{\beta}}(\alpha(\varphi);x)$
as 
\begin{equation}
\mathrm{e}_{\pi_{\sigma}^{\beta}}(\alpha(\varphi);w):=\frac{\exp(\langle w,\alpha(\varphi)\rangle)}{l_{\pi_{\sigma}^{\beta}}(\alpha(\varphi))}=\frac{\exp(\langle w,\log(1+\varphi)\rangle)}{E_{\beta}\left(\int_{\mathbb{R}^{d}}\varphi(x)\mathrm{\,d}\sigma(x)\right)}=\frac{\exp(\langle w,\log(1+\varphi)\rangle)}{E_{\beta}\left(\langle\varphi\rangle_{\sigma}\right)}\label{eq:normalized-expo-inf-dim-alpha}
\end{equation}
for $\varphi\in\mathcal{U}'_{\alpha}\subset\mathcal{U_{\alpha}}$,
$w\in\mathcal{D}_{\mathbb{C}}^{'}$. Since $l_{\pi_{\sigma}^{\beta}}$
is holomorphic on a neighborhood of zero, for each fixed $w\in\mathcal{D}'_{\mathbb{C}}$,
$\mathrm{e}_{\pi_{\sigma}^{\beta}}(\alpha(\cdot);w)$ is a holomorphic
function on some neighborhood $\mathcal{U}'_{\alpha}\subset\mathcal{U}_{\alpha}$
of zero. Then we have the map $\mathcal{D}_{\mathbb{C}}\ni\varphi\mapsto\mathrm{e}_{\pi_{\sigma}^{\beta}}(\alpha(\varphi);w)$
which admits a power series
\begin{equation}
\mathrm{e}_{\pi_{\sigma}^{\beta}}(\alpha(\varphi);w)=\sum_{n=0}^{\infty}\frac{1}{n!}\langle C_{n}^{\sigma,\beta}(w),\varphi^{\otimes n}\rangle,\quad\varphi\in\mathcal{U}'_{\alpha},\;w\in\mathcal{D}_{\mathbb{C}}^{'},\label{eq:normalized-exp-fPm-inf-dim}
\end{equation}
where the kernels $C_{n}^{\sigma,\beta}:\mathcal{D}'_{\mathbb{C}}\rightarrow(\mathcal{D}_{\mathbb{C}}^{^{\hat{\otimes}n}})'$,
$n\in\mathbb{N}$, $C_{0}^{\sigma,\beta}=1$. By Equation \eqref{eq:normalized-exp-fPm-inf-dim},
it follows that for any $\varphi^{(n)}\in\mathcal{D}_{\mathbb{C}}^{\hat{\otimes}n}$,
$n\in\mathbb{N}_{0}$, the function 
\[
\mathcal{D}'_{\mathbb{C}}\ni w\mapsto\langle C_{n}^{\sigma,\beta}(w),\varphi^{(n)}\rangle
\]
is a polynomial of order $n$ on $\mathcal{D}'_{\mathbb{C}}$.
\begin{defn}
The family 
\[
\mathbb{P}^{\sigma,\beta,\alpha}=\left\{ \langle C_{n}^{\sigma,\beta}(\cdot),\varphi^{(n)}\rangle\mid\varphi^{(n)}\in\mathcal{D}_{\mathbb{C}}^{\hat{\otimes}n},n\in\mathbb{N}_{0}\right\} 
\]
is called the \emph{generalized Appell system }associated to the fPm\emph{
$\pi_{\sigma}^{\beta}$ }or the\emph{ $\mathbb{P}^{\sigma,\beta,\alpha}$-system.}

In the following proposition we collect some properties of the kernels
$C_{n}^{\sigma,\beta}(\cdot)$ which appeared in \cite{KdSS98} but
specific to the measure \emph{$\pi_{\sigma}^{\beta}$}.
\end{defn}

\begin{prop}
\label{prop:generalized-appell-polynomials-inf-dim}For $z,w\in\mathcal{D}_{\mathbb{C}}'$,
$n\in\mathbb{N}_{0}$, the following properties hold
\begin{description}
\item [{(P1)}] $C_{n}^{\sigma,\beta}(w)=\sum_{m=0}^{n}\mathbf{s}(n,m)^{*}P_{m}^{\sigma,\beta}(w)$,
where $\mathbf{s}(n,m)$ is the Stirling operator of the first kind
defined in \eqref{eq:Stirling-first-D-property} in Appendix~\ref{sec:Stirling-Operators}.
\item [{(P2)}] $w^{\otimes n}=\sum_{k=0}^{n}\sum_{m=0}^{k}\binom{n}{k}\mathbf{S}(k,m)^{*}C_{m}^{\sigma,\beta}(w)\hat{\otimes}M_{n-k}^{\sigma,\beta}$,
where $\mathbf{S}(n,m)$ is the Stirling operator of the second kind
defined in \eqref{eq:Stirling-second-D-property} in Appendix~\ref{sec:Stirling-Operators}
and $M_{n}^{\sigma,\beta}\in(\mathcal{D}_{\mathbb{C}}^{^{\hat{\otimes}n}})'$
are the moment kernels of $\pi_{\sigma}^{\beta}$ given in \eqref{eq:Laplace-transform-inf-dim}.
\item [{(P3)}] \emph{$C_{n}^{\sigma,\beta}(z+w)=\sum_{k+l+m=n}\frac{n!}{k!l!m!}C_{k}^{\sigma,\beta}(z)\hat{\otimes}C_{l}^{\sigma,\beta}(w)\hat{\otimes}M_{m}^{\sigma,\beta,\alpha}$,
}where $M_{m}^{\sigma,\beta,\alpha}\in(\mathcal{D}_{\mathbb{C}}^{^{\hat{\otimes}n}})'$
is determined by 
\begin{equation}
l_{\pi_{\sigma}^{\beta}}(\alpha(\varphi))=\sum_{m=0}^{\infty}\frac{1}{m!}\langle M_{m}^{\sigma,\beta,\alpha},\varphi^{\otimes m}\rangle,\quad\varphi\in\mathcal{D}_{\mathbb{C}}.\label{eq:moments-alpha}
\end{equation}
\item [{(P4)}] \emph{$C_{n}^{\sigma,\beta}(z+w)=\sum_{k=0}^{n}\binom{n}{k}C_{k}^{\sigma,\beta}(z)\hat{\otimes}(w)_{n-k}$,
}where $(w)_{n}$ is the falling factorial on $\mathcal{D}'_{\mathbb{C}}$
determined by \eqref{eq:generating-function-f-factorial-inf-dim}.
\item [{(P5)}] $C_{n}^{\sigma,\beta}(w)=\sum_{k=0}^{n}\binom{n}{k}\sum_{m=0}^{n-k}C_{k}^{\sigma,\beta}(0)\hat{\otimes}\big(\mathbf{s}(n-k,m)^{*}w^{\otimes m}\big)$.
\item [{(P6)}] \emph{$\mathbb{E}_{\pi_{\sigma}^{\beta}}(\langle C_{n}^{\sigma,\beta}(\cdot),\varphi^{(n)}\rangle)=\delta_{n,0}$,
}where $\varphi^{(n)}\in\mathcal{D}_{\mathbb{C}}^{\hat{\otimes}n}$,
$\delta_{n,k}$ is the Kronecker delta function and $\mathbb{E}_{\pi_{\sigma}^{\beta}}(\cdot)$
is the expectation with respect to the measure \emph{$\pi_{\sigma}^{\beta}$.}
\item [{(P7)}] For all $p'>p$ such that the embedding $\mathcal{H}_{p'}\hookrightarrow\mathcal{H}_{p}$
is a Hilbert-Schmidt operator and for all $\varepsilon>0$ there exist
$C_{\varepsilon}>0$ such that 
\[
|C_{n}^{\sigma,\beta}(w)|_{-p'}\leq C_{\varepsilon}n!\varepsilon^{-n}\exp(\varepsilon|w|_{-p}),\quad w\in\mathcal{H}_{-p',\mathbb{C}},\,n\in\mathbb{N}_{0}.
\]
\end{description}
\end{prop}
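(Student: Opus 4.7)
The plan is to prove all seven properties from a single generating function identity \eqref{eq:normalized-exp-fPm-inf-dim}, manipulating it in different ways and then matching coefficients of $\varphi^{\otimes n}$. Properties (P1)--(P6) are purely algebraic consequences of that identity together with the defining Taylor expansions of $\alpha$, of $g_\alpha(\varphi)=\mathrm{e}^{\varphi}-1$, of the Laplace transform $l_{\pi_{\sigma}^{\beta}}$, and of the falling factorials, while (P7) is an analytic estimate that uses the holomorphy assumption (A1) plus Cauchy's inequality on polydiscs in the complexified nuclear scale. Throughout, I will use the Stirling operator identities from Appendix \ref{sec:Stirling-Operators}, which I expect to take the form $(\alpha(\varphi))^{\hat\otimes m}=\sum_{n\ge m}\frac{m!}{n!}\mathbf{s}(n,m)\varphi^{\otimes n}$ and $(g_\alpha(\varphi))^{\hat\otimes m}=\sum_{n\ge m}\frac{m!}{n!}\mathbf{S}(n,m)\varphi^{\otimes n}$, together with $(w)_{n}=\sum_{m=0}^{n}\mathbf{s}(n,m)^{*}w^{\otimes m}$ coming from \eqref{eq:generating-function-f-factorial-inf-dim}.

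For (P1) I substitute $\psi=\alpha(\varphi)$ into the $\mathbb{P}^{\sigma,\beta}$ expansion \eqref{eq:Wick-Appell-inf-dim}, obtaining $\mathrm{e}_{\pi_{\sigma}^{\beta}}(\alpha(\varphi);w)=\sum_{m}\frac{1}{m!}\langle P_{m}^{\sigma,\beta}(w),\alpha(\varphi)^{\otimes m}\rangle$, then expand each $\alpha(\varphi)^{\otimes m}$ via the Stirling-first-kind identity and push $\mathbf{s}(n,m)$ onto the kernel by duality; comparing the $\varphi^{\otimes n}$ coefficient with \eqref{eq:normalized-exp-fPm-inf-dim} yields (P1). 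For (P2) I instead invert: write $\exp(\langle w,\psi\rangle)=l_{\pi_{\sigma}^{\beta}}(\psi)\,\mathrm{e}_{\pi_{\sigma}^{\beta}}(\psi;w)$, apply it with $\psi$ replaced by $\alpha^{-1}(\psi)=g_\alpha^{-1}$ trick—more directly, substitute $\varphi\mapsto g_\alpha(\varphi)=\mathrm{e}^{\varphi}-1$ in \eqref{eq:normalized-exp-fPm-inf-dim} so that $\alpha(g_\alpha(\varphi))=\varphi$ and $\exp(\langle w,\varphi\rangle)=l_{\pi_{\sigma}^{\beta}}(\varphi)\sum_{m}\frac{1}{m!}\langle C_{m}^{\sigma,\beta}(w),(\mathrm{e}^{\varphi}-1)^{\otimes m}\rangle$. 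Substituting the Stirling-second-kind expansion for $(\mathrm{e}^{\varphi}-1)^{\otimes m}$ and the moment expansion \eqref{eq:Laplace-transform-inf-dim}, forming the Cauchy product, and matching the coefficient of $\varphi^{\otimes n}$ in $\sum_n \frac{1}{n!}\langle w^{\otimes n},\varphi^{\otimes n}\rangle$ produces exactly (P2).

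For (P3) I factor the generating function
\[
\mathrm{e}_{\pi_{\sigma}^{\beta}}(\alpha(\varphi);z+w)=\mathrm{e}_{\pi_{\sigma}^{\beta}}(\alpha(\varphi);z)\cdot\mathrm{e}_{\pi_{\sigma}^{\beta}}(\alpha(\varphi);w)\cdot l_{\pi_{\sigma}^{\beta}}(\alpha(\varphi))
\]
and expand the three factors using \eqref{eq:normalized-exp-fPm-inf-dim} and \eqref{eq:moments-alpha}; the triple Cauchy product in the variable $\varphi$ gives precisely the $k{+}l{+}m=n$ sum in (P3). For (P4) I factor differently: $\mathrm{e}_{\pi_{\sigma}^{\beta}}(\alpha(\varphi);z+w)=\mathrm{e}_{\pi_{\sigma}^{\beta}}(\alpha(\varphi);z)\cdot\exp(\langle w,\alpha(\varphi)\rangle)$, recognizing that $\exp(\langle w,\alpha(\varphi)\rangle)=\sum_n\frac{1}{n!}\langle (w)_n,\varphi^{\otimes n}\rangle$ is the generating function of the falling factorials (from \eqref{eq:generating-function-f-factorial-inf-dim}); a single Cauchy product gives (P4). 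Then (P5) follows by specializing (P4) at $z=0$ and substituting $(w)_{n-k}=\sum_{m=0}^{n-k}\mathbf{s}(n-k,m)^{*}w^{\otimes m}$. Property (P6) is obtained by integrating \eqref{eq:normalized-exp-fPm-inf-dim} against $\pi_{\sigma}^{\beta}$: by Fubini and the very definition of $l_{\pi_{\sigma}^{\beta}}(\alpha(\varphi))$, the left side equals $1$, and comparing the $\varphi^{\otimes n}$ coefficient yields $\mathbb{E}_{\pi_{\sigma}^{\beta}}(\langle C_n^{\sigma,\beta}(\cdot),\varphi^{(n)}\rangle)=\delta_{n,0}$.

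The main obstacle is (P7), which is analytic rather than combinatorial. I will use (A1) to produce $p\in\mathbb{N}$ and $r>0$ such that $l_{\pi_{\sigma}^{\beta}}(\alpha(\varphi))$ is holomorphic and bounded below in modulus by some $\delta>0$ on $\{\varphi\in\mathcal{H}_{p,\mathbb{C}}:|\varphi|_p\le r\}$. Choosing $p'>p$ so that $\mathcal{H}_{p'}\hookrightarrow\mathcal{H}_p$ is Hilbert-Schmidt gives the bound $|\alpha(\varphi)|_p\le c|\varphi|_{p'}$ on a smaller ball, so that for $|\varphi|_{p'}$ of order $\varepsilon$ we obtain the uniform estimate
\[
\bigl|\mathrm{e}_{\pi_{\sigma}^{\beta}}(\alpha(\varphi);w)\bigr|\le\delta^{-1}\exp\bigl(|w|_{-p}\,|\alpha(\varphi)|_p\bigr)\le C_{\varepsilon}\exp(\varepsilon|w|_{-p}).
\]
Since $\varphi\mapsto\mathrm{e}_{\pi_{\sigma}^{\beta}}(\alpha(\varphi);w)$ is holomorphic in a neighbourhood of $0$ in $\mathcal{H}_{p',\mathbb{C}}$ with expansion \eqref{eq:normalized-exp-fPm-inf-dim}, the Cauchy inequality on the polydisc of radius proportional to $\varepsilon$ yields $|C_n^{\sigma,\beta}(w)|_{-p'}\le C_{\varepsilon}n!\,\varepsilon^{-n}\exp(\varepsilon|w|_{-p})$, giving (P7). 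The technical obstacle is the choice of parameters: one must coordinate the holomorphy radius coming from (A1), the Hilbert-Schmidt embedding constant controlling $|\alpha(\varphi)|_p$ in terms of $|\varphi|_{p'}$, and the uniform lower bound on $|l_{\pi_{\sigma}^{\beta}}(\alpha(\varphi))|$ so that the resulting constant $C_\varepsilon$ depends only on $\varepsilon$ and not on $w$ or $n$.
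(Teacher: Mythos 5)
Your proposal is correct and follows essentially the same route as the paper: (P1)--(P6) by manipulating the generating function $\mathrm{e}_{\pi_{\sigma}^{\beta}}(\alpha(\varphi);\cdot)$, substituting the Stirling-operator expansions of $\alpha(\varphi)^{\otimes m}$ and $g_{\alpha}(\varphi)^{\otimes m}$ and the falling-factorial generating function, and comparing coefficients of $\varphi^{\otimes n}$; and (P7) by a Cauchy estimate on the holomorphic generating function combined with a lower bound on $|l_{\pi_{\sigma}^{\beta}}(\alpha(\varphi))|$ and the Hilbert--Schmidt embedding. The only cosmetic difference is in (P2), where you collapse into one substitution what the paper does in two steps (first deriving $P_{n}^{\sigma,\beta}=\sum_{m}\mathbf{S}(n,m)^{*}C_{m}^{\sigma,\beta}$, then combining with the moment expansion), but the computation is the same.
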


\begin{proof}
(P1) In view of Equation~\eqref{eq:Wick-Appell-inf-dim}, we have
\begin{equation}
\mathrm{e}_{\pi_{\sigma}^{\beta}}(\alpha(\varphi);w)=\frac{\exp(\langle w,\alpha(\varphi)\rangle)}{l_{\pi_{\sigma}^{\beta}}(\alpha(\varphi))}=\sum_{m=0}^{\infty}\frac{1}{m!}\langle P_{m}^{\sigma,\beta}(w),\alpha(\varphi)^{\otimes m}\rangle.\label{eq:normalized-exp-P-alpha}
\end{equation}
Using Equation \eqref{eq:decomposition-alpha^k}, we obtain 
\begin{align*}
\mathrm{e}_{\pi_{\sigma}^{\beta}}(\alpha(\varphi);w) & =\sum_{m=0}^{\infty}\Bigg\langle P_{m}^{\sigma,\beta}(w),\sum_{n=m}^{\infty}\frac{1}{n!}\mathbf{s}(n,m)\varphi^{\otimes n}\Bigg\rangle\\
 & =\sum_{m=0}^{\infty}\sum_{n=m}^{\infty}\frac{1}{n!}\Bigg\langle\mathbf{s}(n,m)^{*}P_{m}^{\sigma,\beta}(w),\varphi^{\otimes n}\Bigg\rangle\\
 & =\sum_{n=0}^{\infty}\frac{1}{n!}\Bigg\langle\sum_{m=0}^{n}\mathbf{s}(n,m)^{*}P_{m}^{\sigma,\beta}(w),\varphi^{\otimes n}\Bigg\rangle.
\end{align*}
On the other hand, using the equality \eqref{eq:normalized-exp-fPm-inf-dim}
and comparing both series for $\mathrm{e}_{\pi_{\sigma}^{\beta}}(\alpha(\varphi),w)$
gives 
\[
C_{n}^{\sigma,\beta}(w)=\sum_{m=1}^{n}\mathbf{s}(n,m)^{*}P_{m}^{\sigma,\beta}(w).
\]

\noindent (P2) Similar as in the proof of (P1), we use Equation \eqref{eq:normalized-exp-fPm-inf-dim}
and the fact that $g_{\alpha}$ is the inverse of $\alpha$ to obtain
\begin{equation}
\mathrm{e}_{\pi_{\sigma}^{\beta}}(\varphi;w)=\sum_{m=0}^{\infty}\frac{1}{m!}\langle C_{m}^{\sigma,\beta}(w),g_{\alpha}(\varphi)^{\otimes m}\rangle.\label{eq:normalized-exp-C-g}
\end{equation}
Using Equation \eqref{eq:decomposition-g-alpha^k} we replace $g_{\alpha}(\varphi)^{\otimes m}$
in the above Equation \eqref{eq:normalized-exp-C-g} and making some
standard manipulations yields
\[
\mathrm{e}_{\pi_{\sigma}^{\beta}}(\varphi;z)=\sum_{m=0}^{\infty}\Bigg\langle C_{m}^{\sigma,\beta}(w),\sum_{n=m}^{\infty}\frac{1}{n!}\mathbf{S}(n,m)\varphi^{\otimes n}\Bigg\rangle=\sum_{n=0}^{\infty}\frac{1}{n!}\Bigg\langle\sum_{m=0}^{n}\mathbf{S}(n,m)^{*}C_{m}^{\sigma,\beta}(w),\varphi^{\otimes n}\Bigg\rangle.
\]
On the other hand, comparing the above series for $\mathrm{e}_{\pi_{\sigma}^{\beta}}(\varphi,w)$
and the Equation~\eqref{eq:Wick-Appell-inf-dim}, we obtain
\begin{equation}
P_{n}^{\sigma,\beta}(w)=\sum_{m=0}^{n}\mathbf{S}(n,m)^{*}C_{m}^{\sigma,\beta}(w).\label{eq:P-C-polynomials}
\end{equation}
By Equation \eqref{eq:normalized-expo-inf-dim}, we have the equality
\begin{equation}
\mathrm{e}^{\langle w,\varphi\rangle}=\mathrm{e}_{\pi_{\sigma}^{\beta}}(\varphi;w)l_{\pi_{\sigma}^{\beta}}(\varphi).\label{eq:normalized-expo-inf-dim-Laplace}
\end{equation}
Now using the equations \eqref{eq:Wick-Appell-inf-dim} and \eqref{eq:Laplace-transform-inf-dim},
we obtain the equation 
\[
\sum_{n=0}^{\infty}\frac{1}{n!}\langle w^{\otimes n},\varphi^{\otimes n}\rangle=\sum_{n=0}^{\infty}\frac{1}{n!}\Bigg\langle\sum_{k=0}^{n}\binom{n}{k}P_{n}^{\sigma,\beta}(w)\hat{\otimes}M_{n-k}^{\sigma,\beta},\varphi^{\otimes n}\Bigg\rangle
\]
which implies that 
\begin{equation}
w^{\otimes n}=\sum_{k=0}^{n}\binom{n}{k}P_{n}^{\sigma,\beta}(w)\hat{\otimes}M_{n-k}^{\sigma,\beta}.\label{eq:z-P-M-polynomials}
\end{equation}
The claim follows by applying Equation~\eqref{eq:P-C-polynomials}
to Equation~\eqref{eq:z-P-M-polynomials}.

\noindent (P3) By definition of the modified normalized exponential,
we have 
\[
\mathrm{e}_{\pi_{\sigma}^{\beta}}(\alpha(\varphi);z+w)=\mathrm{e}_{\pi_{\sigma}^{\beta}}(\alpha(\varphi);z)\mathrm{e}_{\pi_{\sigma}^{\beta}}(\alpha(\varphi);w)l_{\pi_{\sigma}^{\beta}}(\alpha(\varphi)).
\]
For $l_{\pi_{\sigma}^{\beta}}(\alpha(\varphi))$, we use the decomposition
\eqref{eq:moments-alpha} such that the above equation yields 
\begin{align*}
\sum_{n=0}^{\infty}\frac{1}{n!}\langle C_{n}^{\sigma,\beta}(z+w),\varphi^{\otimes n}\rangle & =\sum_{k=0}^{\infty}\frac{1}{k!}\langle C_{k}^{\sigma,\beta}(z),\varphi^{\otimes k}\rangle\sum_{l=0}^{\infty}\frac{1}{l!}\langle C_{l}^{\sigma,\beta}(w),\varphi^{\otimes l}\rangle\sum_{m=0}^{\infty}\frac{1}{m!}\langle M_{m}^{\sigma,\beta,\alpha},\varphi^{\otimes m}\rangle\\
 & =\sum_{n=0}^{\infty}\frac{1}{n!}\Bigg\langle\sum_{k+l+m=n}\frac{n!}{k!l!m!}C_{k}^{\sigma,\beta}(z)\hat{\otimes}C_{l}^{\sigma,\beta}(w)\hat{\otimes}M_{m}^{\sigma,\beta,\alpha},\varphi^{\otimes n}\Bigg\rangle.
\end{align*}
Thus, the result follows by comparing the coefficients in both sides
of the equation.

\noindent (P4) Again, by definition of the modified normalized exponential,
we have 
\[
\mathrm{e}_{\pi_{\sigma}^{\beta}}(\alpha(\varphi);z+w)=\mathrm{e}_{\pi_{\sigma}^{\beta}}(\alpha(\varphi);z)\exp(\langle w,\alpha(\varphi)\rangle).
\]
By Equations \eqref{eq:generating-function-f-factorial-inf-dim} and
\eqref{eq:normalized-exp-fPm-inf-dim}, we have
\begin{align*}
\sum_{n=0}^{\infty}\frac{1}{n!}\langle C_{n}^{\sigma,\beta}(z+w),\varphi^{\otimes n}\rangle & =\sum_{k=0}^{\infty}\frac{1}{k!}\langle C_{k}^{\sigma,\beta}(z),\varphi^{\otimes k}\rangle\sum_{m=0}^{\infty}\frac{1}{m!}\langle(w)_{m},\varphi^{\otimes m}\rangle\\
 & =\sum_{n=0}^{\infty}\frac{1}{n!}\Bigg\langle\sum_{k=0}^{n}\binom{n}{k}C_{k}^{\sigma,\beta}(z)\hat{\otimes}(w)_{n-k},\varphi^{\otimes n}\Bigg\rangle.
\end{align*}
Thus the assertion follows immediately by comparing the coefficients
in both sides of the equation.

\noindent (P5) The result follows from (P4) at $z=0$ and \eqref{eq:falling-factorial-Stirling}.

\noindent (P6) Note that for $\varphi\in\mathcal{D}_{\mathbb{C}}$,
we have 
\[
\sum_{n=0}^{\infty}\frac{1}{n!}\mathbb{E_{\pi_{\sigma}^{\beta}}}(\langle C_{n}^{\sigma,\beta}(w),\varphi^{\otimes n}\rangle)=\mathbb{E}_{\pi_{\sigma}^{\beta}}(\mathrm{e}_{\pi_{\sigma}^{\beta}}(\alpha(\varphi);w))=\frac{\mathbb{E}_{\pi_{\sigma}^{\beta}}(\exp(\langle\cdot,\alpha(\varphi)\rangle))}{l_{\pi_{\sigma}^{\beta}}(\alpha(\varphi))}=1.
\]
By polarization identity and comparison of coefficients, we obtain
the result.

\noindent (P7) Let $\varepsilon>0$ be given. Then let $C_{\varepsilon},\sigma_{\varepsilon}>0$
be chosen in such a way that $|\alpha(\varphi)|_{p}\leq\varepsilon$
and $C_{\varepsilon}\geq1/|l_{\pi_{\beta}^{\sigma}}(\alpha(\varphi))|$
for $|\varphi|_{p}=\sigma_{\varepsilon}.$ By definition of $C_{n}^{\sigma,\beta}(w)$
and the Cauchy formula, we have
\begin{align*}
|\langle C_{n}^{\sigma,\beta}(w),\varphi^{\otimes n}\rangle| & =|\widehat{\mathrm{d}^{n}\mathrm{e}_{\pi_{\sigma}^{\beta}}(0;w)}(\varphi)|\\
 & \leq n!\frac{1}{\sigma_{\varepsilon}^{n}}\left(\sup_{|\varphi|_{p}=\sigma_{\varepsilon}}\frac{\exp\left(|\alpha(\varphi)|_{p}|w|_{-p}\right)}{|l_{\pi_{\sigma}^{\beta}}(\alpha(\varphi))|}\right)|\varphi|_{p}^{n}\\
 & \leq n!\frac{1}{\sigma_{\varepsilon}^{n}}\left(\sup_{|\varphi|_{p}=\sigma_{\varepsilon}}\frac{1}{|l_{\pi_{\sigma}^{\beta}}(\alpha(\varphi))|}\right)\exp\left(\varepsilon|w|_{-p}\right)|\varphi|_{p}^{n}\\
 & \leq C_{\varepsilon}n!\sigma_{\varepsilon}^{-n}\exp\left(\varepsilon|w|_{-p}\right)|\varphi|_{p}^{n}.
\end{align*}
Let $p'>p$ be such that $i_{p',p}$ is a Hilbert-Schmidt operator.
Then by the kernel theorem, we have

\[
|C_{n}^{\sigma,\beta}(w)|_{-p'}\leq n!C_{\varepsilon}\exp\left(\varepsilon|w|_{-p}\right)\left(\frac{1}{\sigma_{\varepsilon}}\|i_{p',p}\|_{HS}\right)^{n},\quad w\in\mathcal{H}_{-p,\mathbb{C}}.
\]
For sufficiently small $\varepsilon$, we fix $\sigma_{\varepsilon}=\varepsilon\|i_{p',p}\|_{HS}$
so that
\[
|C_{n}^{\sigma,\beta}(w)|_{-p'}\leq n!C_{\varepsilon}\varepsilon^{-n}\exp\left(\varepsilon|w|_{-p}\right).
\]
This concludes the proof.
\end{proof}

\subsection{Generalized Dual Appell System}

\label{sec:Generalized-Dual-Appell-System}In what follows, we use
again the approach in \cite{KSWY95} of non-Gausian analysis to introduce
the generalized dual Appell system associated with the fPm $\pi_{\sigma}^{\beta}$.
\begin{defn}
The \emph{space of smooth polynomials} $\mathcal{P}(\mathcal{D}')$
on $\mathcal{D}'$ is the space consisting of finite linear combinations
of monomial functions, that is, 
\[
\mathcal{P}(\mathcal{D}'):=\left\{ \varphi(w)=\sum_{n=0}^{N(\varphi)}\langle w^{\otimes n},\varphi^{(n)}\rangle\,\bigg|\,\varphi^{(n)}\in\mathcal{D}_{\mathbb{C}}^{\hat{\otimes}n},\;w\in\mathcal{D}',\;N(\varphi)\in\mathbb{N}_{0}\right\} .
\]
\end{defn}

The space $\mathcal{P}(\mathcal{D}')$ shall be equipped with the
natural topology, such that the mapping 
\[
I:\mathcal{P}(\mathcal{D}')\longrightarrow\bigoplus_{n=0}^{\infty}\mathcal{D}_{\mathbb{C}}^{\hat{\otimes}n}
\]
defined for any $\varphi(\cdot)=\sum_{n=0}^{\infty}\langle\cdot^{\otimes n},\varphi^{(n)}\rangle\in\mathcal{P}(\mathcal{D}')$
by 
\[
I\varphi=\vec{\varphi}=(\varphi^{(0)},\varphi^{(1)},\dots,\varphi^{(n)},\dots)
\]
becomes a topological isomorphism from $\mathcal{P}(\mathcal{D}')$
to the topological direct sum of symmetric tensor powers $\mathcal{D}_{\mathbb{C}}^{\hat{\otimes}n}$
(see \cite{BK88,S71}). Note that only a finite number of $\varphi^{(n)}$
is non-zero. With respect to this topology, a sequence $(\varphi_{m})_{m\in\mathbb{N}}$
of smooth continuous polynomials, that is, $\varphi_{m}(w)=\sum_{n=0}^{N(\varphi_{m})}\langle w^{\otimes n},\varphi_{m}^{(n)}\rangle$
converges to $\varphi(w)=\sum_{n=0}^{N(\varphi)}\langle w^{\otimes n},\varphi^{(n)}\rangle\in\mathcal{P}(\mathcal{D}')$
if, and only if, the sequence $(N(\varphi_{m}))_{m\in\mathbb{N}}$
is bounded and $(\varphi_{m}^{(n)})_{m\in\mathbb{N}}$ converges to
$\varphi^{(n)}$ in $\mathcal{D}_{\mathbb{C}}^{\hat{\otimes}n}$ for
all $n\in\mathbb{N}_{0}$.

Using Proposition~\ref{prop:generalized-appell-polynomials-inf-dim}-(P2),
the space of smooth polynomials $\mathcal{P}(\mathcal{D}')$ can also
be expressed in terms of the generalized Appell polynomials associated
with the measure $\pi_{\sigma}^{\beta}$ given by 
\[
\mathcal{P}(\mathcal{D}'):=\left\{ \varphi(w)=\sum_{n=0}^{N(\varphi)}\langle C_{n}^{\sigma,\beta}(w),\varphi^{(n)}\rangle\,\bigg|\,\varphi^{(n)}\in\mathcal{D}_{\mathbb{C}}^{\hat{\otimes}n},w\in\mathcal{D}',N(\varphi)\in\mathbb{N}_{0}\right\} .
\]

We denote by $\mathcal{P}'_{\pi_{\sigma}^{\beta}}(\mathcal{D}')$
the dual space of $\mathcal{P}(\mathcal{D}')$ with respect to $L^{2}(\pi_{\sigma}^{\beta}):=L^{2}(\mathcal{D}',\mathcal{C}_{\sigma}(\mathcal{D}'),\pi_{\sigma}^{\beta};\mathbb{C})$
and obtain the triple 
\begin{equation}
\mathcal{P}(\mathcal{D}')\subset L^{2}(\pi_{\sigma}^{\beta})\subset\mathcal{P}'_{\pi_{\sigma}^{\beta}}(\mathcal{D}').\label{eq:triple-P(D)-L2}
\end{equation}
The (bilinear) dual pairing $\langle\!\langle\cdot,\cdot\rangle\!\rangle_{\pi_{\sigma}^{\beta}}$
between $\mathcal{P}(\mathcal{D}')$ and $\mathcal{P}'_{\pi_{\sigma}^{\beta}}(\mathcal{D}')$
is then related to the (sesquilinear) inner product on $L^{2}(\pi_{\sigma}^{\beta})$
by 
\[
\langle\!\langle F,\varphi\rangle\!\rangle_{\pi_{\sigma}^{\beta}}=(\!(F,\bar{\varphi})\!)_{L^{2}(\pi_{\sigma}^{\beta})},\quad F\in L^{2}(\pi_{\sigma}^{\beta}),\;\varphi\in\mathcal{P}(\mathcal{D}'),
\]
where $\bar{\varphi}$ denotes the complex conjugate function of $\varphi$.
Further we introduce the constant function $\boldsymbol{1}\in L^{2}(\pi_{\sigma}^{\beta})\subset\mathcal{P}'_{\pi_{\sigma}^{\beta}}(\mathcal{D}')$
such that $\boldsymbol{1}(w)=1$ for all $w\in\mathcal{D}'$, so for
any polynomial $\varphi\in\mathcal{P}(\mathcal{D}')$,
\[
\mathbb{E_{\pi_{\sigma}^{\beta}}}(\varphi):=\int_{\mathcal{D}'}\varphi(w)\,\mathrm{d}\pi_{\sigma}^{\beta}(w)=\langle\!\langle\boldsymbol{1},\varphi\rangle\!\rangle_{\pi_{\sigma}^{\beta}}.
\]

Now, we will describe the distributions in $\mathcal{P}'_{\pi_{\sigma}^{\beta}}(\mathcal{D}')$
in a similar way as the smooth polynomials $\mathcal{P}(\mathcal{D}')$,
that is, for any $\Phi\in\mathcal{P}'_{\pi_{\sigma}^{\beta}}(\mathcal{D}')$,
we find elements $\Phi^{(n)}\in(\mathcal{D}_{\mathbb{C}}^{\hat{\otimes}n})'$
and operators $Q_{n}^{\sigma,\beta,\alpha}$ on $(\mathcal{D}_{\mathbb{C}}^{\hat{\otimes}n})'$,
such that 
\[
\Phi=\sum_{n=0}^{\infty}Q_{n}^{\sigma,\beta,\alpha}(\Phi^{(n)})\in\mathcal{P}'_{\pi_{\sigma}^{\beta}}(\mathcal{D}').
\]
To this end, we define first a differential operator $D(\Phi^{(n)})$
depending on $\Phi^{(n)}\in(\mathcal{D}_{\mathbb{C}}^{\hat{\otimes}n})'$
such that when applied to the monomials $\langle w^{\otimes m},\varphi^{(m)}\rangle,$
$\varphi^{(m)}\in\mathcal{D}_{\mathbb{C}}^{\hat{\otimes}m}$, $m\in\mathbb{N}_{0}$,
gives 
\[
D(\Phi^{(n)})\langle w^{\otimes m},\varphi^{(m)}\rangle:=\begin{cases}
\frac{m!}{(m-n)!}\langle w^{\otimes(m-n)}\hat{\otimes}\Phi^{(n)},\varphi^{(m)}\rangle, & \mathrm{for}\:m\geq n\\
0, & \mathrm{otherwise}
\end{cases}
\]
and extend by linearity from the monomials to elements in $\mathcal{P}(\mathcal{D}')$.
If we consider the space of Schwartz test function $\mathcal{S}(\mathbb{R})$
instead of using the space $\mathcal{D}$ with the triple
\[
\mathcal{S}(\mathbb{R})\subset L^{2}(\mathbb{R},\mathrm{d}x)\subset\mathcal{S}'(\mathbb{R}),
\]
then for $n=1$ and $\Phi^{(1)}=\delta_{t}\in\mathcal{S}_{\mathbb{C}}'(\mathbb{R})$,
the differential operator $D(\delta_{t})$ coincides with the Hida
derivative, see \cite{HKPS93}. Note that $D(\Phi^{(n)})$ is a continuous
linear operator from $\mathcal{P}(\mathcal{D}')$ to $\mathcal{P}(\mathcal{D}')$
(see \cite[Lemma 4.13]{KSWY95}) and this enables us to define the
dual operator 
\[
D(\Phi^{(n)})^{*}:\mathcal{P}'_{\pi_{\sigma}^{\beta}}(\mathcal{D}')\longrightarrow\mathcal{P}'_{\pi_{\sigma}^{\beta}}(\mathcal{D}').
\]
Below we need the evaluation of the operator $D(\Phi^{(n)})$ on the
monomials $\langle P_{m}^{\sigma,\beta}(w),\varphi^{(m)}\rangle$,
$m\in\mathbb{N}_{0}$ in \eqref{eq:P-system}. We state this result
in the next proposition and the proof can be found in \cite[Lemma 4.14]{KSWY95}.
\begin{prop}
\label{prop:operator-D-on-P}For $\Phi^{(n)}\in(\mathcal{D}_{\mathbb{C}}^{\hat{\otimes}n})'$
and $\varphi^{(m)}\in\mathcal{D}_{\mathbb{C}}^{\hat{\otimes}m}$ we
have

\[
D(\Phi^{(n)})\langle P_{m}^{\sigma,\beta}(w),\varphi^{(m)}\rangle=\begin{cases}
{\displaystyle \frac{m!}{(m-n)!}\langle P_{m-n}^{\sigma,\beta}(w)\hat{\otimes}\Phi^{(n)},\varphi^{(m)}\rangle,} & \mathit{for}\;m\geq n\\
0, & \mathit{for}\;m<n.
\end{cases}
\]
\end{prop}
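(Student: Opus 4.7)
The plan is to derive the identity via the generating function for the Appell polynomials $P_m^{\sigma,\beta}$. Recall from \eqref{eq:Wick-Appell-inf-dim} and \eqref{eq:normalized-expo-inf-dim} that, for $\varphi$ in a neighborhood of zero in $\mathcal{D}_{\mathbb{C}}$,
\[
\mathrm{e}_{\pi_{\sigma}^{\beta}}(\varphi;w)=\frac{\exp(\langle w,\varphi\rangle)}{l_{\pi_{\sigma}^{\beta}}(\varphi)}=\sum_{m=0}^{\infty}\frac{1}{m!}\langle P_{m}^{\sigma,\beta}(w),\varphi^{\otimes m}\rangle.
\]
The strategy is to compute $D(\Phi^{(n)})$ applied to both representations and compare the resulting power series in $\varphi$.

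First I would compute the action of $D(\Phi^{(n)})$ on the raw exponential $\exp(\langle w,\varphi\rangle)=\sum_{m\ge 0}\frac{1}{m!}\langle w^{\otimes m},\varphi^{\otimes m}\rangle$. Applying the defining formula of $D(\Phi^{(n)})$ term by term, exploiting the symmetry $\varphi^{\otimes m}=\varphi^{\otimes(m-n)}\hat{\otimes}\varphi^{\otimes n}$, and reindexing $k=m-n$, one obtains
\[
D(\Phi^{(n)})\exp(\langle w,\varphi\rangle)=\langle \Phi^{(n)},\varphi^{\otimes n}\rangle\exp(\langle w,\varphi\rangle).
\]
Since $l_{\pi_{\sigma}^{\beta}}(\varphi)$ does not depend on $w$ and $D(\Phi^{(n)})$ is a differential operator acting only in the $w$-variable, it commutes with division by $l_{\pi_{\sigma}^{\beta}}(\varphi)$, so
\[
D(\Phi^{(n)})\,\mathrm{e}_{\pi_{\sigma}^{\beta}}(\varphi;w)=\langle \Phi^{(n)},\varphi^{\otimes n}\rangle\,\mathrm{e}_{\pi_{\sigma}^{\beta}}(\varphi;w).
\]

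Next I would substitute the Appell expansion on both sides. The left-hand side becomes $\sum_{m\ge 0}\frac{1}{m!}D(\Phi^{(n)})\langle P_{m}^{\sigma,\beta}(w),\varphi^{\otimes m}\rangle$, while the right-hand side, after multiplying the factor $\langle \Phi^{(n)},\varphi^{\otimes n}\rangle$ into the series and reindexing, gives $\sum_{m\ge n}\frac{1}{(m-n)!}\langle P_{m-n}^{\sigma,\beta}(w)\hat{\otimes}\Phi^{(n)},\varphi^{\otimes m}\rangle$. Matching the coefficients of $\varphi^{\otimes m}/m!$ yields the claim at $\varphi^{(m)}=\varphi^{\otimes m}$ for $m\ge n$, and the vanishing for $m<n$. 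Finally, the polarization identity extends the identity from $\varphi^{\otimes m}$ to arbitrary symmetric test tensors $\varphi^{(m)}\in\mathcal{D}_{\mathbb{C}}^{\hat{\otimes}m}$.

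The only delicate point is the legitimacy of comparing coefficients. The generating series $\mathrm{e}_{\pi_{\sigma}^{\beta}}(\varphi;w)$ only converges for $\varphi$ in some neighborhood $\mathcal{U}_{0}$ of zero in $\mathcal{D}_{\mathbb{C}}$, but inside that neighborhood the series is absolutely convergent and the resulting identity between holomorphic functions of $\varphi$ uniquely determines its Taylor coefficients at the origin; equivalently, the identity $D(\Phi^{(n)})\mathrm{e}_{\pi_{\sigma}^{\beta}}(\varphi;w)=\langle\Phi^{(n)},\varphi^{\otimes n}\rangle\mathrm{e}_{\pi_{\sigma}^{\beta}}(\varphi;w)$ is an equality of formal power series in $\varphi$, which is precisely the setting needed to read off the coefficient at each fixed $m$. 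This is the main technical obstacle, but it is standard in this Appell-system framework.
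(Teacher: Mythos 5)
Your proof is correct, and it follows essentially the same route as the source the paper cites for this statement (\cite[Lemma~4.14]{KSWY95}): establish the eigenvalue relation $D(\Phi^{(n)})\exp(\langle w,\varphi\rangle)=\langle\Phi^{(n)},\varphi^{\otimes n}\rangle\exp(\langle w,\varphi\rangle)$, divide by the $w$-independent factor $l_{\pi_{\sigma}^{\beta}}(\varphi)$, and compare homogeneous components, which is also exactly the technique the paper itself uses for the analogous statement about $G(\Phi^{(n)})$ in Lemma~\ref{lem:operator-G-on-exp-z-phi}. The coefficient-comparison step you flag is handled correctly: since $D(\Phi^{(n)})$ preserves the grading in $\varphi$, the term-by-term action on the series agrees with the Taylor expansion of both sides, so the identity at each fixed $m$ follows, and polarization finishes the argument.
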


Now, we set $Q_{n}^{\sigma,\beta}(\Phi^{(n)}):=D(\Phi^{(n)})^{*}\mathbf{1}$
for $\Phi^{(n)}\in(\mathcal{D}_{\mathbb{C}}^{\hat{\otimes}n})'$ and
denote the so-called $\mathbb{Q}^{\sigma,\beta}$-system in $\mathcal{P}'_{\pi_{\sigma}^{\beta}}(\mathcal{D}')$
by 
\[
\mathbb{Q}^{\sigma,\beta}:=\left\{ Q_{n}^{\sigma,\beta}(\Phi^{(n)})\mid\Phi^{(n)}\in(\mathcal{D}_{\mathbb{C}}^{\hat{\otimes}n})',\,n\in\mathbb{N}_{0}\right\} .
\]

The pair $\mathbb{A}^{\sigma,\beta}=(\mathbb{P}^{\sigma,\beta},\mathbb{Q}^{\sigma,\beta})$
is called the \emph{Appell system generated by the measure} $\pi_{\sigma}^{\beta}$.
This system satisfies the biorthogonal property, see \cite{KSWY95},
given in the following theorem.
\begin{thm}
For $\Phi^{(m)}\in(\mathcal{D}_{\mathbb{C}}^{\hat{\otimes}m})'$ and
$\varphi^{(n)}\in\mathcal{D}_{\mathbb{C}}^{\hat{\otimes}n}$ we have
\begin{equation}
\langle\!\langle Q_{n}^{\sigma,\beta}(\Phi^{(m)}),\langle P_{n}^{\sigma,\beta},\varphi^{(n)}\rangle\rangle\!\rangle_{\pi_{\sigma}^{\beta}}=\delta_{m,n}n!\langle\Phi^{(n)},\varphi^{(n)}\rangle,\quad n,m\in\mathbb{N}_{0}.\label{eq:biorthogonal-P}
\end{equation}
\end{thm}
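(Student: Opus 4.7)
The plan is to exploit the very definition $Q_{m}^{\sigma,\beta}(\Phi^{(m)}):=D(\Phi^{(m)})^{\ast}\mathbf{1}$ and transfer the dual pairing across $D(\Phi^{(m)})^{\ast}$, at which point Proposition~\ref{prop:operator-D-on-P} reduces everything to an expectation of an Appell polynomial that vanishes off degree zero. (We read the statement with the natural correction that the index on $Q$ matches that of $\Phi^{(m)}$.)

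First, using the fact that $D(\Phi^{(m)})$ is a continuous operator on $\mathcal{P}(\mathcal{D}')$ and the bilinearity of $\langle\!\langle\cdot,\cdot\rangle\!\rangle_{\pi_{\sigma}^{\beta}}$, I would write
\[
\langle\!\langle Q_{m}^{\sigma,\beta}(\Phi^{(m)}),\langle P_{n}^{\sigma,\beta},\varphi^{(n)}\rangle\rangle\!\rangle_{\pi_{\sigma}^{\beta}}
=\langle\!\langle \mathbf{1},\,D(\Phi^{(m)})\langle P_{n}^{\sigma,\beta},\varphi^{(n)}\rangle\rangle\!\rangle_{\pi_{\sigma}^{\beta}}.
\]
Then I would invoke Proposition~\ref{prop:operator-D-on-P} to rewrite the right-hand side. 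For $n<m$ it already vanishes identically, matching $\delta_{m,n}n!\langle\Phi^{(n)},\varphi^{(n)}\rangle=0$; for $n\geq m$ it becomes
\[
\mathbb{E}_{\pi_{\sigma}^{\beta}}\!\left[\frac{n!}{(n-m)!}\,\langle P_{n-m}^{\sigma,\beta}(\cdot)\,\hat{\otimes}\,\Phi^{(m)},\varphi^{(n)}\rangle\right].
\]

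The final step is the zero-mean property of the $\mathbb{P}^{\sigma,\beta}$-system: $\mathbb{E}_{\pi_{\sigma}^{\beta}}[\langle P_{k}^{\sigma,\beta}(\cdot),\psi^{(k)}\rangle]=\delta_{k,0}\langle 1,\psi^{(0)}\rangle$ for any $\psi^{(k)}\in\mathcal{D}_{\mathbb{C}}^{\hat{\otimes}k}$. This is the analogue of Proposition~\ref{prop:generalized-appell-polynomials-inf-dim}-(P6) for the $P$-system, and I would prove it by taking the expectation in the generating series \eqref{eq:Wick-Appell-inf-dim}, using $\mathbb{E}_{\pi_{\sigma}^{\beta}}[\mathrm{e}^{\langle\cdot,\varphi\rangle}]=l_{\pi_{\sigma}^{\beta}}(\varphi)$ to conclude that $\mathbb{E}_{\pi_{\sigma}^{\beta}}[\mathrm{e}_{\pi_{\sigma}^{\beta}}(\varphi;\cdot)]=1$, and then comparing coefficients via polarization. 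Applied with $k=n-m$, this forces $n=m$, and the factor $n!/(n-m)!$ collapses to $n!$. Since $P_{0}^{\sigma,\beta}=1$, the tensor pairing simplifies as $\langle 1\,\hat{\otimes}\,\Phi^{(n)},\varphi^{(n)}\rangle=\langle\Phi^{(n)},\varphi^{(n)}\rangle$, yielding $\delta_{m,n}n!\langle\Phi^{(n)},\varphi^{(n)}\rangle$ as required.

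The computation itself is short; the main obstacle is bookkeeping. I would need to verify carefully that the adjoint identity $\langle\!\langle D(\Phi^{(m)})^{\ast}\mathbf{1},\varphi\rangle\!\rangle_{\pi_{\sigma}^{\beta}}=\langle\!\langle \mathbf{1},D(\Phi^{(m)})\varphi\rangle\!\rangle_{\pi_{\sigma}^{\beta}}$ is valid for the \emph{bilinear} pairing (given that $\langle\!\langle F,\varphi\rangle\!\rangle_{\pi_{\sigma}^{\beta}}=(\!(F,\bar{\varphi})\!)_{L^{2}(\pi_{\sigma}^{\beta})}$, one must check the conjugation conventions on $D(\Phi^{(m)})$, which poses no issue because $\Phi^{(m)}$ is already allowed to be complex-valued and the operator is linear), and that the partial symmetric-tensor contraction $P_{0}^{\sigma,\beta}\hat{\otimes}\Phi^{(m)}$ paired against $\varphi^{(n)}\in\mathcal{D}_{\mathbb{C}}^{\hat{\otimes}n}$ indeed reads off as $\langle\Phi^{(m)},\varphi^{(n)}\rangle$ when $n=m$. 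Both points are formal but should be stated explicitly so as to make the biorthogonality in \eqref{eq:biorthogonal-P} rigorous.
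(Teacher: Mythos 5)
Your proof is correct and is precisely the standard argument: the paper itself gives no proof of this theorem (it cites \cite{KSWY95}), and your route --- unwinding $Q_{m}^{\sigma,\beta}(\Phi^{(m)})=D(\Phi^{(m)})^{*}\mathbf{1}$ through the dual pairing, applying Proposition~\ref{prop:operator-D-on-P}, and killing every term with $n\neq m$ via the zero-mean property $\mathbb{E}_{\pi_{\sigma}^{\beta}}\big[\langle P_{k}^{\sigma,\beta},\psi^{(k)}\rangle\big]=\delta_{k,0}$ --- is exactly the one used there and reused (with the extra Stirling-operator bookkeeping) in the paper's own proof of the generalized Theorem~\ref{thm:biorthogonal-property-inf-dim}. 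Your reading of the index typo ($Q_{m}$ for $Q_{n}$) is the intended one, and the adjoint identity you flag holds by the very definition of the dual operator with respect to the bilinear pairing $\langle\!\langle\cdot,\cdot\rangle\!\rangle_{\pi_{\sigma}^{\beta}}$, so no further verification is needed.
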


However, our aim is to construct the generalized dual Appell system
$\mathbb{Q}^{\sigma,\beta,\alpha}$ such that $\mathbb{P}^{\sigma,\beta,\alpha}$
and $\mathbb{Q}^{\sigma,\beta,\alpha}$ are biorthogonal. The reason
to do this is because when $\beta=1$ we obtain only one system of
orthogonal polynomials, so-called the Charlier polynomials, see \cite{IK88}.

First, recall the function $g_{\alpha}(\varphi)$, $\varphi\in\mathcal{D}_{\mathbb{C}}$
from Section \ref{sec:Generalized-Appell-Polynomials}. By Equation
\eqref{eq:decomposition-g-alpha^k}, we have 
\[
g_{\alpha}(\varphi)^{\otimes n}=\sum_{k=n}^{\infty}\frac{n!}{k!}\mathbf{S}(n,k)\varphi^{\otimes k}.
\]
Then for any $\Phi^{(n)}\in(\mathcal{D}_{\mathbb{C}}^{\hat{\otimes}n})'$,
we have 
\begin{equation}
\langle\Phi^{(n)},g_{\alpha}(\varphi)^{\otimes n}\rangle=\sum_{k=n}^{\infty}\frac{n!}{k!}\langle\Phi^{(n)},\mathbf{S}(k,n)\varphi^{\otimes k}\rangle=\sum_{k=n}^{\infty}\frac{n!}{k!}\langle\mathbf{S}(k,n)^{*}\Phi^{(n)},\varphi^{\otimes k}\rangle,\label{eq:Phi-g-alpha-n-Stirling}
\end{equation}
where $\mathbf{S}(k,n)^{*}\Phi^{(n)}\in(\mathcal{D}_{\mathbb{C}}^{\hat{\otimes}k})'$.
Now, we define the operator $G(\Phi^{(n)})$ by
\[
G(\Phi^{(n)}):\mathcal{P}(\mathcal{D}')\longrightarrow\mathcal{P}(\mathcal{D}'),\;\varphi\mapsto G(\Phi^{(n)})\varphi:=\sum_{k=n}^{\infty}\frac{n!}{k!}D(\mathbf{S}(k,n)^{*}\Phi^{(n)})\varphi.
\]
Since $D(\mathbf{S}(k,n)^{*}\Phi^{(n)})$ is continuous for any $\Phi^{(n)}\in(\mathcal{D}_{\mathbb{C}}^{\hat{\otimes}n})'$,
it is easy to see that $G(\Phi^{(n)})$ is also continuous and so
its adjoint $G(\Phi^{(n)})^{*}:\mathcal{P}'_{\pi_{\sigma}^{\beta}}(\mathcal{D}')\longrightarrow\mathcal{P}'_{\pi_{\sigma}^{\beta}}(\mathcal{D}')$
exists.
\begin{defn}
For any $\Phi^{(n)}\in(\mathcal{D}_{\mathbb{C}}^{\hat{\otimes}n})'$,
we define the \emph{generalized function} $Q_{n}^{\sigma,\beta,\alpha}(\Phi^{(n)})\in\mathcal{P}'_{\pi_{\sigma}^{\beta}}(\mathcal{D}')$,
$n\in\mathbb{N}_{0}$, by
\begin{equation}
Q_{n}^{\sigma,\beta,\alpha}(\Phi^{(n)}):=G(\Phi^{(n)})^{*}\boldsymbol{1}.\label{eq:generalized-function-inf-dim}
\end{equation}
The family
\[
\mathbb{Q}^{\sigma,\beta,\alpha}:=\left\{ Q_{n}^{\sigma,\beta,\alpha}(\Phi^{(n)})\mid\Phi^{(n)}\in(\mathcal{D}_{\mathbb{C}}^{\hat{\otimes}n})',\,n\in\mathbb{N}_{0}\right\} 
\]
is said to be the \emph{generalized dual Appell system} $\mathbb{Q}^{\sigma,\beta,\alpha}$
associated with $\pi_{\sigma}^{\beta}$ or the $\mathbb{Q}^{\sigma,\beta,\alpha}$-system
and the pair $\mathbb{A}^{\sigma,\beta,\alpha}:=(\mathbb{P}^{\sigma,\beta,\alpha},\mathbb{Q}^{\sigma,\beta,\alpha})$
is called the \emph{generalized Appell system} generated by the measure
$\pi_{\sigma}^{\beta}$.
\end{defn}

The following theorem states the biorthogonal property of the generalized
Appell system $\mathbb{A}^{\sigma,\beta,\alpha}$.
\begin{thm}
\label{thm:biorthogonal-property-inf-dim}For $\Phi^{(n)}\in(\mathcal{D}_{\mathbb{C}}^{\hat{\otimes}n})'$
and $\varphi^{(m)}\in\mathcal{D}_{\mathbb{C}}^{\hat{\otimes}m}$ we
have 
\[
\langle\!\langle Q_{n}^{\sigma,\beta,\alpha}(\Phi^{(n)}),\langle C_{m}^{\sigma,\beta},\varphi^{(m)}\rangle\rangle\!\rangle_{\pi_{\sigma}^{\beta}}=\delta_{n,m}n!\langle\Phi^{(n)},\varphi^{(n)}\rangle,\quad n,m\in\mathbb{N}_{0}.
\]
\end{thm}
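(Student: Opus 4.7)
The plan is to reduce the pairing to an expectation via the adjoint definition of $Q_{n}^{\sigma,\beta,\alpha}$, then to expand $\langle C_{m}^{\sigma,\beta},\varphi^{(m)}\rangle$ back in terms of the original Appell system $\mathbb{P}^{\sigma,\beta}$ using property~(P1) of Proposition~\ref{prop:generalized-appell-polynomials-inf-dim}, and finally to collapse the resulting double sum by a Stirling inversion identity on tensor powers.

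Concretely, since the pairing $\langle\!\langle\cdot,\cdot\rangle\!\rangle_{\pi_{\sigma}^{\beta}}$ is bilinear and $Q_{n}^{\sigma,\beta,\alpha}(\Phi^{(n)})=G(\Phi^{(n)})^{*}\mathbf{1}$, pushing the adjoint through the pairing gives
\[
\langle\!\langle Q_{n}^{\sigma,\beta,\alpha}(\Phi^{(n)}),\langle C_{m}^{\sigma,\beta},\varphi^{(m)}\rangle\rangle\!\rangle_{\pi_{\sigma}^{\beta}}
=\mathbb{E}_{\pi_{\sigma}^{\beta}}\bigl[G(\Phi^{(n)})\langle C_{m}^{\sigma,\beta}(\cdot),\varphi^{(m)}\rangle\bigr].
\]
I would then invoke (P1) to rewrite $\langle C_{m}^{\sigma,\beta}(w),\varphi^{(m)}\rangle=\sum_{l=0}^{m}\langle P_{l}^{\sigma,\beta}(w),\mathbf{s}(m,l)\varphi^{(m)}\rangle$, expand $G(\Phi^{(n)})=\sum_{k\geq n}\frac{n!}{k!}D(\mathbf{S}(k,n)^{*}\Phi^{(n)})$, and apply Proposition~\ref{prop:operator-D-on-P}. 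For $l\geq k$ this yields
\[
D(\mathbf{S}(k,n)^{*}\Phi^{(n)})\langle P_{l}^{\sigma,\beta}(\cdot),\mathbf{s}(m,l)\varphi^{(m)}\rangle
=\frac{l!}{(l-k)!}\bigl\langle P_{l-k}^{\sigma,\beta}(\cdot)\hat{\otimes}\mathbf{S}(k,n)^{*}\Phi^{(n)},\mathbf{s}(m,l)\varphi^{(m)}\bigr\rangle,
\]
and zero otherwise.

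The key point is that after taking expectation the identity $\mathbb{E}_{\pi_{\sigma}^{\beta}}[\mathrm{e}_{\pi_{\sigma}^{\beta}}(\psi;\cdot)]=1$ forces $\mathbb{E}_{\pi_{\sigma}^{\beta}}[\langle P_{j}^{\sigma,\beta}(\cdot),\eta\rangle]=0$ for $j\geq 1$, so only the diagonal contribution $l=k$ survives (with $P_{0}^{\sigma,\beta}=1$). What remains is
\[
\mathbb{E}_{\pi_{\sigma}^{\beta}}\bigl[G(\Phi^{(n)})\langle C_{m}^{\sigma,\beta},\varphi^{(m)}\rangle\bigr]
= n!\,\Bigl\langle\Phi^{(n)},\Bigl(\sum_{k=n}^{m}\mathbf{S}(k,n)\mathbf{s}(m,k)\Bigr)\varphi^{(m)}\Bigr\rangle.
\]
Thus the theorem reduces to verifying the Stirling inversion on tensor powers,
\[
\sum_{k=n}^{m}\mathbf{S}(k,n)\mathbf{s}(m,k)=\delta_{m,n}\,\mathrm{id}_{\mathcal{D}_{\mathbb{C}}^{\hat{\otimes}m}},
\]
which I would derive by substituting $\alpha(\psi)^{\otimes k}=\sum_{j\geq k}\frac{k!}{j!}\mathbf{s}(j,k)\psi^{\otimes j}$ into $g_{\alpha}(\alpha(\psi))^{\otimes n}=\sum_{k\geq n}\frac{n!}{k!}\mathbf{S}(k,n)\alpha(\psi)^{\otimes k}$, using $g_{\alpha}\circ\alpha=\mathrm{id}_{\mathcal{D}_{\mathbb{C}}}$, and matching coefficients of $\psi^{\otimes m}$; this is the tensor-power lift of the classical scalar identity $\sum_{k}S(k,n)s(m,k)=\delta_{m,n}$ and is presumably recorded in Appendix~\ref{sec:Stirling-Operators}.

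The main obstacle will be formulating the Stirling inversion cleanly on the symmetric tensor algebra and bookkeeping the indices through the two-step expansion. Interchanging the $G$-series with the expectation is harmless because $\langle C_{m}^{\sigma,\beta},\varphi^{(m)}\rangle$ has polynomial degree $m$, so $D(\mathbf{S}(k,n)^{*}\Phi^{(n)})$ annihilates it for $k>m$ and the series collapses to a finite sum. The alternative route via generating functions, pairing $\mathrm{e}_{\pi_{\sigma}^{\beta}}(\alpha(\varphi);\cdot)$ with the generating functional $\rho_{\pi_{\sigma}^{\beta}}^{\alpha}(w,\cdot)=\sum_{k}\tfrac{1}{k!}Q_{k}^{\pi_{\sigma}^{\beta},\alpha}((-w)_{k})$ and reading off coefficients, presumably underlies the proof in Appendix~\ref{sec:alternative-proof-biorthogonal-property}.
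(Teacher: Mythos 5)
Your proposal is correct and follows essentially the same route as the paper's proof: push the adjoint through so that $G(\Phi^{(n)})$ acts on the polynomial, expand $\langle C_{m}^{\sigma,\beta},\varphi^{(m)}\rangle$ via (P1) into the $\mathbb{P}^{\sigma,\beta}$-system, apply Proposition~\ref{prop:operator-D-on-P} to the expansion of $G(\Phi^{(n)})$ in the operators $D(\mathbf{S}(k,n)^{*}\Phi^{(n)})$, kill the off-diagonal terms with $\mathbb{E}_{\pi_{\sigma}^{\beta}}(\langle P_{j}^{\sigma,\beta},\cdot\rangle)=\delta_{j,0}$, and finish with the Stirling inversion, which is indeed Proposition~\ref{prop:Stirling-1st-2nd-kind-inf-dim} in Appendix~\ref{sec:Stirling-Operators}. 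Your remark that the $k$-series terminates because $D(\mathbf{S}(k,n)^{*}\Phi^{(n)})$ annihilates polynomials of degree below $k$ cleanly handles an index bookkeeping point that the paper's displayed sums gloss over.
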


\begin{proof}
By Proposition \ref{prop:generalized-appell-polynomials-inf-dim}-(P1),
we have
\[
\langle C_{m}^{\sigma,\beta},\varphi^{(m)}\rangle=\bigg\langle\sum_{i=0}^{m}\mathbf{s}(m,i)^{*}P_{i}^{\sigma,\beta},\varphi^{(m)}\bigg\rangle=\sum_{i=0}^{m}\langle P_{i}^{\sigma,\beta},\mathbf{s}(m,i)\varphi^{(m)}\rangle.
\]
Then it follows from Proposition \ref{prop:operator-D-on-P} (noted
below with $\star$) and Proposition 11-(P4) in \cite{KSWY95} ($\star$$\star$)
that 
\begin{align*}
\langle\!\langle Q_{n}^{\sigma,\beta,\alpha}(\Phi^{(n)}),\langle C_{m}^{\sigma,\beta},\varphi^{(m)}\rangle\rangle\!\rangle_{\pi_{\sigma}^{\beta}} & =\langle\!\langle\mathbf{1},G(\Phi^{(n)})\langle C_{m}^{\sigma,\beta},\varphi^{(m)}\rangle\rangle\!\rangle_{\pi_{\sigma}^{\beta}}\\
 & =\sum_{i=0}^{m}\langle\!\langle\mathbf{1},G(\Phi^{(n)})\langle P_{i}^{\sigma,\beta},\mathbf{s}(m,i)\varphi^{(m)}\rangle\rangle\!\rangle_{\pi_{\sigma}^{\beta}}\\
 & \overset{\star}{=}\sum_{i=k}^{m}\sum_{k=n}^{\infty}\frac{n!}{k!}\frac{i!}{(i-k)!}\langle\!\langle\mathbf{1},\langle P_{i-k}^{\sigma,\beta}\hat{\otimes}\mathbf{S}(k,n)^{*}\Phi^{(n)},\mathbf{s}(m,i)\varphi^{(m)}\rangle\rangle\!\rangle_{\pi_{\sigma}^{\beta}}\\
 & =\sum_{i=k}^{m}\sum_{k=n}^{\infty}\frac{n!i!}{k!(i-k)!}\mathbb{E}_{\pi_{\sigma}^{\beta}}(\langle P_{i-k}^{\sigma,\beta}\hat{\otimes}\mathbf{S}(k,n)^{*}\Phi^{(n)},\mathbf{s}(m,i)\varphi^{(m)}\rangle)\\
 & \overset{\star\star}{=}\sum_{i=k}^{m}\sum_{k=n}^{m}\frac{n!i!}{k!(i-k)!}\delta_{i,k}\langle\mathbf{S}(k,n)^{*}\Phi^{(n)},\mathbf{s}(m,k)\varphi^{(m)}\rangle\\
 & =\sum_{k=n}^{m}n!\langle\Phi^{(n)},\mathbf{S}(k,n)\mathbf{s}(m,k)\varphi^{(m)}\rangle\\
 & =\delta_{n,m}n!\langle\Phi^{(n)},\varphi^{(n)}\rangle,
\end{align*}
where the last equality is obtained using Proposition \ref{prop:Stirling-1st-2nd-kind-inf-dim}
in Appendix \ref{sec:Stirling-Operators}.
\end{proof}
\begin{rem}
In Appendix \ref{sec:alternative-proof-biorthogonal-property}, we
provide an alternative proof for the biorthogonal property of the
generalized Appell system $\mathbb{A}^{\sigma,\beta,\alpha}$ using
the $S_{\text{\ensuremath{\pi_{\sigma}^{\beta}}}}$-transform (to
be introduced in Section \ref{sec:test-and-generalized-function-spaces})
of the generalized function $Q_{n}^{\sigma,\beta,\alpha}(\Phi^{(n)})\in\mathcal{P}'_{\pi_{\sigma}^{\beta}}(\mathcal{D}')$.
It is based on the fact that $\exp(\langle z,\varphi\rangle)$ is
an eigenfunction of the generalized function $G(\Phi^{(n)})$.
\end{rem}

Using Theorem \ref{thm:biorthogonal-property-inf-dim}, the space
$\mathcal{P}'_{\pi_{\sigma}^{\beta}}(\mathcal{D}')$ can now be characterized
in a similar way as the space $\mathcal{P}(\mathcal{D}')$. See \cite{KdSS98}
for the proof of the following theorem.
\begin{thm}
For every $\Phi\in\mathcal{P}'_{\pi_{\sigma}^{\beta}}(\mathcal{D}')$,
there exists a unique sequence $(\Phi^{(n)})_{n\in\mathbb{N}_{0}}$,
$\Phi^{(n)}\in(\mathcal{D}_{\mathbb{C}}^{\hat{\otimes}n})'$ such
that 
\[
\Phi=\sum_{n=0}^{\infty}Q_{n}^{\sigma,\beta,\alpha}(\Phi^{(n)})
\]
and vice versa, every such series generates a generalized function
in $\mathcal{P}'_{\pi_{\sigma}^{\beta}}(\mathcal{D}')$.
\end{thm}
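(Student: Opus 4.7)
The plan is to exploit the biorthogonal property established in Theorem~\ref{thm:biorthogonal-property-inf-dim} to reconstruct any generalized function $\Phi$ from its action on the generalized Appell polynomials, in the spirit of a Fourier-type expansion attached to a biorthogonal system. The argument splits naturally into existence, uniqueness, and a converse. The representation of $\mathcal{P}(\mathcal{D}')$ in terms of the monomials $\langle C_{n}^{\sigma,\beta}(\cdot),\varphi^{(n)}\rangle$, guaranteed by Proposition~\ref{prop:generalized-appell-polynomials-inf-dim}-(P2), plays a key role throughout.

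For existence, given $\Phi\in\mathcal{P}'_{\pi_{\sigma}^{\beta}}(\mathcal{D}')$ I would define, for each $n\in\mathbb{N}_{0}$, a functional on $\mathcal{D}_{\mathbb{C}}^{\hat{\otimes}n}$ by
\[
\langle\Phi^{(n)},\varphi^{(n)}\rangle:=\frac{1}{n!}\langle\!\langle\Phi,\langle C_{n}^{\sigma,\beta}(\cdot),\varphi^{(n)}\rangle\rangle\!\rangle_{\pi_{\sigma}^{\beta}},\qquad\varphi^{(n)}\in\mathcal{D}_{\mathbb{C}}^{\hat{\otimes}n}.
\]
Linearity is immediate, and continuity follows because the map $\varphi^{(n)}\mapsto\langle C_{n}^{\sigma,\beta}(\cdot),\varphi^{(n)}\rangle$ is a continuous embedding of $\mathcal{D}_{\mathbb{C}}^{\hat{\otimes}n}$ into $\mathcal{P}(\mathcal{D}')$ under the topology transported from $\bigoplus_{n}\mathcal{D}_{\mathbb{C}}^{\hat{\otimes}n}$ through the isomorphism $I$. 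Hence $\Phi^{(n)}\in(\mathcal{D}_{\mathbb{C}}^{\hat{\otimes}n})'$. Next I would verify the claimed expansion. Any $\varphi\in\mathcal{P}(\mathcal{D}')$ admits a finite biorthogonal decomposition $\varphi(w)=\sum_{k=0}^{N(\varphi)}\langle C_{k}^{\sigma,\beta}(w),\varphi^{(k)}\rangle$. Testing the proposed series against $\varphi$ and invoking Theorem~\ref{thm:biorthogonal-property-inf-dim} gives
\[
\Big\langle\!\Big\langle\sum_{n=0}^{\infty}Q_{n}^{\sigma,\beta,\alpha}(\Phi^{(n)}),\varphi\Big\rangle\!\Big\rangle_{\pi_{\sigma}^{\beta}}=\sum_{n=0}^{N(\varphi)}n!\,\langle\Phi^{(n)},\varphi^{(k)}\rangle=\langle\!\langle\Phi,\varphi\rangle\!\rangle_{\pi_{\sigma}^{\beta}},
\]
where only finitely many indices contribute, so the series converges in the weak topology on $\mathcal{P}'_{\pi_{\sigma}^{\beta}}(\mathcal{D}')$. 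For uniqueness, if $\sum_{n}Q_{n}^{\sigma,\beta,\alpha}(\Phi^{(n)})=0$, testing against $\langle C_{m}^{\sigma,\beta},\varphi^{(m)}\rangle$ for arbitrary $m$ and $\varphi^{(m)}$ yields $m!\,\langle\Phi^{(m)},\varphi^{(m)}\rangle=0$, hence $\Phi^{(m)}=0$ for all $m$.

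For the converse direction I would start from any sequence $(\Phi^{(n)})_{n\in\mathbb{N}_{0}}$ with $\Phi^{(n)}\in(\mathcal{D}_{\mathbb{C}}^{\hat{\otimes}n})'$ and define a functional on $\mathcal{P}(\mathcal{D}')$ by
\[
\Phi(\varphi):=\sum_{n=0}^{N(\varphi)}n!\,\langle\Phi^{(n)},\varphi^{(n)}\rangle,
\]
using the biorthogonal expansion of $\varphi$. Since this sum is always finite, $\Phi$ is well defined and linear; its continuity on $\mathcal{P}(\mathcal{D}')$ follows from the topological isomorphism $I$ together with the continuity of each individual $\Phi^{(n)}$. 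Biorthogonality then identifies $\Phi$ with $\sum_{n}Q_{n}^{\sigma,\beta,\alpha}(\Phi^{(n)})$ in $\mathcal{P}'_{\pi_{\sigma}^{\beta}}(\mathcal{D}')$.

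The main obstacle is the continuity step used to construct $\Phi^{(n)}$: one must check that $\varphi^{(n)}\mapsto\langle C_{n}^{\sigma,\beta}(\cdot),\varphi^{(n)}\rangle$ is a topological embedding of $\mathcal{D}_{\mathbb{C}}^{\hat{\otimes}n}$ into $\mathcal{P}(\mathcal{D}')$ in the natural topology introduced on the polynomial space. This relies on the fact that the kernels $C_{n}^{\sigma,\beta}(w)$ give rise, via Proposition~\ref{prop:generalized-appell-polynomials-inf-dim}-(P2), to a change of basis in $\mathcal{P}(\mathcal{D}')$ that is continuous and invertible componentwise, so that the image of a convergent sequence in $\mathcal{D}_{\mathbb{C}}^{\hat{\otimes}n}$ is a convergent sequence in $\mathcal{P}(\mathcal{D}')$ concentrated at fixed degree $n$. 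Once this topological point is settled, biorthogonality does all the remaining work.
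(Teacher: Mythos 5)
Your proposal is correct and follows essentially the same route as the source the paper cites for this theorem (\cite{KdSS98}; the paper itself omits the proof): define $\langle\Phi^{(n)},\varphi^{(n)}\rangle$ by pairing $\Phi$ with $\langle C_{n}^{\sigma,\beta}(\cdot),\varphi^{(n)}\rangle$, use the fact that every element of $\mathcal{P}(\mathcal{D}')$ has a finite expansion in the $\mathbb{P}^{\sigma,\beta,\alpha}$-system so that all pairings reduce to finite sums, and let biorthogonality (Theorem~\ref{thm:biorthogonal-property-inf-dim}) deliver existence, uniqueness, and the converse. The only blemish is the typo $\varphi^{(k)}$ for $\varphi^{(n)}$ in the existence display; the continuity point you flag is indeed settled by the triangular, componentwise-continuous change of basis coming from Proposition~\ref{prop:generalized-appell-polynomials-inf-dim}-(P1)/(P2).
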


\section{Test and Generalized Function Spaces}

\label{sec:test-and-generalized-function-spaces}In this section,
we construct the test function space and the generalized function
space associated to the fPm $\pi_{\sigma}^{\beta}$ and study some
properties. Here, we consider a nuclear triple 
\[
\underset{p\in\mathbb{N}}{\mathrm{pr\,lim}}\mathcal{H}_{p}=\mathcal{N}\subset L^{2}(\sigma)\subset\mathcal{N}'=\underset{p\in\mathbb{N}}{\mathrm{ind\,lim}}\mathcal{H}_{-p},
\]
as described in Section~\ref{sec:Nuclear-spaces} such that 
\[
\mathcal{D}\subset\mathcal{N}\subset L^{2}(\sigma)\subset\mathcal{N}'\subset\mathcal{D}'.
\]
Let $\varphi=\sum_{n=0}^{N}\langle C_{n}^{\sigma,\beta}(w),\varphi^{(n)}\rangle\in\mathcal{P}(\mathcal{D}')$
be given. Then we use the fact that $\mathcal{D}\subset\mathcal{N}$
so that $\varphi^{(n)}\in\mathcal{N}_{\mathbb{C}}^{\hat{\otimes}n}$.
Note that
\[
\mathcal{N}_{\mathbb{C}}^{\hat{\otimes}n}=\underset{p\in\mathbb{N}}{\mathrm{pr\,lim}}\,\mathcal{H}_{p,\mathbb{C}}^{\hat{\otimes}n}
\]
and so $\varphi^{(n)}\in\mathcal{H}_{p,\mathbb{C}}^{\hat{\otimes}n}$
for all $p\in\mathbb{N}$. For each $p,q\in\mathbb{N}$ and $\kappa\in[0,1]$,
we introduce a norm $\|\cdot\|_{p,q,\kappa,\pi_{\sigma}^{\beta}}$
on $\mathcal{P}(\mathcal{D}')$ by 
\[
\|\varphi\|_{p,q,\kappa,\pi_{\sigma}^{\beta}}^{2}:=\sum_{n=0}^{\infty}(n!)^{1+\kappa}2^{nq}|\varphi^{(n)}|_{p}^{2}.
\]
Let $(\mathcal{H}_{p})_{q,\pi_{\sigma}^{\beta}}^{\kappa}$ be the
Hilbert space obtained by completing the space $\mathcal{P}(\mathcal{D}')$
with respect to the norm $\|\cdot\|_{p,q,\kappa,\pi_{\sigma}^{\beta}}^ {}$.
The Hilbert space $(\mathcal{H}_{p})_{q,\pi_{\sigma}^{\beta}}^{\kappa}$
has inner product given by 
\[
(\!(\varphi,\psi)\!)_{q,\pi_{\sigma}^{\beta}}^{\kappa}:=\sum_{n=0}^{\infty}(n!)^{1+\kappa}2^{nq}(\varphi^{(n)},\overline{\psi^{(n)}})_{p},
\]
and admits the representation
\[
(\mathcal{H}_{p})_{q,\pi_{\sigma}^{\beta}}^{\kappa}:=\left\{ \varphi=\sum_{n=0}^{\infty}\langle C_{m}^{\sigma,\beta},\varphi^{(n)}\rangle\in L^{2}(\pi_{\sigma}^{\beta})\;\bigg|\;\|\varphi\|_{p,q,\kappa,\pi_{\sigma}^{\beta}}^{2}=\sum_{n=0}^{\infty}(n!)^{1+\kappa}2^{nq}|\varphi^{(n)}|_{p}^{2}<\infty\right\} .
\]
Then the test function space $(\mathcal{N})_{\pi_{\sigma}^{\beta}}^{\kappa}$
is defined by
\[
(\mathcal{N})_{\pi_{\sigma}^{\beta}}^{\kappa}:=\underset{p,q\in\mathbb{N}}{\mathrm{pr\,lim}}(\mathcal{H}_{p})_{q,\pi_{\sigma}^{\beta}}^{\kappa}.
\]

The test function space $(\mathcal{N})_{\pi_{\sigma}^{\beta}}^{\kappa}$
is a nuclear space which is continuously embedded in $L^{2}(\pi_{\sigma}^{\beta})$.

\begin{example}
\label{exa:normalized-exp-as-test-function}The modified normalized
exponential given in \eqref{eq:normalized-expo-inf-dim-alpha} has
the norm 
\[
\|\mathrm{e}_{\pi_{\sigma}^{\beta}}(\alpha(\varphi);\cdot)\|_{p,q,\kappa,\pi_{\sigma}^{\beta}}^{2}=\sum_{n=0}^{\infty}(n!)^{1+\kappa}2^{nq}\frac{|\varphi|_{p}^{2n}}{(n!)^{2}},\quad\varphi\in\mathcal{N}_{\mathbb{C}}.
\]
\begin{enumerate}
\item If $\kappa=0$, we have 
\[
\|\mathrm{e}_{\pi_{\sigma}^{\beta}}(\alpha(\varphi);\cdot)\|_{p,q,0,\pi_{\sigma}^{\beta}}^{2}=\exp(2^{q}|\varphi|_{p}^{2})<\infty,\qquad\forall\varphi\in\mathcal{N}_{\mathbb{C}}.
\]
\item For $\kappa\in(0,1)$, we use the H{\"o}lder inequality with the
pair $(\frac{1}{\kappa},\frac{1}{1-\kappa})$ and obtain 
\begin{align*}
\|\mathrm{e}_{\pi_{\lambda,\beta}}(\alpha(\varphi);\cdot)\|_{p,q,\kappa,\pi_{\sigma}^{\beta}}^{2} & \leq\left(\sum_{n=0}^{\infty}\left(\frac{1}{2^{n\kappa}}\right)^{\frac{1}{\kappa}}\right)^{\kappa}\left(\sum_{n=0}^{\infty}\left(\frac{\left(2^{\kappa}2^{q}|\varphi|_{p}^{2}\right)^{n}}{(n!)^{1-\kappa}}\right)^{\frac{1}{1-\kappa}}\right)^{1-\kappa}\\
 & =2^{\kappa}\exp\left((1-\kappa)2^{\frac{\kappa+q}{1-\kappa}}|\varphi|_{p}^{\frac{2}{1-\kappa}}\right)<\infty,
\end{align*}
for all $\varphi\in\mathcal{N}_{\mathbb{C}}.$ Thus, $\mathrm{e}_{\pi_{\sigma}^{\beta}}(\alpha(\varphi);\cdot)\in(\mathcal{N})_{\pi_{\sigma}^{\beta}}^{\kappa}$,
$\kappa\in[0,1)$.
\item For $\kappa=1$, we have 
\[
\|\mathrm{e}_{\pi_{\sigma}^{\beta}}(\alpha(\varphi);\cdot)\|_{p,q,\kappa,\pi_{\sigma}^{\beta}}^{2}=\sum_{n=0}^{\infty}2^{nq}|\varphi|_{p}^{2n},\quad\varphi\in\mathcal{N}_{\mathbb{C}}.
\]
Hence, we have $\mathrm{e}_{\pi_{\sigma}^{\beta}}(\alpha(\varphi);\cdot)\notin(\mathcal{N})_{\pi_{\sigma}^{\beta}}^{1}$
if $\varphi\neq0$, but $\mathrm{e}_{\pi_{\sigma}^{\beta}}(\alpha(\varphi);\cdot)\in(\mathcal{H}_{p})_{q,\pi_{\sigma}^{\beta}}^{1}$
if $2^{q}|\varphi|_{p}^{2}<1$. Moreover, the set 
\[
\{\mathrm{e}_{\pi_{\sigma}^{\beta}}(\alpha(\varphi);\cdot)\mid2^{q}|\varphi|_{p}^{2}<1,\varphi\in\mathrm{\mathcal{N}_{\mathbb{C}}}\}
\]
is total in $(\mathcal{H}_{p})_{q,\pi_{\sigma}^{\beta}}^{1}$.
\end{enumerate}
\end{example}

For $\kappa=1$, we collect below the most important properties of
the space $(\mathcal{N})_{\pi_{\sigma}^{\beta}}^{1}$, see \cite{KdSS98}
for the proofs.
\begin{thm}
\begin{description}
\item [{$(i)$}] $(\mathcal{N})_{\pi_{\sigma}^{\beta}}^{1}$ is a nuclear
space.
\item [{$(ii)$}] The topology in $(\mathcal{N})_{\pi_{\sigma}^{\beta}}^{1}$
is uniquely defined by the topology on $\mathcal{N}$, i.e., it does
not depend on the choice of the family of norms $\{|\cdot|_{p}\}$,
$p\in\mathbb{N}$.
\item [{$(iii)$}] There exist $p',q'>0$ such that for all $p\geq p'$,
$q\geq q'$ the topological embedding $(\mathcal{H}_{p})_{q,\pi_{\sigma}^{\beta}}^{1}\subset L^{2}(\pi_{\sigma}^{\beta})$
holds. $(\mathcal{N})_{\pi_{\sigma}^{\beta}}^{1}$ is continuously
and densely embedded in $L^{2}(\pi_{\sigma}^{\beta})$.
\end{description}
\end{thm}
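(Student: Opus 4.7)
My plan is to prove the three assertions in order, essentially by transferring the nuclear-triple structure on $\mathcal{N}$ to the Appell expansion and using the bound in Proposition~\ref{prop:generalized-appell-polynomials-inf-dim}-(P7) as the bridge to $L^{2}(\pi_{\sigma}^{\beta})$.

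\textbf{Part (i), nuclearity.} I would show that for every fixed $p,q\in\mathbb{N}$ one can find $p'\geq p$ and $q'\geq q$ such that the canonical embedding $(\mathcal{H}_{p'})_{q',\pi_{\sigma}^{\beta}}^{1}\hookrightarrow(\mathcal{H}_{p})_{q,\pi_{\sigma}^{\beta}}^{1}$ is Hilbert--Schmidt. Since $\mathcal{N}$ is nuclear, I pick $p'\geq p$ so that $i_{p',p}\colon\mathcal{H}_{p'}\hookrightarrow\mathcal{H}_{p}$ is Hilbert--Schmidt; then $i_{p',p}^{\hat{\otimes}n}$ is Hilbert--Schmidt on the $n$-th symmetric tensor power with $\|i_{p',p}^{\hat{\otimes}n}\|_{HS}\le\|i_{p',p}\|_{HS}^{n}$. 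Taking an orthonormal basis of $(\mathcal{H}_{p'})_{q',\pi_{\sigma}^{\beta}}^{1}$ of the form $(n!)^{-1}2^{-nq'/2}e_{\alpha}^{(n)}$, the squared Hilbert--Schmidt norm of the embedding becomes $\sum_{n}2^{n(q-q')}\|i_{p',p}\|_{HS}^{2n}$, which is finite as soon as $q'>q+2\log_{2}\|i_{p',p}\|_{HS}$. Composing two such embeddings yields a nuclear map, so $(\mathcal{N})_{\pi_{\sigma}^{\beta}}^{1}$ is a nuclear Fr\'echet space.

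\textbf{Part (ii), topology independence.} Here I would note that, once the kernels $C_{n}^{\sigma,\beta}(\cdot)$ are fixed, the coefficients $\varphi^{(n)}$ in the expansion $\varphi=\sum_{n}\langle C_{n}^{\sigma,\beta},\varphi^{(n)}\rangle$ are intrinsic objects in $\mathcal{N}_{\mathbb{C}}^{\hat{\otimes}n}$ independent of the chosen system of Hilbertian seminorms. If $\{|\cdot|_{p}\}$ and $\{\|\cdot\|_{p}\}$ are two such families generating the same projective topology on $\mathcal{N}$, then for every $p$ there exist $p_{1}$ and $C>0$ with $|\cdot|_{p}\leq C\|\cdot\|_{p_{1}}$, and this passes to the symmetric tensor powers with the same constant $C^{n}$; absorbing $C^{n}$ into the weight $2^{nq}$ exhibits the corresponding $\|\cdot\|_{p,q,1,\pi_{\sigma}^{\beta}}$-norms as mutually dominated after a shift in $(p,q)$. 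Hence the projective limit topology is insensitive to the choice of Hilbertian norms on $\mathcal{N}$.

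\textbf{Part (iii), continuous and dense embedding into $L^{2}(\pi_{\sigma}^{\beta})$.} For a polynomial $\varphi=\sum_{n=0}^{N}\langle C_{n}^{\sigma,\beta},\varphi^{(n)}\rangle$ I would apply (P7) to obtain, for any prescribed $p$ and sufficiently large $p'>p$ with $i_{p',p}$ Hilbert--Schmidt,
\[
|\varphi(w)|\leq\sum_{n=0}^{N}|C_{n}^{\sigma,\beta}(w)|_{-p'}|\varphi^{(n)}|_{p'}\leq C_{\varepsilon}\exp(\varepsilon|w|_{-p})\sum_{n=0}^{N}n!\,\varepsilon^{-n}|\varphi^{(n)}|_{p'}.
\]
A Cauchy--Schwarz splitting with weights $2^{nq}$ then gives
\[
|\varphi(w)|^{2}\leq C_{\varepsilon}^{2}\exp(2\varepsilon|w|_{-p})\,\frac{1}{1-2^{-q}\varepsilon^{-2}}\,\|\varphi\|_{p',q,1,\pi_{\sigma}^{\beta}}^{2},
\]
valid as soon as $\varepsilon^{2}>2^{-q}$. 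Integrating against $\pi_{\sigma}^{\beta}$ gives continuity of the embedding provided $\int_{\mathcal{D}'}\exp(2\varepsilon|w|_{-p})\,\mathrm{d}\pi_{\sigma}^{\beta}(w)<\infty$ for some $\varepsilon>0$ and $p$; this is the main obstacle, and I would handle it via the mixture representation~\eqref{eq:fPm-mixture-inf}, reducing to the analogous Poisson integrability $\int\exp(2\varepsilon|w|_{-p})\,\mathrm{d}\pi_{\tau\sigma}(w)$ (which is standard and grows mildly in $\tau$), then invoking the fast decay of the Wright density $W_{-\beta,1-\beta}$ to ensure the outer integral over $\tau$ converges. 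Density of $(\mathcal{N})_{\pi_{\sigma}^{\beta}}^{1}$ in $L^{2}(\pi_{\sigma}^{\beta})$ follows because it contains $\mathcal{P}(\mathcal{D}')$, and polynomials are dense in $L^{2}(\pi_{\sigma}^{\beta})$ by the analyticity of the Laplace transform $l_{\pi_{\sigma}^{\beta}}$ on a neighborhood of zero (assumption (A1)), which via the standard Fourier/entire-function argument forces the $C_{n}^{\sigma,\beta}$-monomials to be total in $L^{2}(\pi_{\sigma}^{\beta})$.
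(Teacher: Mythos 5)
The paper itself offers no proof of this theorem: it simply defers to \cite{KdSS98} (``see \cite{KdSS98} for the proofs''), so there is nothing internal to compare against line by line. Your outline reconstructs the standard argument of \cite{KdSS98,KSWY95} and is essentially correct: the Hilbert--Schmidt computation in (i) with the weight shift $q'>q+2\log_{2}\|i_{p',p}\|_{HS}$, the norm-domination-with-shift argument in (ii), and the pointwise bound in (iii) obtained from (P7) plus a Cauchy--Schwarz splitting (which is exactly Proposition~\ref{prop:test-function-estimate} of the paper) are the right ingredients.

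Two points deserve attention. First, in (iii) you establish continuity of the embedding $(\mathcal{H}_{p})_{q,\pi_{\sigma}^{\beta}}^{1}\subset L^{2}(\pi_{\sigma}^{\beta})$ but not its injectivity: one must still argue that an element whose image vanishes $\pi_{\sigma}^{\beta}$-a.e.\ has vanishing kernels $\varphi^{(n)}$. This is where assumption (A2) enters (the paper notes explicitly that (A2) ``guarantees the embedding of the test function space on $L^{2}(\pi_{\sigma}^{\beta})$''): the pointwise bound shows that test functions extend to continuous (indeed entire) functions, and a continuous function vanishing a.e.\ with respect to a measure charging every nonempty open set vanishes identically, whence the kernels vanish by uniqueness of the Taylor coefficients. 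Second, for the exponential integrability $\int\exp(2\varepsilon|w|_{-p})\,\mathrm{d}\pi_{\sigma}^{\beta}(w)<\infty$ the cleaner route --- and the one the paper itself uses in Example~\ref{exa:RND} --- is Lemma~9 of \cite{KSWY95}, which derives it directly from the holomorphy assumption (A1); your detour through the mixture representation \eqref{eq:fPm-mixture-inf} is workable but hides the real difficulty in the claim that the Poisson exponential moment ``grows mildly in $\tau$'': that claim is of the same nature as the estimate you are trying to prove, and you would need the explicit bound $\int\exp(2\varepsilon|w|_{-p})\,\mathrm{d}\pi_{\tau\sigma}(w)\leq C\mathrm{e}^{c\tau}$ for suitable $p$ and $\varepsilon$ before the superexponential decay of $W_{-\beta,1-\beta}$ can be invoked to close the outer integral.
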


\begin{prop}
\label{prop:test-function-estimate}Any test function $\varphi$ in
$(\mathcal{N})_{\pi_{\sigma}^{\beta}}^{1}$ has a uniquely defined
extension to $\mathcal{N}'_{\mathbb{C}}$ as an element of $\mathcal{E}_{\min}^{1}(\mathcal{N}'_{\mathbb{C}})$.
For all $p>p'$ such that the embedding $\mathcal{H}_{p}\hookrightarrow\mathcal{H}_{p'}$
is of the Hilbert-Schmidt class and for all $\varepsilon>0$, $p\in\mathbb{N}$,
we obtain the following bound 
\[
|\varphi(w)|\leq C\|\varphi\|_{p,q,1,\pi_{\sigma}^{\beta}}\mathrm{e}^{\varepsilon|w|_{-p'}},\qquad\varphi\in(\mathcal{N})_{\pi_{\sigma}^{\beta}}^{1},\:w\in\mathcal{H}_{-p,\mathbb{C}},
\]
where $2^{q}>(\varepsilon\|i_{p',p}\|_{HS})^{-2}$ and
\[
C=C_{\varepsilon}(1-2^{-q}\varepsilon^{-2})^{-1/2}.
\]
\end{prop}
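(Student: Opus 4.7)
The plan is to insert the kernel estimate of Proposition~\ref{prop:generalized-appell-polynomials-inf-dim}-(P7) into the generalized Appell expansion $\varphi=\sum_{n=0}^{\infty}\langle C_n^{\sigma,\beta},\varphi^{(n)}\rangle$ of an arbitrary $\varphi\in(\mathcal{N})_{\pi_{\sigma}^{\beta}}^{1}$, and then apply Cauchy--Schwarz in $n$ to turn the weighted $\ell^{2}$-control $\sum_n (n!)^2 2^{nq}|\varphi^{(n)}|_p^2 <\infty$ into the desired pointwise bound on $\mathcal{H}_{-p',\mathbb{C}}$ (extended, by the definition of $|\cdot|_{-p'}$, to the whole domain stated).

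Concretely, for $p>p'$ with $i_{p',p}$ Hilbert--Schmidt, (P7) (with its internal labels renamed to the convention of the proposition) yields an estimate of the form $|C_n^{\sigma,\beta}(w)|_{-p}\leq C_\varepsilon\, n!\,\varepsilon^{-n}\mathrm{e}^{\varepsilon|w|_{-p'}}$, after absorbing $\|i_{p',p}\|_{HS}$ into the free scale parameter $\sigma_\varepsilon$ used in the proof of (P7). Combining this with the dual pairing $|\langle C_n^{\sigma,\beta}(w),\varphi^{(n)}\rangle|\leq |C_n^{\sigma,\beta}(w)|_{-p}\,|\varphi^{(n)}|_p$ and splitting the series as
\[
n!\,\varepsilon^{-n}|\varphi^{(n)}|_p \;=\; \bigl(n!\,2^{nq/2}|\varphi^{(n)}|_p\bigr)\cdot\bigl(2^{-nq/2}\varepsilon^{-n}\bigr),
\]
Cauchy--Schwarz in $n$ bounds the sum by $\|\varphi\|_{p,q,1,\pi_\sigma^\beta}$ times the square root of the geometric tail $\sum_{n\geq 0}(2^{q}\varepsilon^{2})^{-n}$. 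This tail converges precisely when $2^{q}\varepsilon^{2}>1$, producing the factor $(1-2^{-q}\varepsilon^{-2})^{-1/2}$ and matching the stated constant $C=C_\varepsilon(1-2^{-q}\varepsilon^{-2})^{-1/2}$; the $\|i_{p',p}\|_{HS}$ in the condition $2^{q}>(\varepsilon\|i_{p',p}\|_{HS})^{-2}$ is merely a record of where the Hilbert--Schmidt norm has been parked (in $\sigma_\varepsilon$ versus in $C_\varepsilon$) during the invocation of (P7).

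Entireness, minimal type, and uniqueness then come essentially for free. Each partial sum $\sum_{n=0}^{N}\langle C_n^{\sigma,\beta}(w),\varphi^{(n)}\rangle$ is a polynomial, hence an entire function on $\mathcal{H}_{-p',\mathbb{C}}$; the uniform-on-bounded-sets convergence produced by the above estimate makes the limit $\tilde\varphi$ entire there, and since $\varepsilon>0$ is arbitrary while the exponent is linear in $|w|_{-p'}$, $\tilde\varphi$ has growth $1$ and minimal type, so it lies in every $\mathcal{E}^{1}_{2^{-l}}(\mathcal{H}_{-p',\mathbb{C}})$ and therefore, by the projective-limit definition, in $\mathcal{E}_{\min}^{1}(\mathcal{N}'_{\mathbb{C}})$. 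Uniqueness of the extension follows because it is obtained from the (unique) Appell expansion of $\varphi$ by an absolutely convergent series, and it agrees with $\varphi$ as an element of $L^{2}(\pi_\sigma^\beta)$. The only real obstacle is keeping the two index pairs $(p,p')$ and $(q,\varepsilon)$ together with $\|i_{p',p}\|_{HS}$ in the correct slots so that the constant $C$ and the threshold on $2^{q}$ come out in the exact form stated.
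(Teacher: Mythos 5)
The paper gives no proof of this proposition (it is part of the block of properties for which the proofs are deferred to \cite{KdSS98,KSWY95}), and your argument is exactly the standard one used there: insert the kernel bound (P7) into the Appell expansion, apply Cauchy--Schwarz in $n$ against the weight $(n!)^{2}2^{nq}$, and sum the resulting geometric series, with entireness and minimal type following from locally uniform convergence and the arbitrariness of $\varepsilon$. Your reading of the $\|i_{p',p}\|_{HS}$ factor is also right: the stated threshold $2^{q}>(\varepsilon\|i_{p',p}\|_{HS})^{-2}$ and the stated constant $(1-2^{-q}\varepsilon^{-2})^{-1/2}$ correspond to two different normalizations of the free radius $\sigma_{\varepsilon}$ in (P7), a cosmetic inconsistency inherited from the cited statement rather than a gap in your proof.
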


For each $p,q\in\mathbb{N}$ and $\kappa\in[0,1]$, we denote by $(\mathcal{H}_{-p})_{-q,\pi_{\sigma}^{\beta}}^{-\kappa}$
the Hilbert space dual of the space $(\mathcal{H}_{p})_{q,\pi_{\sigma}^{\beta}}^{\kappa}$
with respect to $L^{2}(\pi_{\sigma}^{\beta})$ with the corresponding
(Hilbert) norm $\|\cdot\|_{-p,-q,\kappa,\pi_{\sigma}^{\beta}}$. This
space admits the following representation
\[
(\mathcal{H}_{-p})_{-q,\pi_{\sigma}^{\beta}}^{-\kappa}:=\left\{ \Phi=\sum_{n=0}^{\infty}Q_{n}^{\sigma,\beta,\alpha}(\Phi^{(n)})\in\mathcal{P}'_{\pi_{\sigma}^{\beta}}(\mathcal{D}')\,\middle|\,\|\Phi\|_{-p,-q,\kappa,\pi_{\sigma}^{\beta}}^{2}:=\sum_{n=0}^{\infty}(n!)^{1-\kappa}2^{-nq}|\Phi^{(n)}|_{-p}^{2}<\infty\right\} .
\]
By the general duality theory, the dual space $(\mathcal{N})_{\pi_{\sigma}^{\beta}}^{-\kappa}$
of $(\mathcal{N})_{\pi_{\sigma}^{\beta}}^{\kappa}$ with respect to
$L^{2}(\pi_{\sigma}^{\beta})$ is then given by
\[
(\mathcal{N})_{\pi_{\sigma}^{\beta}}^{-\kappa}:=\bigcup_{p,q\in\mathbb{N}}(\mathcal{H}_{-p})_{-q,\pi_{\sigma}^{\beta}}^{-\kappa}.
\]
Since $\mathcal{P}(\mathcal{D}')\subset(\mathcal{N})_{\pi_{\sigma}^{\beta}}^{\kappa}$,
the space $(\mathcal{N})_{\pi_{\sigma}^{\beta}}^{-\kappa}$ can be
viewed as a subspace of $\mathcal{P}'_{\pi_{\sigma}^{\beta}}(\mathcal{D}')$
and so we extend the triple in \eqref{eq:triple-P(D)-L2} to the chain
of spaces
\[
\mathcal{P}(\mathcal{D}')\subset(\mathcal{N})_{\pi_{\sigma}^{\beta}}^{\kappa}\subset L^{2}(\pi_{\sigma}^{\beta})\subset(\mathcal{N})_{\pi_{\sigma}^{\beta}}^{-\kappa}\subset\mathcal{P}'_{\pi_{\sigma}^{\beta}}(\mathcal{D}').
\]
The action of a distribution 
\[
\Phi=\sum_{n=0}^{\infty}Q_{n}^{\sigma,\beta,\alpha}(\Phi^{(n)})\in(\mathcal{N})_{\pi_{\sigma}^{\beta}}^{-\kappa}
\]
on a test function 
\[
\varphi=\sum_{n=0}^{\infty}\langle C_{n}^{\sigma,\beta}(w),\varphi^{(n)}\rangle\in(\mathcal{N})_{\pi_{\sigma}^{\beta}}^{\kappa}
\]
using the biorthogonal property in Theorem \ref{thm:biorthogonal-property-inf-dim}
is given by 
\[
\langle\!\langle\Phi,\varphi\rangle\!\rangle_{\pi_{\sigma}^{\beta}}=\sum_{n=0}^{\infty}n!\langle\Phi^{(n)},\varphi^{(n)}\rangle.
\]

Now we give two examples of the generalized functions in $(\mathcal{N})_{\pi_{\sigma}^{\beta}}^{-1}$.
For a more generalized case, see \cite{KdSS98}.
\begin{example}[Generalized Radon-Nikodym derivative]
\label{exa:RND} We define a generalized function $\rho_{\pi_{\sigma}^{\beta}}^{\alpha}(w,\cdot)\in(\mathcal{N})_{\pi_{\sigma}^{\beta}}^{-1}$,
$w\in\mathcal{N}'_{\mathbb{C}}$ with the following property 
\[
\langle\!\langle\rho_{\pi_{\sigma}^{\beta}}^{\alpha}(w,\cdot),\varphi\rangle\!\rangle_{\pi_{\sigma}^{\beta}}=\int_{\mathcal{N}'}\varphi(x-w)\,\mathrm{d}\mu(x),\quad\varphi\in(\mathcal{N})_{\pi_{\sigma}^{\beta}}^{1}.
\]
First, we have to establish the continuity of $\rho_{\pi_{\sigma}^{\beta}}^{\alpha}(w,\cdot)$.
Let $w\in\mathcal{H}_{-p,\mathbb{C}}$ be given. Then, if $p\geq p'$
is sufficiently large and $\varepsilon>0$ is small enough, we use
Proposition~\ref{prop:test-function-estimate}, that is, there exists
$q\in\mathbb{N}$ and $C>0$ such that 
\begin{align*}
{\displaystyle \left|\int_{\mathcal{N}'}\varphi(x-w)\,\mathrm{d}\pi_{\sigma}^{\beta}(x)\right|} & \leq{\displaystyle C\|\varphi\|_{p,q,1,\pi_{\sigma}^{\beta}}\int_{\mathcal{N}'}\exp(\varepsilon|x-w|_{-p'})\,\mathrm{d}\pi_{\sigma}^{\beta}(x)}\\
 & {\displaystyle \leq C\|\varphi\|_{p,q,1,\pi_{\sigma}^{\beta}}\exp(\varepsilon|w|_{-p'})\int_{\mathcal{N}'}\exp(\varepsilon|x|_{-p'})\,\mathrm{d}\pi_{\sigma}^{\beta}(x).}
\end{align*}
Since $\varepsilon$ is sufficiently small, the last integral exists
by Lemma 9 from \cite{KSWY95}. This implies that $\rho_{\pi_{\sigma}^{\beta}}^{\alpha}(w,\cdot)\in(\mathcal{N})_{\pi_{\sigma}^{\beta}}^{-1}$.
Let us show that in $(\mathcal{N})_{\pi_{\sigma}^{\beta}}^{-1}$ the
generalized function $\rho_{\pi_{\sigma}^{\beta}}^{\alpha}(w,\cdot)$
admits the canonical expansion 
\begin{equation}
\rho_{\pi_{\sigma}^{\beta}}^{\alpha}(w,\cdot)=\sum_{k=0}^{\infty}\frac{1}{k!}Q_{n}^{\sigma,\beta,\alpha}((-w)_{k}).\label{eq:rho-alpha-expansion}
\end{equation}
Note that the right hand side of \eqref{eq:rho-alpha-expansion} defines
an element in $(\mathcal{N})_{\pi_{\sigma}^{\beta}}^{-1}$. Then it
is sufficient to compare the action of both sides of \eqref{eq:rho-alpha-expansion}
on a total set from $(\mathcal{N})_{\pi_{\sigma}^{\beta}}^{1}$. For
$\varphi^{(n)}\in\mathcal{D}_{\mathbb{C}}^{\hat{\otimes}n}$, we use
the biorthogonal property of $\mathbb{P}^{\sigma,\beta,\alpha}$ and
$\mathbb{Q}^{\sigma,\beta,\alpha}$-systems and obtain 
\begin{align*}
\langle\!\langle\rho_{\pi_{\sigma}^{\beta}}^{\alpha}(w,\cdot),\langle C_{n}^{\sigma,\beta},\varphi^{(n)}\rangle\rangle\!\rangle_{\pi_{\sigma}^{\beta}} & =\bigg\langle\!\!\!\bigg\langle\sum_{k=0}^{\infty}\frac{1}{k!}Q_{n}^{\sigma,\beta,\alpha}((-w)_{k}),\langle C_{n}^{\sigma,\beta},\varphi^{(n)}\rangle\bigg\rangle\!\!\!\bigg\rangle_{\pi_{\sigma}^{\beta}}\\
 & =\langle(-w)_{n},\varphi^{(n)}\rangle.
\end{align*}
On the other hand, by Proposition \ref{prop:generalized-appell-polynomials-inf-dim}-(P4)
and (P6),
\begin{align*}
\langle\!\langle\rho_{\pi_{\sigma}^{\beta}}^{\alpha}(w,\cdot),\langle C_{n}^{\sigma,\beta},\varphi^{(n)}\rangle\rangle\!\rangle_{\pi_{\sigma}^{\beta}} & ={\displaystyle \int}_{\mathcal{D}'}\langle C_{n}^{\sigma,\beta}(x-w),\varphi^{(n)}\rangle\,\mathrm{d}\pi_{\sigma}^{\beta}(x)\\
 & {\displaystyle \negthickspace\overset{(P4)}{=}\sum_{k=0}^{\infty}}\binom{n}{k}{\displaystyle \int}_{\mathcal{D}'}\langle C_{n}^{\sigma,\beta}(x)\hat{\otimes}(-w)_{n-k},\varphi^{(n)}\rangle\,\mathrm{d}\pi_{\sigma}^{\beta}(x)\\
 & {\displaystyle =\sum_{k=0}^{\infty}}\binom{n}{k}\mathbb{E}_{\pi_{\sigma}^{\beta}}(\langle C_{n}^{\sigma,\beta}(x)\hat{\otimes}(-w)_{n-k},\varphi^{(n)}\rangle)\\
 & \negthickspace\negthickspace\overset{(P6)}{=}\langle(-w)_{n},\varphi^{(n)}\rangle.
\end{align*}
Thus, we have shown that $\rho_{\pi_{\sigma}^{\beta}}^{\alpha}(w,\cdot)$
is the generating function of the $\mathbb{Q}^{\sigma,\beta,\alpha}$-system,
i.e.,
\[
\rho_{\pi_{\sigma}^{\beta}}^{\alpha}(-w,\cdot)=\sum_{k=0}^{\infty}\frac{1}{k!}Q_{k}^{\sigma,\beta,\alpha}((w)_{k}).
\]
\end{example}

\begin{example}[Delta function]
 For $w\in\mathcal{N}'_{\mathbb{C}}$, we define a distribution by
the following $\mathbb{Q}^{\sigma,\beta,\alpha}$-decomposition:
\[
\delta_{w}=\sum_{n=0}^{\infty}Q_{n}^{\sigma,\beta,\alpha}(C_{n}^{\sigma,\beta}(w)).
\]
If $p\in\mathbb{N}$ is large enough and $\varepsilon>0$ is sufficiently
small, by Proposition \ref{prop:generalized-appell-polynomials-inf-dim}-(P7),
for any $w\in\mathcal{H}_{-p,\mathbb{C}}$ we have
\[
\|\delta_{w}\|_{-p,-q,\pi_{\sigma}^{\beta},\alpha}^{2}=\sum_{n=0}^{\infty}2^{-nq}|C_{n}^{\sigma,\beta}(w)|_{-p}^{2}\overset{(P7)}{\leq}C_{\varepsilon}^{2}\exp(2\varepsilon|w|_{-p})\sum_{n=0}^{\infty}\varepsilon^{-2n}2^{-nq}
\]
which is finite for sufficiently large $q\in\mathbb{N}$. This implies
that $\delta_{w}\in(\mathcal{N})_{\pi_{\sigma}^{\beta}}^{-1}$. Now,
for 
\[
\varphi=\sum_{n=0}^{\infty}\langle C_{n}^{\sigma,\beta},\varphi^{(n)}\rangle\in(\mathcal{N})_{\pi_{\sigma}^{\beta}}^{1}
\]
 the action of $\delta_{w}$ is given by 
\[
\langle\!\langle\delta_{w},\varphi\rangle\!\rangle_{\pi_{\sigma}^{\beta}}=\sum_{n=0}^{\infty}\langle C_{n}^{\sigma,\beta}(w),\varphi^{(n)}\rangle=\varphi(w)
\]
using the biorthogonal property of $\mathbb{P}^{\sigma,\beta,\alpha}$
and $\mathbb{Q}^{\sigma,\beta,\alpha}$-systems. This means that $\delta_{w}$
(in particular for $w$ real) plays the role of a $\delta$-function
(evaluation map) in the calculus we discuss.
\end{example}

Recall Example \ref{exa:normalized-exp-as-test-function} where the
modified normalized exponential $\mathrm{e}_{\pi_{\sigma}^{\beta}}(\alpha(\varphi);\cdot)$
is a test function in $(\mathcal{N})_{\pi_{\sigma}^{\beta}}^{1}$
only if $2^{q}|\varphi|_{p}^{2}<1$ for $\varphi\in\mathcal{N}_{\mathbb{C}}$.
We define the $S_{\pi_{\sigma}^{\beta}}$-transform of a distribution
$\Phi\in(\mathcal{N})_{\pi_{\sigma}^{\beta}}^{-1}\subset\mathcal{P}'_{\pi_{\sigma}^{\beta}}(\mathcal{D}')$
by 
\[
S_{\pi_{\sigma}^{\beta}}\Phi(\varphi):=\langle\!\langle\Phi,\mathrm{e}_{\pi_{\sigma}^{\beta}}(\varphi;\cdot)\rangle\!\rangle_{\pi_{\sigma}^{\beta}}
\]
if $\varphi$ is chosen in the above way. By the biorthogonal property
of $\mathbb{P}^{\sigma,\beta,\alpha}$ and $\mathbb{Q}^{\sigma,\beta,\alpha}$-systems,
we have 
\[
S_{\pi_{\sigma}^{\beta}}\Phi(\varphi)=\sum_{n=0}^{\infty}\langle\Phi^{(n)},g_{\alpha}(\varphi)^{\otimes n}\rangle.
\]

Now, we introduce the convolution of a function $\varphi\in(N)_{\pi_{\sigma}^{\beta}}^{1}$,
with respect to the measure $\pi_{\sigma}^{\beta}$ given by 
\[
C_{\pi_{\sigma}^{\beta}}\varphi(w)=\langle\!\langle\rho_{\pi_{\sigma}^{\beta}}^{\alpha}(-w,\cdot),\varphi\rangle\!\rangle_{\pi_{\sigma}^{\beta}},
\]
where $\rho_{\pi_{\sigma}^{\beta}}^{\alpha}(-w,\cdot)\in(N)_{\pi_{\sigma}^{\beta}}^{-1}$
is the generalized Radon-Nikodym derivative (see Example~\ref{exa:RND})
for any $w\in\mathcal{N}'_{\mathbb{C}}$. If $\varphi$ has the representation
\[
\varphi=\sum_{n=0}^{\infty}\langle C_{n}^{\sigma,\beta}(w),\varphi^{(n)}\rangle\in(\mathcal{N})_{\pi_{\sigma}^{\beta}}^{1},
\]
then the action of $C_{\pi_{\sigma}^{\beta}}$ on $\varphi$ is given,
for every $w\in\mathcal{N}'_{\mathbb{C}}$, by 
\[
C_{\pi_{\sigma}^{\beta}}\varphi(w)=\sum_{n=0}^{\infty}\langle(-w)_{n},\varphi^{(n)}\rangle\overset{\eqref{eq:Stirling-1st-inner-product}}{=}\sum_{n=0}^{\infty}\sum_{k=0}^{n}(-1)^{k}\langle w^{\otimes k},\mathbf{s}(n,k)\varphi^{(n)}\rangle.
\]
The following is the characterization theorem of the test and generalized
function spaces associated to the fPm which is a standard result for
this approach. For the proof, we refer to \cite{KSWY95,KdSS98}.

\begin{thm}
\label{thm:characterizations}
\begin{description}
\item [{$(i)$}] The convolution $C_{\pi_{\sigma}^{\beta}}$ is a topological
isomorphism from $(\mathcal{N})_{\pi_{\sigma}^{\beta}}^{1}$ on $\mathcal{E}_{\min}^{1}(\mathcal{N}'_{\mathbb{C}})$.
\item [{$(ii)$}] The $S_{\pi_{\sigma}^{\beta}}$-transform is a topological
isomorphism from $(\mathcal{N})_{\pi_{\sigma}^{\beta}}^{-1}$ on $\mathrm{Hol}_{0}(\mathcal{N}_{\mathbb{C}})$.
\item [{$(iii)$}] The $S_{\pi_{\sigma}^{\beta}}$-transform is a topological
isomorphism from $(\mathcal{N})_{\pi_{\sigma}^{\beta}}^{-\kappa}$,
$\kappa\in[0,1)$, on $\mathcal{E}_{\max}^{2/(1-\kappa)}(\mathcal{N}_{\mathbb{C}})$.
\end{description}
\end{thm}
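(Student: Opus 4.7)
The plan is to follow the standard biorthogonal scheme from \cite{KSWY95,KdSS98}, proving each of the three parts by matching the sequence-of-kernels norms defining $(\mathcal{N})_{\pi_\sigma^\beta}^{\pm\kappa}$ against the growth/type seminorms defining the target spaces of entire or holomorphic functions. In each case the strategy is the same: show the transform is well-defined (continuity from one side), invert it via a Taylor expansion at zero, and control the change of basis between the monomials $w^{\otimes n}$ and the kernels $C_n^{\sigma,\beta}(w)$ or $(-w)_n$ through the Stirling operators.

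For (i), starting from $\varphi=\sum_n\langle C_n^{\sigma,\beta}(\cdot),\varphi^{(n)}\rangle\in(\mathcal{N})_{\pi_\sigma^\beta}^{1}$, the biorthogonality of $\mathbb{P}^{\sigma,\beta,\alpha}$ and $\mathbb{Q}^{\sigma,\beta,\alpha}$ together with the expansion $\rho_{\pi_\sigma^\beta}^\alpha(-w,\cdot)=\sum_n\frac{1}{n!}Q_n^{\sigma,\beta,\alpha}((w)_n)$ from Example~\ref{exa:RND} yields the explicit representation $C_{\pi_\sigma^\beta}\varphi(w)=\sum_n\langle(-w)_n,\varphi^{(n)}\rangle$. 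First I would derive a bound $|(-w)_n|_{-p'}\le n!\,\varepsilon^{-n}\exp(\varepsilon|w|_{-p})$ analogous to Proposition~\ref{prop:generalized-appell-polynomials-inf-dim}-(P7), applying the Cauchy estimate to the generating function \eqref{eq:generating-function-f-factorial-inf-dim} on a polydisc of radius $\sigma_\varepsilon$ chosen so that $|g_\alpha(\varphi)|_p\le\varepsilon$ for $|\varphi|_p=\sigma_\varepsilon$. Cauchy--Schwarz against the weighted $\ell^2$ norm $\|\varphi\|_{p,q,1,\pi_\sigma^\beta}$ then produces $|C_{\pi_\sigma^\beta}\varphi(w)|\le C\,\|\varphi\|_{p,q,1,\pi_\sigma^\beta}\,\exp(\varepsilon|w|_{-p'})$ for every $\varepsilon>0$, placing the image in $\mathcal{E}_{\min}^{1}(\mathcal{N}'_\mathbb{C})$. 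For surjectivity, I take $F\in\mathcal{E}_{\min}^{1}(\mathcal{N}'_\mathbb{C})$, Taylor-expand $F(w)=\sum_n\frac{1}{n!}\langle F^{(n)},w^{\otimes n}\rangle$ with kernels satisfying $|F^{(n)}|_{-p}\le n!\,\varepsilon^n$ for arbitrary $\varepsilon>0$ (by minimal type and the Cauchy/polarization formulas), and convert to the falling-factorial basis via \eqref{eq:falling-factorial-Stirling} to read off the candidate $\varphi^{(n)}$; the Stirling bounds from Appendix~\ref{sec:Stirling-Operators} verify that $\sum_n(n!)^{2}2^{nq}|\varphi^{(n)}|_p^{2}<\infty$ for suitable $p,q$, and injectivity follows because the family $\{\mathrm{e}_{\pi_\sigma^\beta}(\alpha(\varphi);\cdot)\mid\varphi\in\mathcal{U}_\alpha'\}$ is total in $L^2(\pi_\sigma^\beta)$. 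Both norm estimates being explicit, bicontinuity is automatic.

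For (ii) and (iii), I use the dual representation $\Phi=\sum_n Q_n^{\sigma,\beta,\alpha}(\Phi^{(n)})\in(\mathcal{N})_{\pi_\sigma^\beta}^{-\kappa}$ and the identity $S_{\pi_\sigma^\beta}\Phi(\varphi)=\sum_n\langle\Phi^{(n)},g_\alpha(\varphi)^{\otimes n}\rangle$, which follows from the biorthogonality applied to the expansion \eqref{eq:normalized-exp-C-g} of $\mathrm{e}_{\pi_\sigma^\beta}(\varphi;\cdot)$. Since $g_\alpha$ is holomorphic with $g_\alpha(0)=0$, for $\varphi$ in a small neighborhood of zero $|g_\alpha(\varphi)|_p\le 2|\varphi|_p$, and bounding $|\langle\Phi^{(n)},g_\alpha(\varphi)^{\otimes n}\rangle|\le|\Phi^{(n)}|_{-p}(2|\varphi|_p)^{n}$ against $\sum_n(n!)^{1-\kappa}2^{-nq}|\Phi^{(n)}|_{-p}^{2}<\infty$ gives a holomorphic function on some neighborhood (part (ii) for $\kappa=1$). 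For $\kappa<1$ I split the factor using the Hölder pair $(\tfrac{2}{1-\kappa},\tfrac{2}{1+\kappa})$ exactly as in Example~\ref{exa:normalized-exp-as-test-function}, which produces the sharp growth order $2/(1-\kappa)$ and finite type, embedding the image in $\mathcal{E}_{\max}^{2/(1-\kappa)}(\mathcal{N}_\mathbb{C})$. The inverse direction again proceeds via Taylor expansion of the given holomorphic function $F$ and conversion through the Stirling operator $\mathbf{S}$ from \eqref{eq:decomposition-g-alpha^k}, using Proposition~\ref{prop:Stirling-1st-2nd-kind-inf-dim} (inversion $\mathbf{S}\circ\mathbf{s}=\mathrm{id}$) to recover $\Phi^{(n)}$ and verify the dual norm is finite.

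The main obstacle is the Stirling bookkeeping between the monomial basis natural for Taylor expansions of entire functions on $\mathcal{N}'_\mathbb{C}$ or $\mathcal{N}_\mathbb{C}$ and the $(-w)_n$ or $C_n^{\sigma,\beta}$ bases natural for the spaces $(\mathcal{N})_{\pi_\sigma^\beta}^{\pm\kappa}$: one must show that the operator norms $\|\mathbf{s}(n,m)\|$ and $\|\mathbf{S}(n,m)\|$ (between suitable Hilbert-Schmidt-separated $\mathcal{H}_{p,\mathbb{C}}^{\hat\otimes k}$) grow slowly enough in $n$ that they can be absorbed by a small loss in $p$ or $q$ on the $\ell^2$ side. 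Once these quantitative Stirling bounds from Appendix~\ref{sec:Stirling-Operators} are in hand, the three parts become three instances of the same Cauchy-estimate-plus-inversion scheme, distinguished only by the weight sequence $(n!)^{1\pm\kappa}2^{\pm nq}$ and the corresponding growth class.
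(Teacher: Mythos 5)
The paper itself offers no proof of Theorem \ref{thm:characterizations}; it simply refers to \cite{KSWY95,KdSS98}, and your sketch follows the same general scheme used in those references (explicit kernel representations of $C_{\pi_{\sigma}^{\beta}}$ and $S_{\pi_{\sigma}^{\beta}}$, Cauchy estimates in one direction, Taylor expansion at zero plus a change of basis for the inverse). So the overall architecture is sound. There are, however, two places where your plan as written has a genuine hole.

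First, your surjectivity step converts the Taylor kernels $F^{(n)}$ of a given entire function into the $(-w)_{n}$- or $C_{n}^{\sigma,\beta}$-basis through the Stirling operators and asserts that ``the Stirling bounds from Appendix~\ref{sec:Stirling-Operators}'' verify the resulting weighted $\ell^{2}$ conditions. But the Appendix contains only the algebraic formulas \eqref{eq:Stirling-first-D-property}--\eqref{eq:Stirling-second-D-property} and the inversion identity of Proposition~\ref{prop:Stirling-1st-2nd-kind-inf-dim}; it contains no operator-norm estimates for $\mathbf{s}(n,k)$ or $\mathbf{S}(n,k)$ between the spaces $\mathcal{H}_{p,\mathbb{C}}^{\hat{\otimes}n}$ and $\mathcal{H}_{p',\mathbb{C}}^{\hat{\otimes}k}$. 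You correctly identify this as the main obstacle, but it is left unproved, and it is exactly the step the cited references avoid: there one first establishes the characterization for the unmodified Appell system ($\alpha=\mathrm{id}$) and then observes that passing to the $\alpha$-modified system amounts to composing with the local biholomorphism $\alpha$ (resp.\ $g_{\alpha}$) at zero, which acts as an automorphism of the relevant spaces of germs; no termwise Stirling estimates are required. Second, in part $(iii)$ your H\"older/Cauchy--Schwarz bound controls $\sum_{n}\langle\Phi^{(n)},\psi^{\otimes n}\rangle$ in terms of $|\psi|_{p}$, and you then substitute $\psi=g_{\alpha}(\varphi)$. Since $|g_{\alpha}(\varphi)|_{p}=|\mathrm{e}^{\varphi}-1|_{p}$ grows exponentially, not linearly, in $|\varphi|_{p}$, this does not place $S_{\pi_{\sigma}^{\beta}}\Phi$ itself in $\mathcal{E}_{\max}^{2/(1-\kappa)}(\mathcal{N}_{\mathbb{C}})$; the finite-order, finite-type conclusion holds for $S_{\pi_{\sigma}^{\beta}}\Phi\circ\alpha$, i.e.\ for the pairing with the modified exponential $\mathrm{e}_{\pi_{\sigma}^{\beta}}(\alpha(\varphi);\cdot)$, which is how the isomorphism must be read and how it is proved in \cite{KdSS98}. (The local estimate $|g_{\alpha}(\varphi)|_{p}\leq2|\varphi|_{p}$ used in $(ii)$ also tacitly assumes the norms $|\cdot|_{p}$ are submultiplicative for $p$ large; this should be stated.)
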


\section{Conclusion and Outlook}

In this paper, we constructed the generalized Appell system $\mathbb{A}^{\sigma,\beta,\alpha}=(\mathbb{P}^{\sigma,\beta,\alpha},\mathbb{Q}^{\sigma,\beta,\alpha})$
associated to the fPm $\pi_{\sigma}^{\beta}$ in infinite dimension.
The Appell polynomials $\mathbb{P}^{\sigma,\beta,\alpha}$ (generated
by the modified Wick exponential) and the dual Appell system $\mathbb{Q}^{\sigma,\beta,\alpha}$
are biorthogonal to each other, see Theorem~\ref{thm:biorthogonal-property-inf-dim}.
It turns out that the kernels $C_{n}^{\sigma,\beta}(\cdot)$ of the
system $\mathbb{P}^{\sigma,\beta,\alpha}$ are given in terms of the
Stirling operators or in terms of the falling factorials on $\mathcal{D}'_{\mathbb{C}}$,
see Proposition~\ref{prop:generalized-appell-polynomials-inf-dim}.
The system $\mathbb{P}^{\sigma,\beta,\alpha}$ is used to define the
spaces of test functions $(\mathcal{N})_{\pi_{\sigma}^{\beta}}^{\kappa}$,
$0\leq\kappa\leq1$, while $\mathbb{Q}^{\sigma,\beta,\alpha}$ is
suitable to describe the generalized functions spaces $(\mathcal{N})_{\pi_{\sigma}^{\beta}}^{-\kappa}$
arising from $\pi_{\sigma}^{\beta}$, see Section~\ref{sec:test-and-generalized-function-spaces}.
The spaces $(\mathcal{N})_{\pi_{\sigma}^{\beta}}^{\kappa}$ and $(\mathcal{N})_{\pi_{\sigma}^{\beta}}^{-\kappa}$
are universal in the sense that their characterization via $S_{\pi_{\sigma}^{\beta}}$-transform
is independent of the measure $\pi_{\sigma}^{\beta}$ (see Theorem~\ref{thm:characterizations})
as is well known from non-Gaussian analysis, \cite{KSWY95,KdSS98}.

In a future work we plan to investigate the stochastic counterpart
associated to the fPm, namely the fractional Poisson process $N_{\lambda}^{\beta}$
in one and infinite dimensions. In particular, their representations
in terms of known processes as well as possible applications.

\appendix

\section{Appendix}

\subsection{Kolmogorov extension theorem on configuration space}

\label{sec:Kolmogorov-extension-theorem-on-config-space} In this
section, we discuss a version of Kolmogorov extension theorem to the
configuration space $(\Gamma,\mathcal{B}(\Gamma))$. The following
definitions and properties of measurable spaces can be found in \cite{C83},
\cite{G88} and \cite{P67}.
\begin{defn}
Let $(X,\mathcal{A})$ and $(X',\mathcal{A}')$ be two measurable
spaces.
\begin{enumerate}
\item The spaces $(X,\mathcal{A})$ and $(X',\mathcal{A}')$ are called
isomorphic if, and only if, there exists a measurable bijective mapping
$f:X\longrightarrow X'$ such that its inverse $f^{-1}$ is also measurable.
\item $(X,\mathcal{A})$ and $(X',\mathcal{A}')$ are called $\sigma$-isomorphic
if, and only if, there exists a bijective mapping $F:\mathcal{A}\longrightarrow\mathcal{A}'$
between the $\sigma$-algebras which preserves the operations in a
$\sigma$-algebra.
\item $(X,\mathcal{A})$ is said to be countable generated if, and only
if, there exists a denumerable class $\mathscr{D}\subset\mathcal{A}$
such that $\mathscr{D}$ generates $\mathcal{A}$.
\item $(X,\mathcal{A})$ is said to be separable if, and only if, it is
countably generated and for each $x\in X$ the set $\left\{ x\right\} \in\mathcal{A}$.
\end{enumerate}
\end{defn}

\begin{defn}
Let $(X,\mathcal{A})$ be a countable generated measurable space.
Then $(X,\mathcal{A})$ is called the standard Borel space if, and
only if, there exists a Polish space $(X',\mathcal{A}')$ (i.e., a
metrizable, complete metric space which fulfills the second axiom
of countability and the $\sigma$-algebra $\mathcal{A}'$ coincides
with the Borel $\sigma$-sigma) such that $(X,\mathcal{A})$ and $(X',\mathcal{B}(X'))$
are $\sigma$-isomorphic.
\end{defn}

\begin{example}
\label{exa:standard-Borel-spaces}
\begin{enumerate}
\item Every locally compact, $\sigma$-compact space is a standard Borel
space.
\item Polish spaces are standard Borel spaces.
\end{enumerate}
\end{example}

\begin{prop}
\label{prop:X-X'-sigma-isomorphic}
\begin{enumerate}
\item If $(X,\mathcal{A})$ is a countable generated measurable space,
then there exists $E\subset\left\{ 0,1\right\} ^{\mathbb{N}}$ such
that $(X,\mathcal{A})$ is $\sigma$-isomorphic to $(E,\mathcal{B}(E))$.
Thus $(X,\mathcal{A})$ is $\sigma$-isomorphic to a separable measurable
space.
\item Let $(X,\mathcal{A})$ and $(X',\mathcal{A}')$ be separable measurable
spaces. Then $(X,\mathcal{A})$ is $\sigma$-isomorphic to $(X',\mathcal{A}')$
if, and only if, they are isomorphic.
\end{enumerate}
\end{prop}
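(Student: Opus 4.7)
The plan is to treat the two parts separately. For part (1), I would exploit the countable generator directly by embedding $X$ into the Cantor space via indicator coordinates. Let $\mathscr{D}=\{A_{n}\}_{n\in\mathbb{N}}\subset\mathcal{A}$ be a countable family generating $\mathcal{A}$, and define
\[
\phi\colon X\longrightarrow\{0,1\}^{\mathbb{N}},\qquad \phi(x):=\bigl(\mathbbm{1}_{A_{n}}(x)\bigr)_{n\in\mathbb{N}}.
\]
Set $E:=\phi(X)$ and equip $\{0,1\}^{\mathbb{N}}$ with its product $\sigma$-algebra, writing $\mathcal{B}(E)$ for the induced trace $\sigma$-algebra. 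Since the cylinders $C_{n}:=\{y\in E:y_{n}=1\}$ satisfy $\phi^{-1}(C_{n})=A_{n}$ and both families generate their respective $\sigma$-algebras, one obtains $\phi^{-1}(\mathcal{B}(E))=\mathcal{A}$. From this I would deduce that the assignment $F\colon\mathcal{A}\to\mathcal{B}(E)$, $F(A):=\phi(A)$, is well-defined, bijective, and preserves complements and countable unions, hence yielding the desired $\sigma$-isomorphism. Separability of $(E,\mathcal{B}(E))$ is inherited from $\{0,1\}^{\mathbb{N}}$ since singletons are closed under countable intersections of cylinders.

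For part (2), the ``if'' direction is immediate, because any measurable bijection with measurable inverse induces a $\sigma$-isomorphism via $A\mapsto f(A)$. The nontrivial direction requires lifting a $\sigma$-isomorphism $F\colon\mathcal{A}\to\mathcal{A}'$ to a point bijection $f\colon X\to X'$. I would first observe that in a separable measurable space the atoms of $\mathcal{A}$ are precisely the singletons: for a countable generator $\{A_{n}\}$ one has the representation
\[
\{x\}=\bigcap_{x\in A_{n}}A_{n}\;\cap\;\bigcap_{x\notin A_{n}}A_{n}^{c},
\]
which belongs to $\mathcal{A}$ and is a minimal non-empty element under inclusion. Since being an atom is expressible purely through the $\sigma$-algebra operations (non-emptiness together with the property that every measurable subset is either empty or the atom itself), any $\sigma$-isomorphism sends atoms to atoms. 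I would then define $f(x):=y$ where $\{y\}=F(\{x\})$; bijectivity of $f$ follows from the bijectivity of $F$ on atoms, and measurability is verified on a generating family by checking $f^{-1}(A')=F^{-1}(A')$ for $A'\in\mathcal{A}'$, which extends to all of $\mathcal{A}'$ since both $f^{-1}$ and $F^{-1}$ commute with countable $\sigma$-algebra operations.

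The main obstacle is the verification in part (2) that a $\sigma$-isomorphism necessarily carries singletons to singletons, together with the subsequent proof that the point map $f$ so obtained is genuinely measurable rather than merely set-theoretically defined. A clean route is to first apply part (1) to both sides, reducing to the case $X\subset\{0,1\}^{\mathbb{N}}$, $X'\subset\{0,1\}^{\mathbb{N}}$ with their trace Borel $\sigma$-algebras, where distinct points are always separated by a coordinate cylinder; under this reduction the characterization of atoms as singletons and the transfer of the coordinate cylinders under $F$ give an explicit formula for $f$ coordinate by coordinate, making the measurability check immediate. The ``if and only if'' then follows by combining this construction with the trivial converse.
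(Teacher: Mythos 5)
The paper itself gives no proof of this proposition: it is quoted as a standard fact with pointers to the references \cite{C83}, \cite{G88} and \cite{P67}, so there is no in-text argument to compare against. Your proof is essentially the classical one found in those sources and is sound. For part (1) you use the Marczewski-type coding map $\phi(x)=(\mathbf{1}_{A_{n}}(x))_{n}$; the one point you should make explicit is why $F(A):=\phi(A)$ is well defined and respects complements when $\phi$ fails to be injective. This follows because $\mathcal{A}=\phi^{-1}(\mathcal{B}(E))$ together with the surjectivity of $\phi$ onto $E=\phi(X)$ shows that every $A\in\mathcal{A}$ is saturated for the fibres of $\phi$ (it equals $\phi^{-1}(B)$ for a unique $B\in\mathcal{B}(E)$), so that $F$ is precisely the inverse of the $\sigma$-isomorphism $B\mapsto\phi^{-1}(B)$; without saturation, $\phi(A)$ and $\phi(A^{c})$ could intersect. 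For part (2) your atom argument is the standard one: in a separable space the atoms are exactly the singletons, a $\sigma$-isomorphism preserves $\emptyset$ and inclusion and hence sends atoms to atoms, and once $f$ is defined by $F(\{x\})=\{f(x)\}$ the identity $f^{-1}(A')=F^{-1}(A')$ holds for every $A'\in\mathcal{A}'$ directly, since monotonicity of $F$ gives $\{x\}\subset F^{-1}(A')$ if, and only if, $F(\{x\})\subset A'$; no extension from a generating family is actually needed, and the same computation yields $f(A)=F(A)$ for the measurability of $f^{-1}$. The concluding reduction to subsets of $\{0,1\}^{\mathbb{N}}$ via part (1) is a legitimate alternative route but is not necessary once these two points are in place.
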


The following theorem states some operations under which separable
standard Borel spaces are closed, see \cite{P67} and \cite{C83}.
\begin{thm}
\label{thm:separable-standard-Borel-space}
\begin{enumerate}
\item Countable product, sum, and union are separable standard Borel spaces.
\item The projective limit is a separable standard Borel space.
\item Any measurable subset of a separable standard Borel space is also
a separable standard Borel space.
\end{enumerate}
\end{thm}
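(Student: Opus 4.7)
The plan is to reduce each of the three closure properties to a corresponding fact about Polish spaces, using the characterization that $(X,\mathcal{A})$ is a separable standard Borel space iff it is $\sigma$-isomorphic to some $(Y,\mathcal{B}(Y))$ with $Y$ Polish. Proposition \ref{prop:X-X'-sigma-isomorphic} lets me pass freely between $\sigma$-isomorphism and genuine bimeasurable bijection in this separable setting, which simplifies the bookkeeping considerably.

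For part (1), given separable standard Borel spaces $(X_n,\mathcal{A}_n)$, $n\in\mathbb{N}$, I would choose Polish models $(Y_n,\mathcal{B}(Y_n))$ and form the product $Y=\prod_n Y_n$ in the product topology. A routine argument shows $Y$ is Polish: a countable product of complete, second countable metric spaces is itself complete and second countable. Because each $Y_n$ is second countable, the product $\sigma$-algebra $\bigotimes_n \mathcal{B}(Y_n)$ coincides with $\mathcal{B}(Y)$, so transporting the product of the individual $\sigma$-isomorphisms proves the claim for countable products. The countable disjoint sum is handled identically, since the topological coproduct of countably many Polish spaces is again Polish and its Borel $\sigma$-algebra equals the sum $\sigma$-algebra. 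A countable union of measurable subsets of a fixed ambient separable standard Borel space then reduces to part (3) applied summand by summand (or directly to the sum construction composed with the ambient inclusion).

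For part (3), the essential input is the Kuratowski--Lusin theorem: every Borel subset $B$ of a Polish space $Y$ admits a Polish topology, finer than the subspace topology, whose Borel $\sigma$-algebra coincides with the trace $\mathcal{B}(Y)\cap B$. Granting this, a measurable subset of a separable standard Borel space is $\sigma$-isomorphic to $(B,\mathcal{B}(B))$ with $B$ Polish in the refined topology, and is therefore itself a separable standard Borel space; countable generation and measurability of singletons are obviously inherited from the ambient space.

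For part (2), I would realize a sequential projective limit $\varprojlim_n X_n$ with bonding maps $p_{m,n}\colon X_n\to X_m$, $m\le n$, as the subset
\[
\Big\{ (x_n)_n\in\prod_n X_n \;\Big|\; p_{m,n}(x_n)=x_m \text{ for all } m\le n \Big\}
\]
of the countable product. Each defining equation is a measurable constraint, since $p_{m,n}$ is measurable and separability forces the diagonal to be measurable; thus the projective limit is a countable intersection of Borel sets, hence Borel. Combining part (1) with part (3) then yields the claim. For a general directed projective system indexed by $(\mathcal{B}_c(\mathbb{R}^d),\subset)$ as used in the paper, I would first extract a countable cofinal subsystem (via an exhausting sequence of compact sets) and reduce to the sequential case. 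The main obstacle will be the Kuratowski--Lusin step underpinning (3): the cleanest route is a transfinite induction along the Borel hierarchy, starting with closed subsets (which inherit a Polish structure directly), then $G_\delta$ sets (Polish in a refined topology built from countably many distance functions), and closing up under the countable Boolean operations that generate the higher Borel classes.
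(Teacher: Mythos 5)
The paper does not actually prove this theorem: it is quoted from the references \cite{P67} and \cite{C83} (Parthasarathy, Chapter~V, and Cohn, Chapter~8), so there is no internal proof to compare against. Your reconstruction is the standard descriptive-set-theoretic argument and is essentially what those references contain: pass to Polish models via Proposition~\ref{prop:X-X'-sigma-isomorphic}, use that a countable product (resp.\ coproduct) of Polish spaces is Polish and that second countability makes the product $\sigma$-algebra equal to the Borel $\sigma$-algebra; get part (3) from the Kuratowski--Lusin refinement theorem; and realize the sequential projective limit as the measurable subset of the countable product cut out by the compatibility relations, the diagonal being measurable because a countably generated space with measurable singletons is countably separated. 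The reduction of the directed system over $(\mathcal{B}_{c}(\mathbb{R}^{d}),\subset)$ to a cofinal sequence is exactly what the paper's notion of an order generating sequence is for. The only place where your sketch leans on something genuinely nontrivial without supplying it is the closure step in the transfinite induction for part (3): to handle countable unions and intersections along the Borel hierarchy you need the lemma that countably many Polish refinements of a given Polish topology, all with the same Borel sets, admit a common Polish refinement still with the same Borel sets. That lemma is standard but is where the real work of the Kuratowski--Lusin theorem lives; as stated, your induction ``closing up under the countable Boolean operations'' silently assumes it. With that lemma cited or proved, the argument is complete.
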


We need also a version of Kolmogorov's extension theorem for separable
standard Borel spaces.
\begin{thm}[{cf. \cite[Chap. V, Theorem 3.2]{P67}}]
\label{thm:Kolmogorov-extension-thm} Let $(X_{n},\mathcal{A}_{n})$,
$n\in\mathbb{N}$, be separable standard Borel spaces. Let $(X,\mathcal{A})$
be the projective limit of the space $(X_{n},\mathcal{A}_{n})$ relative
to the maps $p_{m,n}:X_{n}\longrightarrow X_{m}$, $m\leq n$. If
$\left\{ \mu_{n}\right\} _{n\in\mathbb{N}}$ is a sequence of probability
measures such that n is a measure on $(X_{n},\mathcal{A}_{n})$ and
$\mu_{m}=\mu_{n}\circ p_{m,n}^{-1}$ for $m\leq n$. Then there exists
a unique measure on $(X,\mathcal{A})$ such that $\mu_{n}=\mu\circ p_{n}^{-1}$
for all $n\in\mathbb{N}$ where $p_{n}$ is the projection map from
$X$ on $X_{n}$.
\end{thm}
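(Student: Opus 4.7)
The plan is to use the standard Carathéodory extension strategy, with the separable standard Borel hypothesis entering precisely at the countable-additivity step. First I would introduce the algebra $\mathcal{C}$ of cylinder sets in $X$, namely sets of the form $p_{n}^{-1}(A_{n})$ with $A_{n}\in\mathcal{A}_{n}$, $n\in\mathbb{N}$. Since $p_{m}=p_{m,n}\circ p_{n}$ for $m\leq n$, a cylinder based on level $m$ can be reindexed as a cylinder based on any higher level $n$, so $\mathcal{C}$ is an algebra and $\sigma(\mathcal{C})=\mathcal{A}$ by definition of the projective limit. On $\mathcal{C}$ I would define the set function
\[
\mu\bigl(p_{n}^{-1}(A_{n})\bigr):=\mu_{n}(A_{n}),\qquad A_{n}\in\mathcal{A}_{n},
\]
and check that the consistency condition $\mu_{m}=\mu_{n}\circ p_{m,n}^{-1}$ makes $\mu$ well defined (independent of the representation of the cylinder) and finitely additive on $\mathcal{C}$.

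The core step is countable additivity of $\mu$ on $\mathcal{C}$, which by standard arguments is equivalent to continuity at $\emptyset$: if $(C_{k})\subset\mathcal{C}$ is decreasing with $\mu(C_{k})\geq\delta>0$ for all $k$, then $\bigcap_{k}C_{k}\neq\emptyset$. Here is where the separable standard Borel assumption is essential. By definition each $(X_{n},\mathcal{A}_{n})$ is $\sigma$-isomorphic to a Polish space with its Borel $\sigma$-algebra, so each $\mu_{n}$ may be transported to a Borel probability measure on a Polish space, which is automatically inner regular by compact sets (Ulam's theorem). I would then approximate each $C_{k}=p_{n_{k}}^{-1}(A_{n_{k}})$ from inside by a cylinder $K_{k}=p_{n_{k}}^{-1}(F_{n_{k}})$ with $F_{n_{k}}$ compact in the Polish realization of $X_{n_{k}}$ and $\mu_{n_{k}}(A_{n_{k}}\setminus F_{n_{k}})<\delta 2^{-k-1}$, so that $\mu\bigl(\bigcap_{j\leq k}K_{j}\bigr)\geq\delta/2>0$ and in particular each finite intersection is nonempty. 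A diagonal/compactness argument in the product of the compact sets $F_{n_{k}}$, together with the projective compatibility $p_{m,n}F_{n}\subset F_{m}$ (after passing to suitable enlargements), produces a thread $(x_{n})_{n\in\mathbb{N}}\in\prod_{n}X_{n}$ with $p_{m,n}(x_{n})=x_{m}$, i.e., a point $x\in X$, lying in every $C_{k}$.

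Once countable additivity on the algebra $\mathcal{C}$ is established, Carathéodory's extension theorem yields a probability measure $\mu$ on $\mathcal{A}=\sigma(\mathcal{C})$ with the required marginals $\mu\circ p_{n}^{-1}=\mu_{n}$. For uniqueness I would invoke the $\pi$--$\lambda$ theorem: the class $\mathcal{C}$ is a $\pi$-system generating $\mathcal{A}$, and two probability measures on $\mathcal{A}$ that agree on $\mathcal{C}$ must coincide, so any other extension of the family $(\mu_{n})$ equals $\mu$.

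The hard part is the tightness/compactness argument underlying countable additivity. All other steps are essentially bookkeeping with cylinder sets and the Dynkin lemma, but the passage from finite additivity to $\sigma$-additivity genuinely requires transporting the $\mu_{n}$'s to Polish realizations via the $\sigma$-isomorphism granted by the standard Borel hypothesis, together with inner regularity by compacts and the fact that projective limits of nonempty compact sets under continuous surjections are nonempty. This is precisely why separable standard Borel (as opposed to an arbitrary measurable space) is the right framework, and it is the only place where Example~\ref{exa:standard-Borel-spaces} and Proposition~\ref{prop:X-X'-sigma-isomorphic} are actually used.
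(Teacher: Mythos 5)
First, a point of comparison: the paper does not prove Theorem~\ref{thm:Kolmogorov-extension-thm} at all --- it is quoted from Parthasarathy \cite[Chap.~V, Theorem 3.2]{P67} and used as a black box --- so there is no internal proof to measure your argument against. Your outline (cylinder algebra, well-definedness from consistency, countable additivity via inner regularity by compacts after transporting through the Borel isomorphisms of Proposition~\ref{prop:X-X'-sigma-isomorphic}, then Carath\'eodory and the $\pi$--$\lambda$ theorem) is the classical route and is the one Parthasarathy himself takes; identifying the tightness step as the only place where the standard Borel hypothesis is used is exactly right.

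There is, however, a genuine gap at the thread-extraction step. The connecting maps $p_{m,n}$ are only Borel measurable, not continuous, in the Polish realizations. Consequently (i) $p_{m,n}(F_{n})$ is merely an analytic set, not compact, so the ``suitable enlargements'' ensuring $p_{m,n}F_{n}\subset F_{m}$ with all $F$'s still compact are not available for free; and (ii) even granted compatible compacts, the diagonal argument needs the subsequential limits $y_{m}=\lim_{l}p_{n_{m}}(x^{(k_{l})})$ to satisfy $p_{m,n}(y_{n})=y_{m}$, which uses continuity of $p_{m,n}$ along the convergent subsequences. As written the step would fail for merely measurable connecting maps. The standard repairs are: shrink the compacts further using Lusin's theorem so that each $p_{m,n}$ restricts to a continuous map on them; or replace topological compactness by Marczewski's compact-class criterion for countable additivity; or realize the projective limit inside the countable product $\prod_{n}X_{n}$, where the connecting data become coordinate projections and one only has to check that the set of compatible threads has full outer measure. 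A smaller point in the same spirit: well-definedness of $\mu$ on cylinders requires comparing two \emph{different} representations $p_{n}^{-1}(A_{n})=p_{m}^{-1}(A_{m})$ of the same set, and the consistency condition alone does not settle this unless one controls the $\mu_{n}$-outer measure of $p_{n}(X)$; this deserves an explicit argument rather than the phrase ``check that the consistency condition makes $\mu$ well defined.''
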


This theorem can be extended to an index set $I$ which is a directed
set with an order generating sequence, i.e., there exists a sequence
$(\alpha_{n})_{n\in\mathbb{N}}$ in $I$ such that for every $\alpha\in I$
there exists $n\in\mathbb{N}$ with $\alpha<\alpha_{n}$. We apply
this general framework to our configuration space $\Gamma$. Assume
that $(X,\mathfrak{X})$ is a separable standard Borel space. To use
$\mathcal{B}_{c}(X)$ makes this generality have no sense, hence we
have to introduce an abstract concept of local sets. Let $\mathcal{J}_{X}$
be a subset of $\mathfrak{X}$ with the properties:
\begin{description}
\item [{(I1)}] $\Lambda_{1}\cup\Lambda_{2}\in\mathcal{J}_{X}$ for all
$\Lambda_{1},\Lambda_{2}\in\mathcal{J}_{X}.$
\item [{(I2)}] If $\Lambda\in\mathcal{J}_{X}$ and $A\in\mathfrak{X}$
with $A\subset\Lambda$ then $A\in\mathcal{J}_{X}$.
\item [{(I3)}] There exists a sequence $\left\{ \Lambda_{n}\mid n\in\mathbb{N}\right\} $
from $\mathcal{J}_{X}$ with $X=\bigcup_{n\in\mathbb{N}}\Lambda_{n}$
such that if $\Lambda\in\mathcal{J}_{X}$ then $\Lambda\subset\Lambda_{n}$
for some $n\in\mathbb{N}$.
\end{description}
We can then construct the configuration space as in Subsection~\ref{subsec:configuration-space}
taking $X=\mathbb{R}^{d}$ and replacing $\mathcal{B}(\mathbb{R}^{d})$
by $\mathcal{J}_{\mathbb{R}^{d}}$. Our aim is to show that $(\Gamma,\mathcal{B}(\Gamma))$
is a separable standard Borel space and thus by Theorem~\ref{thm:Kolmogorov-extension-thm}
the measure $\pi_{\sigma}^{\beta}$ in Subsection~\ref{sec:fPm-cspace}
exists.

It follows from Theorem~\ref{thm:separable-standard-Borel-space}
that for any $\Lambda\in\mathcal{J}_{\mathbb{R}^{d}}$ and for any
$n\in\mathbb{N}$, the set $\Lambda^{n}$ is a separable standard
Borel space. Thus, by the same argument $\widetilde{\Lambda^{n}}/S_{n}$
is also a separable standard Borel space, see e.g. \cite{S94}. Now
taking into account the isomorphism between $\widetilde{\Lambda^{n}}/S_{n}$
and $\Gamma_{\Lambda}^{(n)}$, $\Gamma_{\Lambda}^{(n)}$ is also a
separable standard Borel space as well as $\Gamma_{\Lambda}$ by Theorem~\ref{thm:separable-standard-Borel-space}-(1).
Therefore, given $(\Gamma,\mathcal{B}(\Gamma))$ as the projective
limit of the projective system $\left\{ (\Gamma_{\Lambda},\mathcal{B}(\Gamma_{\Lambda})),p_{\Lambda_{1},\Lambda_{2}},\mathcal{J}_{\mathbb{R}^{d}}\right\} $
of separable standard Borel spaces, by Theorem~\ref{thm:separable-standard-Borel-space}-(2),
$(\Gamma,\mathcal{B}(\Gamma))$ is a separable standard Borel space.

\subsection{Stirling Operators}

\label{sec:Stirling-Operators}In this appendix we discuss the Stirling
operators which we use in Section~\ref{sec:Appell-System} related
to the Taylor expansion of a holomorphic function typical in Poisson
analysis. For more details and other applications, see \cite{Finkelshtein2019b,Finkelshtein2022}.

For $n\in\mathbb{N}$ and $k\in\mathbb{N}_{0}$, we define the \emph{falling
factorial} by 
\[
(k)_{n}:=n!\binom{k}{n}=k(k-1)\cdots(k-n+1).
\]
The latter expression allows us to define falling factorials as polynomials
of a variable $z\in\mathbb{C}$ replacing $k$ as 
\[
(z)_{n}:=z(z-1)\cdots(z-n+1).
\]
The generating function of the falling factorials is 
\[
\sum_{n=0}^{\infty}\frac{u^{n}}{n!}(z)_{n}=\exp[z\log(1+u)].
\]
The \emph{Stirling numbers of the first kind,} denoted by $s(n,k)$,
are defined as the coefficients of the expansion $(z)_{n}$ in $z$,
in explicit,
\[
(z)_{n}:=\sum_{k=1}^{n}s(n,k)z^{k},
\]
while the \emph{Stirling numbers of the second kind, }denoted by $S(n,k)$,
are defined as the coefficients of the expansion $z^{n}$ in $(z)_{k}$,
that is,
\[
z^{n}=\sum_{k=1}^{n}S(n,k)(z)_{k}.
\]

Let us consider lifting the polynomials $(z)_{n}$ to $\mathcal{D}_{\mathbb{C}}'$.
We call these polynomials the falling factorials on $\mathcal{D}_{\mathbb{C}}'$,
denoted by $(w)_{n}$, for $w\in\mathcal{D}_{\mathbb{C}}'$ (see \cite{Finkelshtein2019b}).
The generating function of the falling factorials on $\mathcal{D}_{\mathbb{C}}'$
is given by 
\begin{equation}
\exp(\langle w,\log(1+\varphi)\rangle)=\sum_{n=0}^{\infty}\frac{1}{n!}\langle(w)_{n},\varphi^{\otimes n}\rangle,\quad\varphi\in\mathcal{D}_{\mathbb{C}},\;w\in\mathcal{D}_{\mathbb{C}}'.\label{eq:generating-function-f-factorial-inf-dim}
\end{equation}
The falling factorial may be written recursively (see Proposition
5.4 in \cite{Finkelshtein2019b}) as follows
\begin{align*}
 & (w)_{0}=1,\\
 & (w)_{1}=w,\\
 & (w)_{n}(x_{1},\dots,x_{n})=w(x_{1})(w(x_{2})-\delta_{x_{1}}(x_{2}))\\
 & \qquad\qquad\times\dots\times(w(x_{n})-\delta_{x_{1}}(x_{n})-\delta_{x_{2}}(x_{n})-\dots-\delta_{x_{n-1}}(x_{n})),
\end{align*}
for $n\geq2$ and $(x_{1},\dots,x_{n})\in(\mathbb{R}^{d})^{n}$.

Now we define the \emph{Stirling operators of the first kind} as
the linear operators $\mathbf{s}(n,k):\mathcal{D}_{\mathbb{C}}^{\hat{\otimes}n}\longrightarrow\mathcal{D}_{\mathbb{C}}^{\hat{\otimes}k}$,
$n\geq k$, satisfying 
\begin{align}
\langle(w)_{n},\varphi^{(n)}\rangle & =\sum_{k=1}^{n}\langle w^{\otimes k},\mathbf{s}(n,k)\varphi^{(n)}\rangle,\qquad\varphi^{(n)}\in\mathcal{D}_{\mathbb{C}}^{\hat{\otimes}n},\;w\in\mathcal{D}_{\mathbb{C}}',\label{eq:Stirling-1st-inner-product}
\end{align}
and the \emph{Stirling operators of the second kind} as the linear
operators $\mathbf{S}(n,k):\mathcal{D}_{\mathbb{C}}^{\hat{\otimes}n}\longrightarrow\mathcal{D}_{\mathbb{C}}^{\hat{\otimes}k}$,
$n\geq k$, satisfying
\begin{align}
\langle w^{\otimes n},\varphi^{(n)}\rangle & =\sum_{k=1}^{n}\langle(w)_{k},\mathbf{S}(n,k)\varphi^{(n)}\rangle,\qquad\varphi^{(n)}\in\mathcal{D}_{\mathbb{C}}^{\hat{\otimes}n},\;w\in\mathcal{D}_{\mathbb{C}}'.\label{eq:Stirling-2nd-inner-product}
\end{align}

\begin{rem}
The Stirling operators $\mathbf{s}(n,k)$ and $\mathbf{S}(n,k)$
introduced in \cite{Finkelshtein2022} are defined on the space of
measurable, bounded, compactly supported, symmetric functions, however;
in this paper, we define these operators on the space $\mathcal{D}_{\mathbb{C}}^{\hat{\otimes}n}$
as a consequence of extending the falling factorials to the space
of generalized functions $\mathcal{D}_{\mathbb{C}}'$ rather than
using the space of Radon measures.
\end{rem}

Let $n,k\in\mathbb{N}$, $k\leq n$ and $i_{1},\dots,i_{k}\in\mathbb{N}$
such that $i_{1}+\dots+i_{k}=n$. We define the operator $\mathbb{D}_{i_{1},\dots,i_{k}}^{(n)}\in\mathcal{L}(\mathcal{D}_{\mathbb{C}}^{\hat{\otimes}n},\mathcal{D}_{\mathbb{C}}^{\hat{\otimes}k})$
(the space of linear operators from $\mathcal{D}_{\mathbb{C}}^{\hat{\otimes}n}$
into $\mathcal{D}_{\mathbb{C}}^{\hat{\otimes}k}$) by 
\begin{equation}
(\mathbb{D}_{i_{1},\dots,i_{k}}^{(n)}\varphi^{(n)})(x_{1},\dots,x_{k}):=\frac{1}{k!}\sum_{\iota\in S_{k}}\varphi^{(n)}(\underbrace{x_{\iota(1)},\dots,x_{\iota(1)}}_{i_{1}},\dots,\underbrace{x_{\iota(k)},\dots,x_{\iota(k)}}_{i_{k}}),\label{eq:derivative-of-order-n}
\end{equation}
for $\varphi^{(n)}\in\mathcal{D}_{\mathbb{C}}^{\hat{\otimes}n}$ and
$(x_{1},\dots,x_{k})\in(\mathbb{R}^{d})^{k}$. In particular, we have
\begin{equation}
\mathbb{D}_{i_{1},\dots,i_{k}}^{(n)}\varphi^{\otimes n}=\varphi^{i_{1}}\hat{\otimes}\dots\hat{\otimes}\varphi^{i_{k}},\quad\varphi\in\mathcal{D}_{\mathbb{C}}.\label{eq:derivative-tensor-product}
\end{equation}
When $k=1$, we denote $\mathbb{D}_{n}^{(n)}:=\mathbb{D}^{(n)}$ such
that for $\varphi^{(n)}\in\mathcal{D}_{\mathbb{C}}^{\hat{\otimes}n}$,
\[
(\mathbb{D}^{(n)}\varphi^{(n)})(x)=\varphi^{(n)}(x,\dots,x),\quad x\in\mathbb{R}^{d}.
\]
The operator $\mathbb{D}_{i_{1},\dots,i_{k}}^{(n)}$ is continuous
(see \cite{Finkelshtein2019b} and \cite{Finkelshtein2022}), that
is, its adjoint $(\mathbb{D}_{i_{1},\dots,i_{k}}^{(n)})^{*}:(\mathcal{D}_{\mathbb{C}}^{\hat{\otimes}k})'\longrightarrow(\mathcal{D}_{\mathbb{C}}^{\hat{\otimes}n})'$
exists and is well-defined. In fact, the operators $\mathbf{s}(n,k)$
and $\mathbf{S}(n,k)$ can be written explicitly in terms of the operator
$\mathbb{D}_{i_{1},\dots,i_{k}}^{(n)}$, (see Proposition 3.7 in \cite{Finkelshtein2022})
that is, for any $n,k\in\mathbb{N}$, $k\leq n$,
\begin{equation}
\mathbf{s}(n,k)=\frac{n!}{k!}\sum_{i_{1}+\dots+i_{k}=n}\frac{(-1)^{n-k}}{i_{1}\dots i_{k}}\mathbb{D}_{i_{1},\dots,i_{k}}^{(n)}.\label{eq:Stirling-first-D-property}
\end{equation}
and 
\begin{equation}
\mathbf{S}(n,k)=\frac{n!}{k!}\sum_{i_{1}+\dots+i_{k}=n}\frac{1}{i_{1}!\dots i_{k}!}\mathbb{D}_{i_{1},\dots,i_{k}}^{(n)}.\label{eq:Stirling-second-D-property}
\end{equation}
Hence, the Stirling operators are continuous (see Proposition 3.7
in \cite{Finkelshtein2022}) and so their adjoints $\mathbf{s}(n,k)^{*}$
and $\mathbf{S}(n,k)^{*}$ are well defined, that is, 
\begin{equation}
\mathbf{s}(n,k)^{*}:(\mathcal{D}_{\mathbb{C}}^{\hat{\otimes}k})'\longrightarrow(\mathcal{D}_{\mathbb{C}}^{\hat{\otimes}n})'\quad\mathrm{and\quad}\mathbf{S}(n,k)^{*}:(\mathcal{D}_{\mathbb{C}}^{\hat{\otimes}k})'\longrightarrow(\mathcal{D}_{\mathbb{C}}^{\hat{\otimes}n})'\label{eq:adjoint-Stirling-operator}
\end{equation}
and satisfy 
\[
\langle w^{(k)},\mathbf{s}(n,k)\varphi^{(n)}\rangle=\langle\mathbf{s}(n,k)^{*}w^{(k)},\varphi^{(n)}\rangle\quad\mathrm{and\quad}\langle w^{(k)},\mathbf{S}(n,k)\varphi^{(n)}\rangle=\langle\mathbf{S}(n,k)^{*}w^{(k)},\varphi^{(n)}\rangle,
\]
for all $w^{(k)}\in(\mathcal{D}_{\mathbb{C}}^{\hat{\otimes}k})'$
and $\varphi^{(n)}\in\mathcal{D}_{\mathbb{C}}^{\hat{\otimes}n}$.
Hence, the Equations \eqref{eq:Stirling-1st-inner-product} and \eqref{eq:Stirling-2nd-inner-product}
imply that 
\begin{align}
(w)_{n} & =\sum_{k=1}^{n}\mathbf{s}(n,k)^{*}w^{\otimes k},\label{eq:falling-factorial-Stirling}\\
w^{\otimes n} & =\sum_{k=1}^{n}\mathbf{S}(n,k)^{*}(w)_{k}.\nonumber 
\end{align}

\begin{prop}[{see \cite[Prop.~3.15]{Finkelshtein2022}}]
 For each $k\in\mathbb{N}$ and $\xi\in\mathcal{D}_{\mathbb{C}},$
\begin{equation}
\sum_{n=k}^{\infty}\frac{1}{n!}\mathbf{S}(n,k)\xi^{\otimes n}=\frac{1}{k!}(\mathrm{e}^{\xi}-1)^{\otimes k}\label{eq:decomposition-g-alpha^k}
\end{equation}
and 
\begin{equation}
\sum_{n=k}^{\infty}\frac{1}{n!}\mathbf{s}(n,k)\xi^{\otimes n}=\frac{1}{k!}(\log(1+\xi))^{\otimes k}.\label{eq:decomposition-alpha^k}
\end{equation}
\end{prop}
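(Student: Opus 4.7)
The plan is to prove both identities by direct substitution of the explicit formulas \eqref{eq:Stirling-first-D-property} and \eqref{eq:Stirling-second-D-property} for the Stirling operators, followed by recognizing the resulting double sums as $k$-fold symmetric tensor powers of a single scalar series. The key computational ingredient is \eqref{eq:derivative-tensor-product}, which says that $\mathbb{D}^{(n)}_{i_{1},\dots,i_{k}}\xi^{\otimes n}=\xi^{i_{1}}\hat{\otimes}\cdots\hat{\otimes}\xi^{i_{k}}$ whenever $i_{1}+\dots+i_{k}=n$, so that each term in the combinatorial expansion of $\mathbf{S}(n,k)\xi^{\otimes n}$ (or $\mathbf{s}(n,k)\xi^{\otimes n}$) factors cleanly across the $k$ slots.

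For the first identity, after substituting \eqref{eq:Stirling-second-D-property} and applying \eqref{eq:derivative-tensor-product}, I would obtain
\[
\sum_{n=k}^{\infty}\frac{1}{n!}\mathbf{S}(n,k)\xi^{\otimes n}=\frac{1}{k!}\sum_{n=k}^{\infty}\sum_{\substack{i_{1}+\dots+i_{k}=n\\ i_{j}\geq1}}\frac{\xi^{i_{1}}}{i_{1}!}\hat{\otimes}\cdots\hat{\otimes}\frac{\xi^{i_{k}}}{i_{k}!}.
\]
Interchanging the order of summation (summing over all $k$-tuples $(i_{1},\dots,i_{k})\in\mathbb{N}^{k}$ and reading off $n=\sum i_{j}$) factors the double sum as the $k$-th symmetric tensor power of $\sum_{i=1}^{\infty}\xi^{i}/i!=\mathrm{e}^{\xi}-1$, which gives \eqref{eq:decomposition-g-alpha^k}.

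For the second identity the same substitution, now using \eqref{eq:Stirling-first-D-property}, yields the same structure but with weights $(-1)^{n-k}/(i_{1}\cdots i_{k})$. The crucial observation is that since $n=\sum_{j=1}^{k}i_{j}$ we have $n-k=\sum_{j=1}^{k}(i_{j}-1)$, hence
\[
(-1)^{n-k}=\prod_{j=1}^{k}(-1)^{i_{j}-1},
\]
which again makes the summand split over the $k$ slots. Interchanging summations factors the double sum as the $k$-th symmetric tensor power of the series $\sum_{i=1}^{\infty}(-1)^{i-1}\xi^{i}/i=\log(1+\xi)$, giving \eqref{eq:decomposition-alpha^k}.

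The main (and essentially only) obstacle is justifying the interchange of summations in the appropriate topology of $\mathcal{D}_{\mathbb{C}}^{\hat{\otimes}k}$. For \eqref{eq:decomposition-g-alpha^k} this is immediate because the exponential series converges absolutely in $\mathcal{D}_{\mathbb{C}}$ for every $\xi$ (compact support, smoothness, all seminorms controlled by $|\xi|_{\infty}$). For \eqref{eq:decomposition-alpha^k} convergence of $\log(1+\xi)$ requires $\xi$ to lie in the neighborhood of zero in $\mathcal{D}_{\mathbb{C}}$ where $\|\xi\|_{\infty}<1$; inside this neighborhood the double series converges absolutely in every Hilbert seminorm of $\mathcal{D}_{\mathbb{C}}^{\hat{\otimes}k}$, and the interchange is legitimate by Fubini for counting measure. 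Since both sides depend holomorphically on $\xi$ where defined, the identity then extends by analytic continuation wherever $\log(1+\xi)$ makes sense, matching the statement in \cite{Finkelshtein2022}.
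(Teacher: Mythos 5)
Your proposal is correct. The paper itself offers no proof of this proposition --- it is quoted from \cite[Prop.~3.15]{Finkelshtein2022} --- but your derivation is the natural one: substituting the explicit expansions \eqref{eq:Stirling-second-D-property} and \eqref{eq:Stirling-first-D-property}, using \eqref{eq:derivative-tensor-product} to factor each term across the $k$ slots, and resumming over all tuples $(i_{1},\dots,i_{k})\in\mathbb{N}^{k}$ to recognize the $k$-th tensor power of $\sum_{i\geq1}\xi^{i}/i!=\mathrm{e}^{\xi}-1$, respectively $\sum_{i\geq1}(-1)^{i-1}\xi^{i}/i=\log(1+\xi)$; the sign-splitting observation $(-1)^{n-k}=\prod_{j}(-1)^{i_{j}-1}$ is exactly the point that makes the second identity work, and your remarks on convergence (unrestricted $\xi$ for the exponential, a neighborhood of zero for the logarithm, matching the domain $\mathcal{U}_{\alpha}$ on which the paper actually uses \eqref{eq:decomposition-alpha^k}) are appropriate.
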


\begin{prop}[{see \cite[Prop. 3.19]{Finkelshtein2022}}]
\label{prop:Stirling-1st-2nd-kind-inf-dim}For any $i,n\in\mathbb{N}$,

\[
\sum_{k=1}^{n}\mathbf{s}(k,i)\mathbf{S}(n,k)=\sum_{k=1}^{n}\mathbf{S}(k,i)\mathbf{s}(n,k)=\delta_{n,i}\mathbf{1}^{(i)},
\]
where $\mathbf{1}^{(i)}$ denotes the identity operator on $\mathcal{D}_{\mathbb{C}}^{\otimes i}$.
\end{prop}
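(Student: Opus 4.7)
The approach is to compose the two fundamental relations defining the Stirling operators in infinite dimensions and exploit the uniqueness of polynomial expansions in $w\in\mathcal{D}_{\mathbb{C}}'$. Equations \eqref{eq:Stirling-1st-inner-product}--\eqref{eq:Stirling-2nd-inner-product}, rewritten in their dual form \eqref{eq:falling-factorial-Stirling}, express $(w)_n$ and $w^{\otimes n}$ as finite linear combinations of each other, so that $\mathbf{s}$ and $\mathbf{S}$ act as mutually inverse triangular change-of-basis maps between the monomial system $\{w^{\otimes n}\}$ and the falling-factorial system $\{(w)_n\}$ on $\mathcal{D}_{\mathbb{C}}'$. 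The proposition is precisely the statement that these two changes of basis compose to the identity on each tensor level, in either order.

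Concretely, substituting $(w)_k=\sum_{j=1}^{k}\mathbf{s}(k,j)^{*}w^{\otimes j}$ into $w^{\otimes n}=\sum_{k=1}^{n}\mathbf{S}(n,k)^{*}(w)_k$ and interchanging the finite order of summation yields
\[
w^{\otimes n}=\sum_{j=1}^{n}\Bigl(\sum_{k=j}^{n}\mathbf{s}(k,j)\,\mathbf{S}(n,k)\Bigr)^{*}w^{\otimes j},\qquad w\in\mathcal{D}_{\mathbb{C}}'.
\]
Pairing with an arbitrary $\varphi^{(n)}\in\mathcal{D}_{\mathbb{C}}^{\hat{\otimes}n}$ and shifting the adjoint gives the scalar polynomial identity
\[
\langle w^{\otimes n},\varphi^{(n)}\rangle=\sum_{j=1}^{n}\Bigl\langle w^{\otimes j},\Bigl(\sum_{k=j}^{n}\mathbf{s}(k,j)\,\mathbf{S}(n,k)\Bigr)\varphi^{(n)}\Bigr\rangle
\]
for every $w\in\mathcal{D}_{\mathbb{C}}'$. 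The left-hand side is homogeneous of degree $n$ in $w$, while each summand on the right is homogeneous of degree $j$. Replacing $w$ by $tw$ and matching powers of $t$ forces the $j<n$ coefficients to vanish and the $j=n$ coefficient to equal $\varphi^{(n)}$; the diagonal case reduces to $\mathbf{s}(n,n)\mathbf{S}(n,n)=\mathbf{1}^{(n)}$, which is immediate from \eqref{eq:Stirling-first-D-property}--\eqref{eq:Stirling-second-D-property} (only the single term $i_{1}=\dots=i_{n}=1$ survives, and the resulting $\mathbb{D}_{1,\dots,1}^{(n)}$ is the identity on symmetric tensors). Standard symmetric polarization upgrades the scalar identity $\langle w^{\otimes j},\psi^{(j)}\rangle=0$ for every $w\in\mathcal{D}_{\mathbb{C}}'$ to $\psi^{(j)}=0$ in $\mathcal{D}_{\mathbb{C}}^{\hat{\otimes}j}$; since $\varphi^{(n)}$ is arbitrary, this yields the first operator identity $\sum_{k=i}^{n}\mathbf{s}(k,i)\mathbf{S}(n,k)=\delta_{n,i}\mathbf{1}^{(i)}$.

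The reverse identity $\sum_{k=i}^{n}\mathbf{S}(k,i)\mathbf{s}(n,k)=\delta_{n,i}\mathbf{1}^{(i)}$ is obtained by the mirror computation: substitute $w^{\otimes k}=\sum_{j=1}^{k}\mathbf{S}(k,j)^{*}(w)_{j}$ into $(w)_{n}=\sum_{k=1}^{n}\mathbf{s}(n,k)^{*}w^{\otimes k}$, pair with $\varphi^{(n)}$, and repeat the degree-matching argument. Here one uses that $\{(w)_j\}_j$ is again degree-triangular with unit leading coefficient in the monomial basis (since $\mathbf{s}(j,j)=\mathbf{1}^{(j)}$), so $(w)_j$ is a genuine degree-$j$ polynomial in $w$ and the analogous scaling/polarization argument applies. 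The only real obstacle is this uniqueness step: one must invoke (i) the fact that homogeneous polynomials $w\mapsto\langle w^{\otimes j},\psi^{(j)}\rangle$ of distinct degrees $j$ are linearly independent on $\mathcal{D}_{\mathbb{C}}'$ (immediate by scaling $w\mapsto tw$), and (ii) the polarization identity recovering $\psi^{(j)}\in\mathcal{D}_{\mathbb{C}}^{\hat{\otimes}j}$ from the scalar values $\langle w^{\otimes j},\psi^{(j)}\rangle$. Both are classical, but essential, since the statement is claimed as an identity of continuous operators on the nuclear space $\mathcal{D}_{\mathbb{C}}^{\hat{\otimes}n}$ and not merely as a scalar equality.
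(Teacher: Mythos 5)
Your argument is correct, but there is nothing in the paper to compare it against: the authors do not prove this proposition, they import it verbatim from \cite[Prop.~3.19]{Finkelshtein2022}, so any proof you give is necessarily ``a different route.'' What you have written is a sound, self-contained derivation from the defining dual relations \eqref{eq:Stirling-1st-inner-product}--\eqref{eq:Stirling-2nd-inner-product} alone: compose the two expansions $(w)_{n}=\sum_{k}\mathbf{s}(n,k)^{*}w^{\otimes k}$ and $w^{\otimes n}=\sum_{k}\mathbf{S}(n,k)^{*}(w)_{k}$, separate degrees via the scaling $w\mapsto tw$, and pass from the resulting scalar identity to the operator identity by polarization together with the fact that the vectors $w^{\otimes j}$, $w\in\mathcal{D}'_{\mathbb{C}}$, separate $\mathcal{D}_{\mathbb{C}}^{\hat{\otimes}j}$. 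You correctly isolate the two points that actually need justification (linear independence of distinct homogeneous degrees, and recovery of $\psi^{(j)}$ from the values $\langle w^{\otimes j},\psi^{(j)}\rangle$), and your observation that $\mathbf{s}(n,n)=\mathbf{S}(n,n)=\mathbf{1}^{(n)}$ follows from the single surviving partition $i_{1}=\dots=i_{n}=1$ in \eqref{eq:Stirling-first-D-property}--\eqref{eq:Stirling-second-D-property} is right. Two remarks. First, the mirror computation for $\sum_{k}\mathbf{S}(k,i)\mathbf{s}(n,k)$ can be skipped: once the first identity shows that the unitriangular operator matrix $\{\mathbf{s}(n,k)\}$ has $\{\mathbf{S}(n,k)\}$ as a one-sided inverse, triangularity with identity diagonal forces the inverse to be two-sided. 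Second, what your basis-free argument buys over a proof via the explicit representations \eqref{eq:Stirling-first-D-property}--\eqref{eq:Stirling-second-D-property} and classical Stirling-number orthogonality is generality: it applies to any pair of operator families satisfying \eqref{eq:Stirling-1st-inner-product} and \eqref{eq:Stirling-2nd-inner-product}, with no combinatorics required.
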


\subsection{An Alternative Proof of the Theorem \ref{thm:biorthogonal-property-inf-dim}
(Biorthogonal Property)}

\label{sec:alternative-proof-biorthogonal-property}In Section \ref{sec:Generalized-Dual-Appell-System},
we proved the biorthogonal property of $\mathbb{A}^{\sigma,\beta,\alpha}$
using the definitions of $\mathbb{P}^{\sigma,\beta,\alpha}$ and $\mathbb{Q}^{\sigma,\beta,\alpha}$-systems.
Here, we use a property of the generalized function $Q_{n}^{\sigma,\beta,\alpha}(\Phi^{(n)})$
using the $S_{\pi_{\sigma}^{\beta}}$-transform (see Theorem \ref{thm:generalized-function-S-transform}
below) to provide an alternative proof of the Theorem \ref{thm:biorthogonal-property-inf-dim}.
\begin{lem}
\label{lem:operator-G-on-exp-z-phi}For every $\Phi^{(n)}\in(\mathcal{D}_{\mathbb{C}}^{\hat{\otimes}n})'$,
$z\in\mathcal{D}'_{\mathbb{C}}$ and $\varphi\in\mathcal{D}_{\mathbb{C}}$,
we have 
\[
G(\Phi^{(n)})(\exp\langle z,\varphi\rangle)=\langle\Phi^{(n)},g_{\alpha}(\varphi)^{\otimes n}\rangle\exp\langle z,\varphi\rangle.
\]
In other words, the function $\exp\langle z,\varphi\rangle$ is an
eigenfunction of the generalized function $G(\Phi^{(n)})$.
\end{lem}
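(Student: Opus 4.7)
The plan is to unfold both sides via their defining power series and match them termwise, relying on the action of the basic differential operator $D(\cdot)$ on the exponential monomials $\langle z^{\otimes m},\varphi^{\otimes m}\rangle$. First I would expand $\exp\langle z,\varphi\rangle=\sum_{m=0}^{\infty}\frac{1}{m!}\langle z^{\otimes m},\varphi^{\otimes m}\rangle$ and apply $D(\Psi)$, for a generic $\Psi\in(\mathcal{D}_{\mathbb{C}}^{\hat{\otimes}k})'$, termwise. Using the definition of $D$ recalled before Proposition~\ref{prop:operator-D-on-P}, each monomial of order $m\ge k$ produces
\[
D(\Psi)\langle z^{\otimes m},\varphi^{\otimes m}\rangle=\frac{m!}{(m-k)!}\langle z^{\otimes(m-k)}\hat{\otimes}\Psi,\varphi^{\otimes m}\rangle,
\]
while terms with $m<k$ vanish. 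Factoring the tensor product as $\langle z^{\otimes(m-k)},\varphi^{\otimes(m-k)}\rangle\,\langle\Psi,\varphi^{\otimes k}\rangle$ and summing in $m$ collapses the remaining series back into $\exp\langle z,\varphi\rangle$, yielding the auxiliary identity
\[
D(\Psi)(\exp\langle z,\varphi\rangle)=\langle\Psi,\varphi^{\otimes k}\rangle\exp\langle z,\varphi\rangle.
\]

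Next I would substitute $\Psi=\mathbf{S}(k,n)^{*}\Phi^{(n)}$ and feed this into the defining series of $G(\Phi^{(n)})$, obtaining
\[
G(\Phi^{(n)})(\exp\langle z,\varphi\rangle)=\exp\langle z,\varphi\rangle\sum_{k=n}^{\infty}\frac{n!}{k!}\langle\mathbf{S}(k,n)^{*}\Phi^{(n)},\varphi^{\otimes k}\rangle.
\]
Transferring $\mathbf{S}(k,n)^{*}$ back to $\mathbf{S}(k,n)$ by adjunction and invoking Equation~\eqref{eq:Phi-g-alpha-n-Stirling}, the parenthesized sum equals precisely $\langle\Phi^{(n)},g_{\alpha}(\varphi)^{\otimes n}\rangle$, which finishes the proof.

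The routine aspects are purely combinatorial, but the main (mild) obstacle will be justifying the termwise application of $D(\mathbf{S}(k,n)^{*}\Phi^{(n)})$ on the infinite exponential series and the exchange of the two sums in $k$ and $m$. I would handle this by restricting $\varphi$ to a neighborhood of zero in $\mathcal{D}_{\mathbb{C}}$ where $g_{\alpha}(\varphi)$ is holomorphic, using the continuity of each $D(\mathbf{S}(k,n)^{*}\Phi^{(n)})$ from $\mathcal{P}(\mathcal{D}')$ to $\mathcal{P}(\mathcal{D}')$ noted after Proposition~\ref{prop:operator-D-on-P}, together with the norm estimates implicit in \eqref{eq:decomposition-g-alpha^k} on the Stirling operators, so that the double series is absolutely convergent and Fubini applies. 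Once this is in place, the proof reduces to the algebraic manipulations described above.
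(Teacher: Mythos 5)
Your proposal is correct and follows essentially the same route as the paper's proof: compute the action of $D(\mathbf{S}(k,n)^{*}\Phi^{(n)})$ on the monomials $\langle z,\varphi\rangle^{m}$, apply it termwise to the exponential series to see that $\exp\langle z,\varphi\rangle$ is an eigenfunction of each $D(\mathbf{S}(k,n)^{*}\Phi^{(n)})$ with eigenvalue $\langle\mathbf{S}(k,n)^{*}\Phi^{(n)},\varphi^{\otimes k}\rangle$, then sum over $k$ and invoke \eqref{eq:Phi-g-alpha-n-Stirling}. Your extra attention to the interchange of the sums in $k$ and $m$ is a point the paper passes over silently, but it does not alter the argument.
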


\begin{proof}
It follows from \eqref{eq:adjoint-Stirling-operator} that $\mathbf{S}(k,n)^{*}\Phi^{(n)}\in(\mathcal{D}_{\mathbb{C}}^{\hat{\otimes}k})'$.
We use the definition of the differential operator $D(\mathbf{S}(k,n)^{*}\Phi^{(n)})$
to the monomial $\langle z,\varphi\rangle^{m}$, $m\ge k$, to obtain
\begin{align*}
D(\mathbf{S}(k,n)^{*}\Phi^{(n)})\langle z,\varphi\rangle^{m} & =D(\mathbf{S}(k,n)^{*}\Phi^{(n)})\langle z^{\otimes m},\varphi^{\otimes m}\rangle\\
 & =\frac{m!}{(m-k)!}\langle z^{\otimes(m-k)}\hat{\otimes}\mathbf{S}(k,n)^{*}\Phi^{(n)},\varphi^{\otimes m}\rangle\\
 & =\frac{m!}{(m-k)!}\langle z,\varphi\rangle^{m-k}\langle\mathbf{S}(k,n)^{*}\Phi^{(n)},\varphi^{\otimes k}\rangle.
\end{align*}
Now, we apply the above result ($\star$) to the Taylor series of
the function $\exp\langle z,\varphi\rangle$ and obtain 
\begin{align*}
D(\mathbf{S}(k,n)^{*}\Phi^{(n)})(\exp\langle z,\varphi\rangle) & =D(\mathbf{S}(k,n)^{*}\Phi^{(n)})\sum_{m=0}^{\infty}\frac{\langle z,\varphi\rangle^{m}}{m!}\\
 & \overset{\star}{=}\langle\mathbf{S}(k,n)^{*}\Phi^{(n)},\varphi^{\otimes k}\rangle\sum_{m=k}^{\infty}\frac{1}{(m-k)!}\langle z,\varphi\rangle^{m-k}\\
 & =\langle\mathbf{S}(k,n)^{*}\Phi^{(n)},\varphi^{\otimes n}\rangle\exp\langle z,\varphi\rangle.
\end{align*}
Thus, applying the operator $G(\Phi^{(n)})$ to $\exp\langle z,\varphi\rangle$,
we obtain
\begin{align*}
G(\Phi^{(n)})(\exp\langle z,\varphi\rangle) & =\sum_{k=n}^{\infty}\frac{n!}{k!}D(\mathbf{S}(k,n)^{*}\Phi^{(n)})(\exp\langle z,\varphi\rangle)\\
 & =\sum_{k=n}^{\infty}\frac{n!}{k!}\langle\mathbf{S}(k,n)^{*}\Phi^{(n)},\varphi^{\otimes k}\rangle\exp\langle z,\varphi\rangle\\
 & =\langle\Phi^{(n)},g_{\alpha}(\varphi)^{\otimes n}\rangle\exp\langle z,\varphi\rangle,
\end{align*}
where the last equality is a consequence of Equation \eqref{eq:Phi-g-alpha-n-Stirling}.
\end{proof}
\begin{thm}
\label{thm:generalized-function-S-transform}For $\Phi^{(n)}\in(\mathcal{D}_{\mathbb{C}}^{\hat{\otimes}n})'$,
the generalized function $Q_{n}^{\sigma,\beta,\alpha}(\Phi^{(n)})$
satisfies 
\[
S_{\text{\ensuremath{\pi_{\sigma}^{\beta}}}}(Q_{n}^{\sigma,\beta,\alpha}(\Phi^{(n)}))(\varphi)=\langle\Phi^{(n)},g_{\alpha}(\varphi)^{\otimes n}\rangle,\quad\varphi\in\mathcal{V}_{\alpha}\subset\mathcal{D}_{\mathbb{C}}.
\]
\end{thm}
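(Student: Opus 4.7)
The plan is to unfold both sides using the definitions of the $S_{\pi_{\sigma}^{\beta}}$-transform and of $Q_{n}^{\sigma,\beta,\alpha}(\Phi^{(n)})$, and then reduce the whole identity to the eigenfunction property established in Lemma \ref{lem:operator-G-on-exp-z-phi}. Concretely, I would start by writing, for $\varphi$ in a suitable neighborhood of zero in $\mathcal{D}_{\mathbb{C}}$ where $\mathrm{e}_{\pi_{\sigma}^{\beta}}(\varphi;\cdot)$ belongs to $(\mathcal{N})_{\pi_{\sigma}^{\beta}}^{1}$,
\[
S_{\pi_{\sigma}^{\beta}}\!\bigl(Q_{n}^{\sigma,\beta,\alpha}(\Phi^{(n)})\bigr)(\varphi)=\langle\!\langle G(\Phi^{(n)})^{*}\mathbf{1},\mathrm{e}_{\pi_{\sigma}^{\beta}}(\varphi;\cdot)\rangle\!\rangle_{\pi_{\sigma}^{\beta}}=\langle\!\langle\mathbf{1},G(\Phi^{(n)})\mathrm{e}_{\pi_{\sigma}^{\beta}}(\varphi;\cdot)\rangle\!\rangle_{\pi_{\sigma}^{\beta}},
\]
simply using the definition $Q_{n}^{\sigma,\beta,\alpha}(\Phi^{(n)})=G(\Phi^{(n)})^{*}\mathbf{1}$ from \eqref{eq:generalized-function-inf-dim} together with the definition of the dual operator.

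Next I would exploit the fact that $\mathrm{e}_{\pi_{\sigma}^{\beta}}(\varphi;w)=\exp(\langle w,\varphi\rangle)/l_{\pi_{\sigma}^{\beta}}(\varphi)$, where the denominator is a scalar depending only on $\varphi$. Since $G(\Phi^{(n)})$ acts in the $w$-variable and is linear, I can pull out $l_{\pi_{\sigma}^{\beta}}(\varphi)^{-1}$ and apply Lemma \ref{lem:operator-G-on-exp-z-phi} to obtain
\[
G(\Phi^{(n)})\mathrm{e}_{\pi_{\sigma}^{\beta}}(\varphi;\cdot)=\langle\Phi^{(n)},g_{\alpha}(\varphi)^{\otimes n}\rangle\,\mathrm{e}_{\pi_{\sigma}^{\beta}}(\varphi;\cdot),
\]
so the scalar factor $\langle\Phi^{(n)},g_{\alpha}(\varphi)^{\otimes n}\rangle$ pulls out of the dual pairing.

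Finally, I would compute $\langle\!\langle\mathbf{1},\mathrm{e}_{\pi_{\sigma}^{\beta}}(\varphi;\cdot)\rangle\!\rangle_{\pi_{\sigma}^{\beta}}=\mathbb{E}_{\pi_{\sigma}^{\beta}}(\mathrm{e}_{\pi_{\sigma}^{\beta}}(\varphi;\cdot))=l_{\pi_{\sigma}^{\beta}}(\varphi)/l_{\pi_{\sigma}^{\beta}}(\varphi)=1$, which yields the claim. The only non-routine point is justifying the chain of pairings rigorously: we need $\mathrm{e}_{\pi_{\sigma}^{\beta}}(\varphi;\cdot)$ to be a legitimate test function and $G(\Phi^{(n)})$ to act on it coherently. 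This is addressed by restricting $\varphi$ to $\mathcal{V}_{\alpha}$ and to the neighborhood of zero where $\mathrm{e}_{\pi_{\sigma}^{\beta}}(\varphi;\cdot)\in(\mathcal{N})_{\pi_{\sigma}^{\beta}}^{1}$ (cf.\ Example \ref{exa:normalized-exp-as-test-function}), using the expansion \eqref{eq:Wick-Appell-inf-dim} term-by-term so that Lemma \ref{lem:operator-G-on-exp-z-phi} applies to each monomial; the convergence then transfers to the full series by continuity of $G(\Phi^{(n)})$ on $\mathcal{P}(\mathcal{D}')$ and density. I expect this convergence/continuity bookkeeping to be the main technical obstacle, while the algebraic core of the proof is a one-line consequence of the eigenfunction property already in hand.
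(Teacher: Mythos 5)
Your proposal is correct and follows essentially the same route as the paper: unfold $Q_{n}^{\sigma,\beta,\alpha}(\Phi^{(n)})=G(\Phi^{(n)})^{*}\mathbf{1}$, dualize, apply the eigenfunction property of Lemma \ref{lem:operator-G-on-exp-z-phi} to $\exp\langle\cdot,\varphi\rangle$, and use that the normalized exponential integrates to $1$. The paper's proof is exactly this computation, written with the integral over $\mathcal{D}'$ made explicit.
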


\begin{proof}
Using Lemma \ref{lem:operator-G-on-exp-z-phi}, the $S_{\pi_{\sigma}^{\beta}}$-transform
of $Q_{n}^{\sigma,\beta,\alpha}(\Phi^{(n)})$ is given by 
\begin{align*}
S_{\pi_{\sigma}^{\beta}}(Q_{n}^{\sigma,\beta,\alpha}(\Phi^{(n)}))(\varphi) & =\langle\!\langle G(\Phi^{(n)})^{*}\boldsymbol{1},\mathrm{e}_{\pi_{\sigma}^{\beta}}(\varphi,\cdot)\rangle\!\rangle_{\pi_{\sigma}^{\beta}}\\
 & =\langle\!\langle\boldsymbol{1},G(\Phi^{(n)})\mathrm{e}_{\pi_{\sigma}^{\beta}}(\varphi,\cdot)\rangle\!\rangle_{\pi_{\sigma}^{\beta}}\\
 & ={\displaystyle \frac{1}{l_{\pi_{\sigma}^{\beta}}(\varphi)}\int_{\mathcal{D}'}G(\Phi^{(n)})}(\exp\langle z,\varphi\rangle)\,\mathrm{d}\pi_{\sigma}^{\beta}(z)\\
 & ={\displaystyle \frac{\langle\Phi^{(n)},g_{\alpha}(\varphi)^{\otimes n}\rangle}{l_{\pi_{\sigma}^{\beta}}(\varphi)}\int_{\mathcal{D}'}}\exp\langle z,\varphi\rangle\,\mathrm{d}\pi_{\sigma}^{\beta}(z)\\
 & =\langle\Phi^{(n)},g_{\alpha}(\varphi)^{\otimes n}\rangle.\qedhere
\end{align*}
\end{proof}
Now using the above result, we provide an alternative proof of Theorem
\ref{thm:biorthogonal-property-inf-dim}.
\begin{proof}[Proof of Theorem \ref{thm:biorthogonal-property-inf-dim} \emph{(}Alternative\emph{)}]
The $S_{\pi_{\sigma}^{\beta}}$-transform of $Q_{n}^{\sigma,\beta,\alpha}(\Phi^{(n)})$
at $\alpha(\varphi)$ is given by
\begin{align*}
S_{\pi_{\sigma}^{\beta}}(Q_{n}^{\sigma,\beta,\alpha}(\Phi^{(n)}))(\alpha(\varphi)) & =\langle\!\langle Q_{n}^{\sigma,\beta,\alpha}(\Phi^{(n)}),\mathrm{e}_{\pi_{\sigma}^{\beta}}(\alpha(\varphi),\cdot)\rangle\!\rangle_{\pi_{\sigma}^{\beta}}\\
 & =\sum_{m=0}^{\infty}\frac{1}{m!}\langle\!\langle Q_{n}^{\sigma,\beta,\alpha}(\Phi^{(n)}),\langle C_{m}^{\sigma,\beta},\varphi^{\otimes m}\rangle\rangle\!\rangle_{\pi_{\sigma}^{\beta}}.
\end{align*}
By Theorem \ref{thm:generalized-function-S-transform} with $\varphi$
replaced by $\alpha(\varphi)$ we obtain
\[
S_{\pi_{\sigma}^{\beta}}(Q_{n}^{\sigma,\beta,\alpha}(\Phi^{(n)}))(\alpha(\varphi))=\langle\Phi^{(m)},\varphi^{\otimes m}\rangle.
\]
The result follows by a comparison of coefficients and the polarization
identity.
\end{proof}

\subsubsection*{Acknowledgments}

This work was partially supported by the Center for Research in Mathematics
and Applications (CIMA-UMa) related with the Statistics, Stochastic
Processes and Applications (SSPA) group, through the grant UIDB/MAT/04674/2020
of FCT-Funda{\c c\~a}o para a Ci{\^e}ncia e a Tecnologia, Portugal
and by the Complex systems group of the Premiere Research Institute
of Science and Mathematics (PRISM), MSU-Iligan Institute of Technology.
The financial support of the Department of Science and Technology
-- Accelerated Science and Technology Human Resource Development
Program (DOST-ASTHRDP) of the Philippines under the Research Enrichment
(Sandwich) Program is gratefully acknowledged. Also, special thanks
to Prof. Ludwig Streit for recommending relevant reading resources.

\end{document}